\theoremstyle{plain}
\newtheorem{theorem}{Theorem}[section]
\newtheorem{lemma}[theorem]{Lemma}
\newtheorem{proposition}[theorem]{Proposition}
\theoremstyle{definition}
\newtheorem{definition}[theorem]{Definition}
\newtheorem{example}[theorem]{Example}
\newcommand{\bbalpha}{{\mathlarger{\mathlarger{\bm{\alpha}}}}}
\newcommand{\bblet}[1]{\mathlarger{\mathlarger{\bm{#1}}}}
\newcommand{\blet}[1]{\mathlarger{\bm{#1}}}
\newcommand{\blue}[1]{\textcolor{black}{#1}}
\def\fn{Figure} %
\begin{document}

\title{Uniform One-Dimensional Fragment over Ordered Structures}

\author{Jonne Iso-Tuisku\footnote{University of Tampere, Finland}, Antti Kuusisto\footnote{Tampere
University of Technology, Finland, and University of Bremen, Germany}}

\date{}

\maketitle

\begin{abstract}
\noindent
The uniform one-dimensional fragment $\mathrm{U}_1$ is a recently introduced extension of the two-variable fragment $\mathrm{FO}^2$. 
The logic $\mathrm{U}_1$ enables the use of relation symbols of all arities 
and thereby extends the scope of applications of $\mathrm{FO^2}$. 
In this article we show that the satisfiability 
and finite satisfiability problems of $\mathrm{U}_1$ over linearly ordered models are \textsc{NExpTime}-complete. 
The corresponding problems for $\mathrm{FO}^2$ are likewise \textsc{NExpTime}-complete, 
so the transition from $\mathrm{FO}^2$ to $\mathrm{U}_1$ in the ordered realm causes no increase in complexity. 
To contrast our results, we also establish that $\mathrm{U}_1$ with an unrestricted use of two built-in linear orders is undecidable.
\end{abstract}

\section{Introduction}

Two-variable logic $\mathrm{FO}^2$ and the
guarded fragment $\mathrm{GF}$ are currently probably the most
widely studied sublogics of first-order logic. 
Two-variable logic was shown \textsc{NExpTime}-complete in
\cite{DBLP:journals/bsl/GradelKV97} and the guarded fragment 
\textsc{2NExpTime}-complete in \cite{GradelR99}.
The guarded-fragment was extended in \cite{barany} to 
the guarded negation fragment $\mathrm{GNFO}$, and it was likewise
shown to be \textsc{2NExpTime}-complete in the same article. The recent paper
\cite{DBLP:conf/aiml/HellaK14} introduced the \emph{uniform one-dimensional
fragment} $\mathrm{U}_1$ as an extension of $\mathrm{FO}^2$,
and $\mathrm{U}_1$ was shown \textsc{NExpTime}-complete in
\cite{DBLP:conf/mfcs/KieronskiK14}. 
The uniform one-dimensional fragment extends $\mathrm{FO}^2$ in a
canonical way to contexts with relation symbols of all arities.
Indeed, $\mathrm{FO}^2$ does not cope well with higher-arity relations,
and this \emph{limits the scope of related research} in relation to 
potential  applications. In database theory contexts, for example,
this limitation can be crucial. The idea of $\mathrm{U}_1$ is
based on the notions of \emph{one-dimensionality} and \emph{uniformity}.
One-dimensionality means that quantification is limited to blocks of
existential (universal) quantifiers that leave at most one variable free. Uniformity
asserts that a Boolean combination of higher arity (i.e., at least binary) atoms
$R(x_1,...\, ,x_k)$ and $S(y_1,...\, ,y_n)$ is allowed only if 
$\{x_1,...\, ,x_k\} = \{y_1,...\, ,y_n\}$; see Section \ref{preliminariessection}
for the formal definition.
The article \cite{DBLP:conf/aiml/HellaK14} shows that if
either of these two restrictions---one-dimensionality or
uniformity---is lifted in a canonically minimal way, the resulting 
logic is undecidable.
While $\mathrm{U}_1$ has the same 
complexity as $\mathrm{FO}^2$, it was established in 
\cite{DBLP:conf/aiml/HellaK14,DBLP:conf/dlog/Kuusisto16}
that it is incomparable in expressivity with both $\mathrm{GNFO}$
and two-variable logic with counting $\mathrm{FOC}^2$.
Another interesting result on expressivity was
obtained in \cite{DBLP:conf/dlog/Kuusisto16},
which showed that if the uniformity condition is applied also to the equality symbols in
addition to other relation symbols, the resulting logic is
equi-expressive with $\mathrm{FO}^2$ over models 
with at most binary relations. While standard $\mathrm{U}_1$ is
more expressive than $\mathrm{FO}^2$ even over empty vocabularies,
this equi-expressivity result can be taken as an argument for
canonicity of $\mathrm{U}_1$ as an extension of $\mathrm{FO}^2$.
Recent results on $\mathrm{U}_1$ include, e.g., \cite{KuusistoL18},
which shows that the symmetric model counting problem for formulae of $\mathrm{U}_1$ is
computable in polynomial time. See also \cite{DBLP:conf/csl/KieronskiK15}, which gives an Ehrenfeucht-Fra\"{i}ss\'{e}
game characterization of $\mathrm{U}_1$.
Research on two variable logics has been very active in recent years,
see, e.g., \cite{james16,mikolaj11,Charatonik016a,
DBLP:conf/csl/Kieronski16,amaldevi,lidia14,zeume}.
The focus has been especially on questions related to
built-in relations and operators that increase the
expressivity of the base language.
%
%
%
%
%
%
%
%
%
%
The significance of $\mathrm{FO}^2$ from the point of view of
applications is at least partially explained by its close links to XML 
\blue{(in this connection, see e.g. \cite{DBLP:journals/jacm/BojanczykMSS09})}
as
well as the fact that typical systems of modal logic translate
into $\mathrm{FO}^2$. The  link with modal logic makes
investigations on two-variable logic relevant to various different
fields from verification and temporal logic to 
knowledge representation, description logics, and
even distributed computing. For more or less recent works
relating to links between logic and
distributed computing, see, e.g., \cite{distco15, gand2014, Kuusisto13, 
Reiter15, Kuu8}
In this article we obtain the first results on $\mathrm{U}_1$ 
over models with a built-in linear order \blue{(abbr. $\mathrm{U}_{1}(<)$)}. 
We prove three related complexity results: we show
that $\mathrm{U}_1(<)$ is \textsc{NExpTime}-complete
over linearly ordered models, well-ordered models and finite linearly ordered models.
\blue{In comparison with the complexity of $\mathrm{FOC}^2$ with a linear order,
which was shown to be \textsc{VAS}-complete in \cite{Charatonik016a},
$\mathrm{U}_1$ with a linear order is more efficient.}
Furthermore, we show undecidability of $\mathrm{U}_1$ with two
linear orders that can be used freely, i.e., the uniformity condition
does not apply to the linear orders. This result contrasts 
with the result of Zeume and Harwath \cite{zeume} showing that $\mathrm{FO}^2$ in the finite case
with two linear orders is decidable and in fact in \textsc{2NExpTime}.
%
%
Also, $\mathrm{U}_1$ with two freely usable built-in equivalence
relations has been shown \textsc{2NExpTime}-complete in
\cite{DBLP:conf/csl/KieronskiK15}. 
We note that $\mathrm{FO}^2$ with a single linear order was
shown $\textsc{NExpTime}$-complete by
Otto already in \cite{DBLP:journals/jsyml/Otto01}.
The proofs below use fresh methods 
together with notions from \cite{DBLP:conf/mfcs/KieronskiK14} 
and \cite{DBLP:journals/jsyml/Otto01}.
Our objectives in this article are two-fold. Firstly, we 
provide the first results on $\mathrm{U}_1$ over models
with a built-in linear order. Secondly, we wish to promote $\mathrm{U}_1$ as a
potential framework for \emph{expanding the scope} of the active research programme on
two-variable logics to the \emph{context of higher arity relations}. From the
point of view of applications, inter alia (and especially) database theory, the
restriction to binary relations can indeed be undesirable. 
The article 
\cite{DBLP:conf/dlog/Kuusisto16} contains a brief survey on $\mathrm{U}_1$
and argues how $\mathrm{U}_1$ can serve as a
framework for building (expressive) $n$-ary description logics.

\blue{The structure of the paper is the following.
The next section, Section \ref{preliminariessection}, introduces general concepts and notations used throughout this paper.
Section \ref{Analysing} also introduces concepts, but they are 
very specific, technical, and mostly used in Section \ref{reducing}.
It is recommended to use
Section \ref{Analysing} as a reference while reading Section \ref{reducing}
which presents the main results of this paper.
Section \ref{undecidableextensions}  presents the undecidability result, forming a contrast with the main results.
The results of this paper have appeared in the thesis \cite{JITthesis}}.

\section{Preliminaries} \label{preliminariessection}

\blue{First we introduce general concepts used in this paper.
Then we introduce the uniform one-dimensional fragment  $\mathrm{U}_1$ of first-order logic
and related concepts.}

We let $\mathbb{Z}_+$ denote the set of positive integers.
If $f$ is a function with a domain $S$,
we define $\mathit{img}(f) := \{\, f(s)\, |\, s\in S\,\}$.
An \emph{ordered set} is a structure $(A,<)$ where $A$ is a set
and $<$ a linear order on $A$.
We call a subset $I$ of $A$ an \textit{interval}
if for all $a,c \in I$ and all $b \in A$,
it holds that if $a < b < c$, then $b \in I$.
A \emph{permutation of a tuple} $(u_1,...\, , u_k)$ is a
tuple $(u_{f(1)},...\, , u_{f(k)})$ for some bijection $f:\{1,...\, , k\}
\rightarrow \{1,...\, ,k\}$. A \emph{trivial tuple} is a tuple $(u_1,...\, , u_k)$
such that $u_i = u_j$ for all $i,j\in\{1,...\, , k\}$.
We let $\mathrm{VAR}$ denote the 
set $\{v_1, v_2,\, ...\, \}$ of first-order variable symbols.
We mostly use metavariables $x,y,z,x_1,y_1,z_1,$ etc.,  to
denote the variables in $\mathrm{VAR}$. Note that
for example the metavariables $x$ and $y$ may denote the
same variable symbol $v_i$, while $v_i$ and $v_j$ for $i\not= j$
are always different symbols.
Let $R$ be a $k$-ary relation symbol.
An atomic formula $Rx_{1}...x_{k}$ is
called an $X$-atom if $X = \{x_{1},...\, , x_{k}\}$.
For example, if $x,y,z$ are distinct variables,
then $Syx$ and $Rxyxxy$ are $\{x,y\}$-atoms while $Px$ and $Txzy$
are not. $Txyz$ and $Syyxz$ are $\{x,y,z\}$-atoms.
For technical reasons, atoms $x=y$ with an 
equality symbol are \emph{not} $\{x,y\}$-atoms.
%
%
%
%
%
%
%
%
%
%
%
%
%
%
%
%
%
%

%
%
%
Let $\tau$ be a relational vocabulary.
A $k$-ary $\tau$-atom is an atomic $\tau$-formula
that mentions exactly $k$ variables:
for example, if $x,y,z$ are distinct variables and $R,T\in\tau$
relation symbols with arities $5$ and $3$, respectively, then
the atoms $Txxy$ and $x= y$ are binary $\tau$-atoms and $Rxxyzx$ and $Txyz$
ternary $\tau$-atoms. If $P,S\in\tau$ are relation symbols of
arities $1$ and $2$, respectively, then $Px$ and $x=x$ are unary $\tau$-atoms
and $Sxy$ a binary $\tau$-atom.
Let $\tau_m$ denote a countably infinite relational vocabulary
in which every relation symbol is of the arity $m$.
Let $\mathcal{V}$ be \textit{a complete relational vocabulary},
that is $\mathcal{V} = \bigcup_{m \in \mathbb{Z_+}}\tau_m$.
In this paper we consider models and logics with relation symbols only;
function and constant symbols will not be considered. (The identity
symbol is considered a logical constant and is therefore not a relation symbol.)
We denote models by $\mathfrak{A}$, $\mathfrak{B}$, et cetera. 
The domain of these models is then denoted by $A$ and $B$, respectively.
If $\tau$ is a vocabulary, then a $\tau$-model interprets all the
relation symbols in $\tau$ and no other relation symbols.
A $\tau$-formula is a formula 
whose relation symbols are contained in $\tau$.
If $\mathfrak{A}$ is a $\tau$-model
and $\mathfrak{B}$ a $\tau'$-model such that $\tau\subseteq\tau'$
and $\mathfrak{A} = \mathfrak{B}\upharpoonright\tau$,
then $\mathfrak{B}$ is an \emph{expansion} of $\mathfrak{A}$ and $\mathfrak{A}$ is
the \emph{$\tau$-reduct} of $\mathfrak{B}$.
The notion of a \emph{substructure} is
defined in the usual way, and if $\mathfrak{A}$ is a substructure of $\mathfrak{B}$ 
\blue{(written: $\mathfrak{A} \subseteq \mathfrak{B}$)},
then $\mathfrak{B}$ is an \emph{extension} of $\mathfrak{A}$.
Consider a vocabulary $\tau \subseteq \mathcal{V}$.
The set of $\tau$-formulae of the \emph{equality-free 
uniform one-dimensional fragment} $\mathrm{U}_1(\mathit{no}\hspace{-1mm}=)$ is 
the smallest set $\mathcal{F}$ such that the following conditions hold.
\begin{enumerate}
  \item Every unary $\tau$-atom is in $\mathcal{F}$.
  \item If $\varphi \in \mathcal{F}$, then $\neg \varphi \in \mathcal{F}$.  
  \item If $\varphi, \psi \in \mathcal{F}$, then $(\varphi \wedge \psi) \in \mathcal{F}$.
  \item
  Let $X':=\{x_0,...,x_k\} \subseteq \mathrm{VAR}$ and $X\subseteq X'$.
  Let $\varphi$ be a Boolean combination of $X$-atoms and 
  formulae in $\mathcal{F}$ whose free variables (if any) are in the set $X'$.
  Then the formulae $\exists x_1...\exists x_k\varphi$ and $\exists x_0...\exists x_k\varphi$
  are in $\mathcal{F}$.
\end{enumerate} 
For example,
$\exists x \exists y \exists z
(\neg Rxyzxy \wedge \neg Tyxz \wedge Px \wedge Qy)$ and
$\exists x \forall y \forall z (\\ \neg Sxy\rightarrow \exists u\exists v Tuvz)$
are formulae of $\mathrm{U}_1(\mathit{no}\hspace{-1mm}=)$.
If $\psi(y)$ is a formula of $\mathrm{U}_1(\mathit{no}\hspace{-1mm}=)$,
then $\exists y\exists z(Txyz\wedge Rzxyzz\wedge\psi(y))$ is as well.
However, the formula $\exists x \exists y\exists z(Sxy \vee Sxz)$ is
\emph{not} a formula of $\mathrm{U}_1(\mathit{no}\hspace{-1mm}=)$
because $\{x,y\}\not = \{x,z\}$. The formula is
said to violate the uniformity condition, i.e., the
syntactic restriction that the relational
atoms of higher arity bind the same set of variables.
The formula $\forall y (Py\wedge \exists xTxyz)$ is
not a formula of $\mathrm{U}_1(\mathit{no}\hspace{-1mm}=)$
because it violates one-dimensionality, as $\exists xTxyz$
has two free variables.
Perhaps the simplest formula of $\mathrm{U}_1(\mathit{no}\hspace{-1mm}=)$
that can be expressed in neither two-variable logic with
counting quantifiers $\mathrm{FOC}^2$
nor in the guarded negation fragment $\mathrm{GNFO}$ is
the formula $\exists x\exists y\exists z\neg Txyz$.
The set of formulae of the \emph{fully
uniform one-dimensional fragment} $\mathrm{FU}_1$ is
obtained from the set of formulae of $\mathrm{U}_1(\mathit{no}\hspace{-1mm}=)$
by allowing the substitution of any binary relation 
symbols in a formula of $\mathrm{U}_1(\mathit{no}\hspace{-1mm}=)$
by the equality symbol $=$. If restricted to vocabularies
with at most binary symbols, $\mathrm{FU}_1$ is 
exactly as expressive as $\mathrm{FO}^2$ \cite{DBLP:conf/dlog/Kuusisto16}. 
The set of $\tau$-formulae of the
\emph{uniform one-dimensional fragment} $\mathrm{U}_1$ is
the smallest set $\mathcal{F}$ obtained by adding to the four above
clauses that define $\mathrm{U}_1(\mathit{no}\hspace{-1mm}=)$
the following additional clause:
\begin{enumerate}\setcounter{enumi}{4}
\item Every equality atom $x=y$ is in $\mathcal{F}$.
\end{enumerate}
For example $\exists y\exists z( Txyz \wedge Qy\wedge x\not= y)$
as well as the formula $\exists x \exists y\exists z( x\not= y\wedge
y\not= z\wedge z\not = x)$ are $\mathrm{U}_1$-formulae.
The latter formula is an example of a (counting) condition that is
well known to be undefinable in $\mathrm{FO}^2$.
A more interesting example of a condition not
expressible in $\mathrm{FO}^2$ (cf. \cite{DBLP:conf/dlog/Kuusisto16}) is
defined by the $\mathrm{U}_1$- formula $\exists x\forall y
\forall z(Syz\rightarrow (x=y\vee x=z))$,
which expresses that some element is part of every tuple of $S$.
For more examples and background intuitions, see the
survey \cite{DBLP:conf/dlog/Kuusisto16}.
Let $\bar{x}$ be a tuple of variables.
Let $\exists \bar{x} \varphi$ be a $\mathrm{U}_{1}$-formula
which is formed by applying the rule 4 of the syntax above.
Recall the set $X$ used in the formulation.
If $\varphi$ does not contain any relational atom (other than equality)
with at least two distinct variables, we define $L_{\varphi}:=\emptyset$,
and otherwise we define $L_{\varphi}:= X$.
We call the set $L_{\varphi}$ the set of
\textit{live variables} of $\varphi$.
For example, in $\exists y\exists z\exists u(Txyz
\wedge Rxxyyz\wedge x= u\wedge Q(u))$ the
set of live variables is $\{x,y,z\}$.
A quantifier-free subformula of a $\mathrm{U}_1$-formula is
called a $\mathrm{U}_1$-matrix.
%
%
%
%
%
%
Let $\psi(x_1, ...\, ,x_k)$ be a $\mathrm{U}_{1}$-matrix with exactly the distinct
variables $x_1,...\, , x_k$.
Let $\mathfrak{A}$ be a model with domain $A$, 
and let $a_1,...\, , \\ a_k \in A$ be (not necessarily distinct) elements.
Let $T$ be the smallest subset of $\{a_1,...,a_k\}$
such that for every $x_i \in L_{\psi}$, we have $a_i \in T$,
\blue{i.e. $T = \{a_i \mid x_i \in L_{\psi}\}$.}
We denote $T$ by $live\big(\psi(x_1,...,x_k)[a_1,...,a_k]\big)$.
For example, if 
$\psi(v_1,v_2,v_3,v_4) := (Rv_2v_3v_2\wedge Pv_4\wedge v_1=v_2)$,
then $\\ live\big(\psi(v_1,v_2,v_3,v_4)[a,b,c,b]\big) = \{b,c\}.$
We shall shorten the notation 
$$live\big(\psi(x_1,...,x_k)[a_1,\ldots,a_k]\big)$$
to $live\big(\psi[a_1,...,a_k]\big)$ when there is no
possibility of confusion.
%
%
%
%
%
%
%
%
%
%

%
%
%
%
A $\mathrm{U}_1$-formula $\varphi$ is in \textit{generalized Scott normal form}, if
\begin{align*}
\varphi = &\bigwedge \limits_{1\leq i \leq m_{\exists}} \forall x \exists y_1
\ldots \exists y_{k_i}
\varphi_{i}^{\exists}(x,y_1,\ldots,y_{k_i})
\; \wedge \\
&\bigwedge \limits_{1\leq i \leq m_{\forall}} \forall x_1 \ldots \forall x_{l_i}
\varphi_{i}^{\forall}(x_1,\ldots,x_{l_i}),
\end{align*}
where the formulae $\varphi_{i}^{\exists}$ and $\varphi_{i}^{\forall}$
are \emph{$\mathrm{U}_1$-matrices}.
Henceforth by a normal form we always mean generalized Scott normal form.
The formulae $$\forall x \exists y_1... \exists y_{k_i} 
\varphi_{i}^{\exists}(x,y_1,...,y_{k_i})$$ are 
called \emph{existential conjuncts} and the
formulae $$\forall x_1...\forall x_{l_i}
\varphi_{i}^{\forall}(x_1,...,x_{l_i})$$
\emph{universal conjuncts} of $\varphi$.
The quantifier-free part of an existential (universal) conjunct is called an
\emph{existential (universal) matrix}.
We often do not properly differentiate between
existential conjuncts and existential matrices when
there is no risk of confusion. The same holds for universal
matrices and universal conjuncts.
The \emph{width} of $\varphi$ is 
the maximum number of the set $\{ k_{i}+1 \}_{1\leq i
\leq m_{\exists} }
\cup \{ l_{i} \}_{1\leq i \leq m_{\forall} }$.
\blue{We shall usually denote the width of $\varphi$ by $n$ and we assume w.l.o.g. that $n \geq 2$.}
%
%
%
%
%
%
%
%
%
\begin{proposition}[\cite{DBLP:conf/mfcs/KieronskiK14}]\label{PreliminariesNormalForm}
Every $\mathrm{U}_{1}$-formula $\varphi$ can be translated in polynomial time
to a $\mathrm{U}_{1}$-formula $\varphi'$ in
generalized Scott normal form that is equisatisfiable
with $\varphi$ in the following sense. If $\mathfrak{A}\models\varphi$,
then $\mathfrak{A}^*\models\varphi'$ for some expansion $\mathfrak{A}^*$
of $\mathfrak{A}$, and vice versa, if $\mathfrak{B}\models\varphi'$, then
$\mathfrak{B}'\models\varphi$ for some reduct $\mathfrak{B}'$ of $\mathfrak{B}$.
The vocabulary of $\varphi'$ expands the vocabulary of $\varphi$ with
fresh unary relation symbols only.
\end{proposition}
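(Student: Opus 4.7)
The plan is to adapt the classical renaming technique for Scott normal form to the one-dimensional setting. Assume w.l.o.g.\ that $\varphi$ is a sentence; if it has free variables, close them existentially, which stays in $\mathrm{U}_1$ via rule 4 (taking $X = \emptyset$ for the Boolean combination of free-variable atoms). First rewrite $\varphi$ in negation normal form by pushing negations down to the atomic level. This keeps us inside $\mathrm{U}_1$ because negation and conjunction are closed, $\neg\exists$ becomes a universal block $\forall y_1 \ldots \forall y_k$, and the Boolean-combination side-condition in rule 4 is unaffected by pushing $\neg$ inwards.

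Next, process the formula bottom-up. Whenever an innermost proper quantified subformula
\[
\chi \;=\; Q y_1 \ldots Q y_k\, \psi(x, y_1, \ldots, y_k)
\]
is encountered --- where $\psi$ is already quantifier-free and $\chi$ has at most one free variable $x$ by one-dimensionality --- introduce a fresh unary symbol $P_\chi$, replace the occurrence of $\chi$ in the enclosing formula by the atom $P_\chi(x)$, and conjoin $\forall x\,(P_\chi(x) \leftrightarrow \chi)$ to a growing top-level conjunction. When $Q = \exists$, this equivalence splits into the existential conjunct $\forall x\,(P_\chi(x) \to \exists y_1 \ldots \exists y_k\, \psi)$ and the universal conjunct $\forall x \forall y_1 \ldots \forall y_k\,(\psi \to P_\chi(x))$; the dual splitting handles $Q = \forall$. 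Closed subformulae are treated analogously with a vacuous dummy variable so that the axioms still fit the normal-form shape. Iterating until no nested quantification remains yields the desired conjunction of existential and universal conjuncts, with the required $\mathrm{U}_1$-matrices $\varphi_i^{\exists}$ and $\varphi_i^{\forall}$ emerging as the Boolean combinations present in the split axioms.

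The number of renaming steps is bounded by the number of quantified subformulae of $\varphi$, and each step adds only constant-size scaffolding around an existing matrix, so the whole transformation runs in polynomial time and introduces only fresh unary symbols, matching the two conditions of the proposition. The main subtle point is to verify that every intermediate formula still belongs to $\mathrm{U}_1$. Since $P_\chi$ is unary, it is inert with respect to the uniformity condition (which constrains only higher-arity atoms appearing in the same Boolean combination), and one-dimensionality is preserved because $P_\chi(x)$ has the same lone free variable as the subformula it replaces. Equisatisfiability in the precise sense stated then follows in the standard way: any $\mathfrak{A} \models \varphi$ expands to a model of $\varphi'$ by interpreting each $P_\chi$ via its defining equivalence, and conversely the reduct of any $\mathfrak{B} \models \varphi'$ to the original vocabulary satisfies $\varphi$, as an induction on the renaming order shows that $P_\chi(x)$ agrees pointwise with the subformula it replaced.
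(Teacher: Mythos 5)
This proposition is quoted from \cite{DBLP:conf/mfcs/KieronskiK14} and the paper gives no proof of its own, so there is nothing internal to compare against; your reconstruction is the standard Scott-style renaming argument (fresh unary predicates for innermost quantified one-free-variable subformulae, biconditional definitions split into existential and universal conjuncts), which is precisely how the result is established in the cited source. The argument is correct, including the key observation that unary predicates are inert with respect to uniformity and preserve one-dimensionality.
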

Let $\mathfrak{A}$ be a model satisfying a normal
form sentence $\varphi$ of $\mathrm{U}_1$.
Let $$a,a_1,...\, ,a_{k_{i}} \in A,$$
and let $\forall x \exists y_1... \exists y_{k_i}
\varphi_{i}^{\exists}(x,y_1,...,y_{k_i})$ be an existential conjunct of $\varphi$
such that $\mathfrak{A} \models
\varphi_{i}^{\exists}(a,a_1,...,a_{k_{i}})$.
Then we define $\mathfrak{A}_{a,\varphi_{i}^{\exists}} :=
\mathfrak{A}\upharpoonright\nolinebreak\{a,a_1,\ldots,a_{k_{i}} \}$
and we call $\mathfrak{A}_{a,\varphi_{i}^{\exists}}$ a \textit{witness structure}
for the pair $(a,\varphi_{i}^{\exists})$.
The elements of the witness structure
are called \textit{witnesses}.
In addition, we define
$\bar{\mathfrak{A}}_{a,\varphi_{i}^{\exists}} :=
\mathfrak{A}_{a,\varphi_{i}^{\exists}} \upharpoonright 
live(\varphi_{i}^{\exists}[a,a_1,\ldots,a_{i_{k}}])$
and we call it the $\textit{live part}$ 
of $\mathfrak{A}_{a,\varphi_{i}^{\exists}}$.
If the live part 
$\bar{\mathfrak{A}}_{a,\varphi_{i}^{\exists}} $ 
does not contain $a$, then it is called \textit{free}.   
The remaining part 
$\mathfrak{A}_{a,\varphi_{i}^{\exists}} \upharpoonright
(A_{a,\varphi_{i}^{\exists}} \setminus \bar{A}_{a,\varphi_{i}^{\exists}})$ 
of $\mathfrak{A}_{a,\varphi_{i}^{\exists}}$
is called the \textit{dead part} of the witness structure.
In other words, the witness structure consists of the two parts:
the live part and the dead part.

\blue{The next two subsections \ref{structureclassessection} and \ref{typesandtables} introduce concepts
which abstract a treatment of relational structures in a convenient way. 
}

\subsection{Structure classes}\label{structureclassessection}
Fix a binary relation $<$. 
Throughout the article, we
let $\mathcal{O}$ denote the class of all structures $\mathfrak{A}$
such that $\mathfrak{A}$ is a $\tau$-structure for some $\tau\subseteq\mathcal{V}$
with $<\, \in\tau$, and the symbol $<$ is
interpreted as a \emph{linear order} over $A$. 
\blue{
Intuitively, $<$ can be regarded as an interface 
``implemented'' by the structures in $\mathcal{O}$.\footnote{Cf. object-oriented programming theory}
This abstraction enables us to study the structures in $\mathcal{O}$ without 
interfering with other relational information which the structures may have.}
(Note that thus the vocabulary is not required to be the same for
all models in $\mathcal{O}$.)
The class $\mathcal{WO}$ is defined similarly,
but this time $<$ is interpreted as a well-ordering of $A$, i.e., a
linear order over $A$ such that each nonempty subset of $A$ has a
least element w.r.t. $<$.
Similarly, $\mathcal{O}_{fin}$ is the subclass of $\mathcal{O}$
where every model is finite.

Consider a class $\mathcal{K}\in\{\mathcal{O},\mathcal{WO},\mathcal{O}_{fin}\}$.
%
%
%
%
The \emph{satisfiability problem of\, $\mathrm{U}_1$ over $\mathcal{K}$}, 
\blue{denoted by $sat_{\mathcal{K}}(\mathrm{U}_1)$},
asks, given a formula of $\mathrm{U}_1$,
whether $\varphi$ has a model in $\mathcal{K}$.\footnote{
By the complexity of a logic $\mathcal{L}$ we mean the complexity of the satisfiability problem of the logic $\mathcal{L}$.
}
The set of relation symbols in the input formula $\varphi$ is not limited in any way.
\blue{The standard (general case) satisfiability problem of $\mathrm{U}_{1}$ is denoted by 
$sat(\mathrm{U}_1)$. }

If $R_1$ and $R_2$ are binary relation
symbols, we let $\mathrm{U}_1[R_1,R_2]$ be the
extension of $\mathrm{U}_1$ such that
$\varphi$ is a formula of $\mathrm{U}_1[R_1,R_2]$ iff it can be obtained
from some formula of $\mathrm{U}_1$ by replacing any number of
equality symbols with $R_1$ or $R_2$; 
for example $\forall x\forall y\forall z ((R_1xy\wedge R_1yz)\rightarrow R_1xz)$
is obtained from the $\mathrm{U}_1$-formula 
$\forall x\forall y\forall z ((x=y\wedge y=z)\rightarrow x=z)$ this way.
Such extensions of $\mathrm{U}_{1}$
are said to allow non-uniform use of $R_1$ and $R_2$ in formulae.
%
%
%
%
At the end of this paper we investigate $\mathrm{U}_1[<_1, <_2]$
over structures where $<_1$ and $<_2$
both denote linear orders.
\subsection{Types and tables}\label{typesandtables}
Let $\tau$ be a finite relational vocabulary.
A 1-type (over $\tau$) is a maximally consistent set of unary $\tau$-atoms and negated
unary $\tau$-atoms in the single variable $v_1$.
We denote 1-types by $\alpha$ and the
set of all 1-types over $\tau$ by $\bbalpha_{\tau}$.
If there is no risk of confusion, we may write $\bbalpha$ instead of $\bbalpha_{\tau}$.
The size of $\bbalpha_{\tau}$ is clearly bounded by $2^{|\tau|}$.
We often identify a $1$-type $\alpha$ with the
conjunction of its elements, thereby considering $\alpha(x)$ as
simply a formula in the single variable $x$. (Note that here 
we used $x$ instead of the official variable $v_1$ with
which the $1$-type $\alpha$ was defined.)
Let $\mathfrak{A}$ be a $\tau$-model and $\alpha$ a $1$-type over $\tau$.
The type $\alpha$ is said to be \emph{realized} in $\mathfrak{A}$
if there is some $a \in A$ such that $\mathfrak{A} \models \alpha(a)$.
We say that the point $a$ realizes the $1$-type $\alpha$ in $\mathfrak{A}$
and write $tp_{\mathfrak{A}}(a) = \alpha$. Note that 
every element of $\mathfrak{A}$ realizes exactly one $1$-type over $\tau$.
We let $\bbalpha_{\mathfrak{A}}$ denote the
set of all $1$-types over $\tau$ that
are realized in $\mathfrak{A}$.
It is worth noting that $1$-types do not only involve
unary relations:  for example an atom $Rxxx$ can be part of a $1$-type.

Let $k\geq 2$ be an integer.
A \emph{$k$-table} over $\tau$
is a maximally consistent set of $\{v_1,...\, ,v_k\}$-atoms
and negated $\{v_1,...\, ,v_k\}$-atoms over $\tau$.
\blue{Recall that a $\{v_1,...\, ,v_k\}$-atom must contain exactly all the variables in $\{v_1,...\, ,v_k\}$.
Recall also that $v_1 = v_2$ is not $\{v_1,v_2\}$-atom, and thus 
2-tables do not contain identity atoms or negated identity 
atoms.\footnote{Alternatively, we could define 2-tables such that they always contain $\neg v_1=v_2$
and $\neg v_2=v_1$.}  }
\begin{example}\label{rubbishexample}
Using the metavariables $x,y$ instead of $v_1,v_2$,
the set
$$\{Rxxy,Rxyx,\neg Ryxx, Ryyx,\neg Ryxy, Rxyy,
x<y,\neg \, y<x\}$$
is a $2$-table over $\{R,<,P\}$, where $R$ is a ternary,
$<$ binary and $P$ a unary symbol.
\blue{
The set $\{ \neg Rxxx, Pxx, \neg(x < x), x = x \}$ is a 1-type over the same vocabulary.
}
\end{example}
We denote $k$-tables by $\beta$.
%
%
%
%
%
Similarly to what we did with $1$-types, a $k$-table $\beta$
can be identified 
with the conjunction of its elements, denoted by $\beta(x_1,...\, , x_k)$.
If $a_1,...\, , a_k \in A$ are \emph{distinct} elements
such that $\mathfrak{A} \models \beta(a_1,\ldots,a_k)$,
we say that $(a_1,...\, , a_k)$ \emph{realizes} the table $\beta$
and write $tb_{\mathfrak{A}}(a_1,...\, ,a_k) = \beta$.
Every tuple of $k$ distinct elements in the $\tau$-structure $\mathfrak{A}$
realizes exactly one $k$-table $\beta$ over $\tau$.

\blue{
Note that 1-types and $k$-tables encapsulate relational information,
enabling us to construct relational structures in the following way.
Let $m$ be the maximum arity of symbols in $\tau$. 
To fully define a $\tau$-structure $\mathfrak{A}$ over a given domain $A$, 
we first specify a 1-type for each element in $A$.
Then for each subset $B \subseteq A$ for which $2 \leq |B| \leq m$,
we choose an enumeration $(b_1,\ldots,b_{|B|})$ of the elements of $B$ and specify
a $|B|$-table $tb_{\mathfrak{A}}(b_1,\ldots,b_{|B|})$. }

\blue{Lastly we define some notions derived from the concept of 1-type.}
Let $\alpha$ be a $1$-type. We define the formulae 
%
%
%
%
$min_{\alpha}(x) := \alpha(x) \wedge \forall y \big( (\alpha(y) \wedge x \neq y) \rightarrow x < y \big)$ and
$max_{\alpha}(x) := \alpha(x) \wedge \forall y \big( (\alpha(y) \wedge x \neq y) \rightarrow y < x \big)$
%
%
%
%
for later use. An element $a\in A$ is called a
\emph{minimal} (resp., \emph{maximal})
\emph{realization} of $\alpha$ in $\mathfrak{A}$ iff
$\mathfrak{A}\models\mathit{min}_{\alpha}(a)$ (resp.,
$\mathfrak{A}\models\mathit{max}_{\alpha}(a)$).
This definition holds even if $\mathfrak{A}$ interprets $<$ as a 
binary relation that is \emph{not} a linear order; at a certain very clearly
marked stage of the investigations below, the symbol $<$ is 
used over a model $\mathfrak{B}$ where it is not necessarily interpreted as an order
but is instead simply a binary relation.
%
%
%
%
%
%

%
%
%
Let $\varphi$ be a normal form sentence of $\mathrm{U}_1$ over $\tau$
and let $\mathfrak{A}$ be a $\tau$-model.
Let $n$ be the width of $\varphi$.
A $1$-type $\alpha$ over $\tau$ is called \textit{royal} (in $\mathfrak{A}$
and w.r.t. $\varphi$)
if there are at most $n-1$ elements in $A$ realizing $\alpha$.
Elements in $A$ that realize a royal $1$-type are called \textit{kings} (w.r.t. $\varphi$).
Other elements in $A$ are \textit{pawns} (w.r.t. $\varphi$).
If $K_{\mathfrak{A}}$ denotes the set of kings in $\mathfrak{A}$,
then $K_{\mathfrak{A}}$ is bounded by $(n-1)|\bbalpha| = (n-1)2^{|\tau|}$,
where $\bbalpha$ is the set of all 1-types over $\tau$.
\blue{It is important to distinguish kings from pawns in structures.
In $\mathrm{U}_1$ we can express, for example, that there are exactly
three elements satisfying a unary relation $P$.
In symbols: 
\begin{align*}
 \psi :=
  &\exists x_1...\exists x_3(\, \bigwedge \limits_{i \neq j} (x_i \neq x_j) \wedge
  \bigwedge \limits_{i} Px_i \,) \, \wedge \\ 
  &\forall x_1...\forall x_{4}  
  \bigl(\,(\,\bigwedge \limits_{i} Px_i\,) \rightarrow \bigvee \limits_{j\neq k} x_j = x_k \,\bigr).
\end{align*}
Let $\mathfrak{A}$ be a model of $\psi$.  
Cloning or copying kings, and thus extending the structure $\mathfrak{A}$ by new kings, 
would directly contravene the sentence $\psi$.
However, cloning pawns, even in ordered structures, would do no harm. 
This fact is established in the next section.
Note that $\psi$ is in normal form and its width is 4.
}

Now recall the notion of a
witness structure $\mathfrak{A}_{a,\varphi_i^{\exists}}$ in a
model $\mathfrak{A}$ for a pair $(a,\varphi_i^{\exists})$,
where $a\in A$ is an element and $\varphi_i^{\exists}$ an
existential conjunct of a normal form formula. 
Let $\alpha$ be a $1$-type. By a
\emph{witness structure of $(\alpha,\varphi_i^{\exists})$}
we mean a witness structure $\mathfrak{A}_{a',\varphi_i^{\exists}}$
for some pair $(a',\varphi_i^{\exists})$ such that $a'\in A$ realizes $\alpha$.

\section{Analysing ordered structures}\label{Analysing}
\blue{We begin this section by showing the fact that, if we have a structure (particularly an ordered one) and
it has pawns, then these pawns can be cloned as many times as one wishes.
This is the property that will be seen to be very convenient not only in this section, but
also when we are proving the main result in Section \ref{reducing}.}

Let $\varphi$ be a normal form sentence of $\mathrm{U}_1$
and $\tau$ the set of relation symbols in $\varphi$.
Assume that the symbol $<$ occurs in $\varphi$.
Let $r$ be the highest
arity occurring in the symbols in $\tau$, and
let $n$ be the width of $\varphi$.
Denote $min\{r,n\}$ by $m$.
Let $\mathfrak{A}\in\mathcal{O}$ be a $\tau$-model that satisfies $\varphi$.
Let $P\subseteq A$ be the set of all pawns (w.r.t. $\varphi$) of $\mathfrak{A}$.
Thus, for every $p \in P$,
there are at least $n$ elements in $A$ realizing
the $1$-type of $p$.
Let $\mathbf{c}\geq 3$ be an integer.
The \textit{$\mathbf{c}$-cloning extension of $\mathfrak{A}$
with respect to $\varphi$} is a linearly ordered \textit{extension} $\mathfrak{A}'$ of
$\mathfrak{A}$ defined by the following process.

%
\textbf{1. \textit{Defining an ordered domain for $\mathfrak{A}'$}:}
For each $p \in P$, let $Cl(p)$ be\,  a\, 
set $\{p_{0}\}\cup\{p_2,...\, ,p_{\mathbf{c}-1}\}$ of fresh elements.
The domain of $\mathfrak{A}'$ is the set $A' = A \cup \bigcup_{p \in P} Cl(p)$.
For each $p\in P$, the elements $\{p_2,...\, ,p_{\mathbf{c}-1}\}$ are
placed immediately after $p$ while the element $p_0$ is inserted 
immediately before $p$, so $\{p_0\}\cup \{p\}\cup\{p_2,...\, ,p_{\mathbf{c}-1}\}$
becomes an interval with $\mathbf{c}$\nolinebreak\ elements
such that $p_0 < p< p_2 < ... < p_{\mathbf{c}-1}$.
%
%
%
The reason why we place the element $p_0$ before $p$ and the other elements after it will
become clear later on. 
\blue{Note that the ordering of the domain $A'$ is completed in the next stage.}

\textbf{2. \textit{Cloning stage}:}
For every $p \in P$, every $p' \in Cl(p)$,
and every subset $S \subseteq A\setminus \{p\}$ such that
$1\leq |S| \leq m-1$,
we define $tp_{\mathfrak{A'}}(p') := tp_{\mathfrak{A}}(p)$ and
$tb_{\mathfrak{A'}}(p',\bar{s}) := tb_{\mathfrak{A}}(p,\bar{s})$,
where $\bar{s}$ is an $|S|$-tuple that
enumerates the elements of $S$.
\textbf{3. \textit{Completion stage}:}
For each $p \in P$,
let $I_{p}$ denote the interval $\{p_0\}\cup\{p\} \cup \{p_2,...\, ,p_{\mathbf{c}-1}\}$.
We call the intervals $I_{p}$ \textit{clone intervals}
and define $\mathbf{I} := \bigcup_{p \in P} I_{p}$.
Now define $P_2$ to be the 
set of all pairs $(\alpha_1,\alpha_2)$ of $1$-types
such that we have $\mathfrak{A}'\models \alpha_1(u)\wedge\alpha_2(u') \wedge u < u'$
for some elements $u,u'\in A'$. (Note that $\alpha_1$ and $\alpha_2$ are allowed to be the same type.)
Then define a function $t_2:P_2\rightarrow A^2$ that maps every pair $(\alpha_1,\alpha_2)$ in $P_2$
to some pair $(w,w')\in A^2$ such that $tp_{\mathfrak{A}}(w) = \alpha_1$,
$tp_{\mathfrak{A}}(w') = \alpha_2$ and $w<^{\mathfrak{A}}w'$.
We then do the following.
%
%
%


Assume $u,u'\in \mathbf{I}$
such that $u <^{\mathfrak{A}'} u'$.
Let $\alpha_1$ and $\alpha_2$ denote the $1$-types of $u$
and $u'$, respectively, and assume no table has been 
defined over $(u,u')$ or $(u',u)$ in the \textit{cloning stage}.
%
%
%
Then we define
$tb_{\mathfrak{A}'}( u, u' ) := tb_{\mathfrak{A}}( t_2(\alpha_1,\alpha_2))$.
%
%


%
Now recall $m=\mathit{min}\{n,r\}$. Assume $k\in\{3,...\, , m\}$, and let $P_k$ be the 
set of tuples $(\alpha_1,...\, , \alpha_{k})$ of $1$-types (repetitions of types allowed)
such that $\mathfrak{A}'\models \alpha_1(u_1)\wedge...\wedge\, \alpha_k(u_k)$
for some elements $u_1,...\, ,u_k\in A'$ such
that $u_1<^{\mathfrak{A}'} u_2 <^{\mathfrak{A}'}  ...<^{\mathfrak{A}'}u_k$.
Define a function $t_k:P_k\rightarrow A^k$ that maps every
tuple $(\alpha_1,...\, ,\alpha_k)$ in $P_k$
to some tuple $(w_1,...\, , w_k)\in A^k$ of \textit{distinct}
elements such that $tp_{\mathfrak{A}}(w_j) = \alpha_j$ for
each $j\in\{1,...\, , k\}$. Note that the order of the
elements $w_1,...\, ,w_k$ in $\mathfrak{A}$
does not matter, and note also that it is indeed always possible to find $k$ suitable
elements because each pawn in $\mathfrak{A}$ has at
least $n\geq m\geq k$ occurrences in $\mathfrak{A}$. 
Now consider every tuple $(u_1,...\, , u_k)\in {A'}^k$ of elements
such that $u_1<^{\mathfrak{A}'} u_2 <^{\mathfrak{A}'}  ...<^{\mathfrak{A}'}u_k$
and such that we have not defined any table in the \textit{cloning stage}
over $(u_1,...\, , u_k)$ or over any permutation of $(u_1,...\, , u_k)$,
and define $tb_{\mathfrak{A}'}(u_1,...\, , u_k) := tb_{
\mathfrak{A}}(t_k(\alpha_1,...\, ,\alpha_k))$,
where $\alpha_j$ denotes the type of $u_j$ for each $j$.
Do this procedure for each $k\in\{3,...\, ,m\}$.
Finally, \emph{over
tuples with more than $m$ distinct elements}, we
define arbitrarily the interpretations (in $\mathfrak{A}'$) of
relation symbols of arities greater than $m$.
This completes the definition of $\mathfrak{A}'$.
%
%
%
%
\begin{lemma}\label{DecidableLemma1}
Let $\mathfrak{A}\in\mathcal{O}$ be a model
and $\mathfrak{A}'$ its $\mathbf{c}$-cloning
extension w.r.t. $\varphi$.
Now, if $\mathfrak{A}\models\varphi$,
then $\mathfrak{A}'\models\varphi$.
\begin{proof}
It is easy to show that the existential 
conjuncts are dealt with in the \emph{cloning
stage} of the construction of $\mathfrak{A}'$, so we only need to
argue that for all universal conjuncts $\chi$ of $\varphi$, if $\mathfrak{A}\models\chi$,
then $\mathfrak{A}'\models\chi$.
To see that $\mathfrak{A}'$ satisfies the universal conjuncts,
consider such a conjunct $\forall x_1...\forall x_k \psi(x_1,...\, ,x_k)$,
where $\psi(x_1,...\, , x_k)$ is quantifier free,
and let $(a_1,...\, ,a_k)$ be a tuple of elements from $A'$, with possible repetitions.
We must show that $\mathfrak{A}'\models\psi(a_1,...\, ,a_k)$.
Let $\{u_1,...\, ,u_{k'}\} := 
\mathit{live}(\psi(x_1,...\, ,x_k)[a_1,...\, ,a_k])$ and
call $V := \{a_1,...\, ,a_k\}$.
The table $tb_{\mathfrak{A}'}( u_1,...\, ,u_{k'})$ has
been defined either in the \emph{cloning stage} or the
\emph{completion stage} to be $tb_{\mathfrak{A}}( b_1,...\, ,b_{k'})$
for some distinct elements $b_1,...\, ,b_{k'}\in A$.
Furthermore, since $\mathfrak{A}'$ and $\mathfrak{A}$
have exactly the same number of realizations of each royal $1$-type 
and since both models have at least $n\geq k$ realizations of
each pawn, it is easy to define an injection $f$ from $V$ into $A$
that preserves $1$-types and such that $f(u_i) = b_i$ for each $i\in\{1,...\, , k'\}$.
Therefore $\mathfrak{A}'\models\psi(a_1,...\, ,a_k)$
iff $\mathfrak{A}\models\psi(f(a_1),...\, ,f(a_k))$.
Since $\mathfrak{A}\models\varphi$, we
have $\mathfrak{A}\models\psi(f(a_1),...\, ,f(a_k))$
and therefore $\mathfrak{A}'\models\psi(a_1,...\, ,a_k)$.
\end{proof}
\end{lemma}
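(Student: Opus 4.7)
The plan is to split the check according to the normal form structure of $\varphi$, handling existential and universal conjuncts separately, and to exploit the design of the cloning stage, which is tailored to preserve precisely the information needed.

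For the existential conjuncts $\forall x\exists y_1\ldots\exists y_{k_i}\varphi_i^{\exists}$, I would fix any $a\in A'$ and exhibit witnesses. If $a\in A$, I would reuse the witnesses present in $\mathfrak{A}$, noting that the cloning stage never overwrites tables whose arguments lie entirely in $A$, and that no new elements interfere since the matrix only quantifies over the chosen witnesses. If $a=p'$ is a clone of some pawn $p\in P$, I would reuse the witnesses of $p$. The cloning stage was set up so that for every $S\subseteq A\setminus\{p\}$ of size $\leq m-1$ we have $tb_{\mathfrak{A}'}(p',\bar s)=tb_{\mathfrak{A}}(p,\bar s)$ and $tp_{\mathfrak{A}'}(p')=tp_{\mathfrak{A}}(p)$. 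By one-dimensionality of $\mathrm{U}_1$, every higher-arity atom in the existential matrix uses exactly the live variable set, which has size at most $m=\min\{n,r\}$, so all atoms encountered have been defined consistently in the cloning stage.

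For the universal conjuncts $\forall x_1\ldots\forall x_k\,\psi(x_1,\ldots,x_k)$, I would fix an arbitrary tuple $(a_1,\ldots,a_k)\in {A'}^k$ (with repetitions allowed) and let $\{u_1,\ldots,u_{k'}\}=\mathit{live}(\psi[a_1,\ldots,a_k])$. By construction, the table on $(u_1,\ldots,u_{k'})$ was assigned — either in the cloning stage or, via the functions $t_k$, in the completion stage — by copying from some tuple $(b_1,\ldots,b_{k'})$ of \emph{distinct} elements of $A$ with matching 1-types. I would then build a 1-type-preserving injection $f\colon\{a_1,\ldots,a_k\}\to A$ satisfying $f(u_i)=b_i$. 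Once $f$ exists, $\mathfrak{A}\models\varphi$ gives $\mathfrak{A}\models\psi(f(a_1),\ldots,f(a_k))$, and since higher-arity atoms of $\psi$ involve only the live variables (whose table is shared with $(b_1,\ldots,b_{k'})$), unary atoms depend only on 1-types, and equality atoms depend only on coincidence, the value of $\psi$ is the same under the two assignments.

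The core obstacle I expect is producing the injection $f$. Here I would distinguish royal from pawn 1-types: royal 1-types have exactly the same realizations in $\mathfrak{A}$ and $\mathfrak{A}'$ (since the cloning process only clones pawns), so any $a_j$ of royal 1-type already lies in $A$ and can be mapped to itself, compatibly with the constraint $f(u_i)=b_i$; for pawn 1-types, $\mathfrak{A}$ possesses $\geq n\geq k$ realizations, which leaves ample room to inject the remaining $a_j$'s of each pawn 1-type while avoiding the fixed images $b_1,\ldots,b_{k'}$. This enumeration-style argument is the only genuinely combinatorial step; the rest is bookkeeping of tables and 1-types along the injection.
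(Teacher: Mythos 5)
Your proposal is correct and follows essentially the same route as the paper: existential conjuncts are discharged by the design of the cloning stage, and universal conjuncts are handled by tracing the live-set table back to a tuple of distinct elements of $A$ and extending to a $1$-type-preserving injection $f$ with $f(u_i)=b_i$, using that royal types have identical realizations in $\mathfrak{A}$ and $\mathfrak{A}'$ while pawn types have at least $n\geq k$ realizations. You merely fill in details (the witness reuse for clones, the royal/pawn case split for $f$) that the paper dismisses as easy.
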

We now fix a \blue{normal form} sentence $\varphi$ of $\mathrm{U}_1$
with the set $\tau$ (with $<\, \in\tau$) of relation symbols
occurring in it. We also fix a $\tau$-model $\mathfrak{A}\in\mathcal{O}$.
We assume $\mathfrak{A}\models\varphi$ and
fix a $3$-cloning
extension $\mathfrak{A}'$ of $\mathfrak{A}$ w.r.t. $\varphi$.
We let $n$ be the width of $\varphi$ and $m_{\exists}$ the number of
existential conjuncts in $\varphi$.
The models $\mathfrak{A}$ and $\mathfrak{A}'$ as well as the
sentence $\varphi$ will remain fixed in the next 
two subsections (\ref{courtsection} and \ref{intervalpartitionsection}).
%
%
%
In the two subsections we will study these
two models and the sentence $\varphi$
and isolate some constructions and concepts that will be used later on.
\subsection{Identification of a court}\label{courtsection}
Let $K$ denote the set of kings of $\mathfrak{A}'$  (w.r.t. $\varphi$). Thus $K$ is
also the set of kings of $\mathfrak{A}\subseteq \mathfrak{A'}$.
We next identify a finite substructure $\mathfrak{C}$ of $\mathfrak{A}$
called a \textit{court of\, $\mathfrak{A}$ with respect to $\varphi$}.
We note that a court of $\mathfrak{A}$ w.r.t. $\varphi$ can in
general be chosen in several ways.
Before defining $\mathfrak{C}$,
we construct a certain set $D\subseteq A$.
Consider a pair $(\alpha,\varphi_i^{\exists})$, where $\alpha$ is a $1$-type
(over $\tau$) and $\varphi_i^{\exists}$ an existential conjunct of $\varphi$.
If there exists a \textit{free}
witness structure in $\mathfrak{A}$ for $\varphi_i^{\exists}$
and some element $a\in A$ realizing $1$-type $\alpha$,
then pick exactly one such free witness structure $\mathfrak{A}_{a,\varphi_i^{\exists}}$
and define $D(\alpha,\varphi_i^{\exists}) :=
\bar{A}_{a,\varphi_i^{\exists}}$, i.e., $D(\alpha,\varphi_i^{\exists})$
is the \emph{live part} of $\mathfrak{A}_{a,\varphi_i^{\exists}}$.
Otherwise define $D(\alpha,\varphi_i^{\exists}) = \emptyset$.
Define $D$ to be the union of the sets $D(\alpha,\varphi_i^{\exists})$ for
each $1$-type $\alpha$ (over $\tau$) and
each existential conjunct $\varphi_i^{\exists}$ of $\varphi$.
The size of $D$ is bounded by $m_{\exists}|\bbalpha|n$.
Now, for each $a \in (K \cup D) \subseteq A$ and each $\varphi_{i}^{\exists}$,
let $\mathfrak{C}_{a,\varphi_{i}^{\exists}}$ be
some witness structure for the pair $(a,\varphi_{i}^{\exists})$ in $\mathfrak{A}$.
Define the domain $C$ of $\mathfrak{C}$ as follows:
%
%
\begin{center}
$C\ :=\ \bigcup\limits_{a \in K\cup D,\ 1 \leq i \leq m_\exists} 
C_{a,\varphi_{i}^{\exists}}$ 
\end{center}
%
%

Note that $K$ and $D$ are both subsets of $C$.
We define $\mathfrak{C}$ to be the substructure of $\mathfrak{A}$
induced by $C$, i.e., $\mathfrak{C} := \mathfrak{A}\upharpoonright C$.
Thus $\mathfrak{C}$ is also a substructure of $\mathfrak{A}'$.
An upper bound for
the size of $C$ is obtained as follows, where $\bbalpha$
denotes $\bbalpha_{\tau}$\hspace{1mm}.
\begin{flalign*}
|C| &\leq |D \cup K|nm_\exists
\leq (nm_{\exists}|\blet{\alpha}| + n|\blet{\alpha}|)nm_\exists  \\
&\leq (|\varphi|^2|\blet{\alpha}| + |\varphi||\blet{\alpha}|)|\varphi|^2
%
%
\leq  2|\varphi|^4|\blet{\alpha}|.
\end{flalign*}
We call $\mathfrak{C}$ the court of $\mathfrak{A}$ (w.r.t. $\varphi$). 
Note that we could
have chosen the court $\mathfrak{C}$ in many ways from $\mathfrak{A}$.
Here we choose a single court $\mathfrak{C}$ for $\mathfrak{A} \subseteq \mathfrak{A'}$ 
and fix it for Subsection \ref{intervalpartitionsection}.
%
%
%
%
\blue{
Note also that 
if $p \in C\subseteq A \subseteq A'$ is a pawn, 
then it forms the interval $I_p = \{p_0\}\cup\{p\} \cup \{p_2\}$ in the 3-cloning extension $\mathfrak{A'}$,
where $p_0$ and $p_2$ are of the same 1-type as $p$, and they are in $A'$ but not in $A$.
Therefore, choosing elements from $A$ guarantees that
no pawn in $C$ of a 1-type $\alpha$ satisfies either $min_{\alpha}(x)$ or $max_{\alpha}(x)$
in the 3-cloning extension $\mathfrak{A'}$.
}

\subsection{Partitioning cloning extensions
%
%
into intervals}\label{intervalpartitionsection}
In this subsection we partition
the $3$-cloning extension $\mathfrak{A}'$ of
the ordered structure $\mathfrak{A}$ 
into a finite number of non-overlapping intervals.
Roughly speaking, the elements of
the court $\mathfrak{C}$ of $\mathfrak{A}$ will all create a singleton interval and
the remaining interval bounds will indicate the least upper bounds and
greatest lower bounds of occurrences of $1$-types in $\mathfrak{A}'$.
We next define the partition formally; we call the resulting family of intervals $I_s\subseteq A'$
the \emph{canonical partition of\, $\mathfrak{A}'$ with
respect to $\mathfrak{C}$}.
We begin with some auxiliary definitions. Recall that $\bbalpha_{\mathfrak{A}}$
denotes the set of $1$-types realized in $\mathfrak{A}$,
and thus $\bbalpha_{\mathfrak{A}} = \bbalpha_\mathfrak{A'}$.
For each non-royal $1$-type $\alpha$ in $\bbalpha_{\mathfrak{A}}$, define the sets
\begin{flalign*}
&A'_{\alpha} = \{a \in A'\mid tp_{\mathfrak{A}'}(a) = \alpha \},\;
\mathcal{D}_{\alpha}^{-} = \bigcup_{a \in A'_{\alpha}} \{b \in A' \mid a \leq b \}, \\ 
&\text{ and } 
\mathcal{D}_{\alpha}^{+} = \bigcup_{a \in A'_{\alpha}}\{b \in A' \mid b \leq a \}. 
\end{flalign*}
In an ordered set $(L,<)$, an \textit{interval bound} is
defined to be a nonempty set $S\subsetneq L$ that is
downwards closed ($u'<u\in S\Rightarrow u'\in S$).
%
%
%
%
A finite number of interval bounds define a partition of an ordered set
into a finite number of intervals in a natural way.
We define the following finite collection of interval bounds for $\mathfrak{A}'$.
\begin{itemize}
\item Every $c \in C$ defines two interval bounds,
$\{ u\in A'\ |\ u < c\}$ and $\{u\in A'\, |\, u\leq c\}$.
Thereby each $c\in C$
forms a singleton interval $\{c\}$.
\item Each non-royal 1-type $\alpha$ creates two
interval bounds: the sets $A'\setminus\mathcal{D}_{\alpha}^{-}$
and $\mathcal{D}_{\alpha}^{+}$.
\end{itemize}
This creates a finite family of intervals $(I_s)_{1\leq s \leq N}$ that
partitions $A'$. Here $N$ is the finite total number of intervals in
the family. The intervals $I_s$ in the family are enumerated 
in the natural way, so if $s<s'$ for some $s,s'\in\{1,...\, , N\}$,
then $u<u'$ for all $u\in I_s$ and $u'\in I_{s'}$.
%
%
%
%
%
%

%
%
%
We obtain an upper bound for $N$ as follows.
Observe that the number of interval bounds is
bounded from above by $2(|C|+|\bbalpha|)$,
where $\bbalpha$ denotes the set $\bbalpha_{\tau}$ of all $1$-types over $\tau$.
Thus the number of intervals is 
definitely bounded from above by $2(|C|+|\bbalpha|) + 1$.
Since we know from the previous section
that $|C|\ =\ 2|\varphi|^4|\bbalpha|$, we obtain that
\begin{equation*}
N \leq 2(2|\varphi|^4|\blet{\alpha}|+|\blet{\alpha}|) + 1
= (4|\varphi|^4+2)|\blet{\alpha}| + 1\leq 6|\varphi|^4|\blet{\alpha}|.
\end{equation*}
%
%
%
%

\blue{The next two subsections (\ref{admissibilitytuplessection}
 and \ref{pseudo-orderingaxiomssect}) provide two quite specific constructions
used later in the formulation of Lemma \ref{DecidabilityLemma2}.
Intuitively, the first construction can be regarded as 
some sort of data structure derived from the analysis done in the previous subsections and
the second is in fact simply a set of axioms derived from this data structure.
It is recommended to use especially Subsections \ref{admissibilitytuplessection}
 and \ref{pseudo-orderingaxiomssect} as a reference while reading Section \ref{reducing}. }
\subsection{Defining admissibility tuples}\label{admissibilitytuplessection}
%
%
%
%
%
%
Let $\chi$ be a normal form sentence of $\mathrm{U}_1$
with the set $\sigma$ of relation symbols. Assume $<\, \in\sigma$.
We now define the notion of an \textit{admissibility tuple for $\chi$}.
At this stage we only give a formal definition of admissibility tuples.
The point is to capture enough information of ordered models of $\chi$
to the admissibility tuples for $\chi$ so that satisfiability of $\mathrm{U}_1$
over ordered structures can be
reduced to satisfiability of $\mathrm{U}_1$ over general 
structures in Section \ref{reducing}. In particular, our
objective is to facilitate Lemma \ref{DecidabilityLemma2}.
Once we have given the formal definition of an admissibility tuple, we
provide an example (see Lemma \ref{technlemma}) how a concrete linearly
ordered model of a $\mathrm{U}_1$-sentence can be
canonically associated with an admissibility tuple for that sentence,
thereby providing background intuition related to admissibility tuples.
Indeed, the reader may find it helpful to refer to that part while
internalising the formal definitions.
%
%
%
%
%
%
%
%
%
%
%
%
%
%

%
%
%
Consider a tuple
$$\Gamma_{\chi} := ( \mathfrak{C}^*, 
(\blet{\alpha}_{\sigma,s})_{1\leq s \leq N^{*}}, \blet{\alpha}_\sigma^{K},
\blet{\alpha}_{\sigma}^{\bot}, \blet{\alpha}_{\sigma}^{\top}, 
\delta, F)$$ 
such that the following conditions hold.
\begin{itemize}
\item
$\mathfrak{C}^*$ is a linearly ordered $\sigma$-structure,
and the size of the domain $C^*$ of $\mathfrak{C}^*$ is
bounded by $2|\chi|^4|\bbalpha_{\sigma}|$.
Compare this to the bound $2|\varphi|^4|\bbalpha_{\tau}|$ 
for the size of $\mathfrak{C}$ from Section \ref{courtsection}.
We call $\mathfrak{C}^*$ the \emph{court structure of $\Gamma_{\chi}$}.
\item
$N^* \in \mathbb{Z}_{+}$ is an
integer such that $|C^*| \leq N^* \leq 6|\chi|^4|\bbalpha_{\sigma}|$,
%
%
%
and $(\bbalpha_{\sigma,s})_{1\leq s \leq N^*}$
is a family of sets $\bbalpha_{\sigma,s}\subseteq\bbalpha_{\sigma}$ of $1$-types
such that we have $\bbalpha_{\sigma,s} \subseteq
\{\alpha \in \bbalpha_{\sigma} \mid
\neg(v_1 < v_1) \in \alpha \}$ for each $s\in\{1,...\, ,N^*\}$;
recall here that $v_1$ is the variable with which we
formally speaking specify $1$-types, and recall also that
in addition to ordered models, we will ultimately also
consider model classes where $<$ is
simply a binary symbol not necessarily interpreted as an order.
Compare the bound $6|\chi|^4|\bbalpha_{\sigma}|$ to the
bound $6|\varphi|^4|\bbalpha_{\tau}|$ for $N$ from
Section \ref{intervalpartitionsection}.
We call $N^*$ the \emph{index of $\, \Gamma_{\chi}$.}
%
%
%
%
%
%
\item  $\bbalpha_{\sigma}^{K}\subseteq\bbalpha_{\sigma}$
and also $\bbalpha_{\sigma}^{\bot}\subseteq
\bbalpha_{\sigma}$ and $\bbalpha_{\sigma}^{\top}\subseteq\bbalpha_{\sigma}$
%
%
%
%
%
%
%
\item
$\delta$ is an injective mapping from $C^*$ to $\{1,\ldots,N^*\}$. 
%
%
%
\item
$F$ is a subset of the set $\bbalpha_{\sigma}\times
\Phi^{\exists}$, where $\Phi^{\exists}$ is
the set of all existential conjuncts of $\chi$.
%
%
%
%
%
%
\end{itemize}
Note that we could have chosen the tuple $\Gamma_{\chi}$
above in multiple ways.
%
%
We denote the set of all tuples $\Gamma_{\chi}$ that
satisfy the above conditions by
$\hat{\Gamma}_{\chi}$.
The tuples in $\hat{\Gamma}_{\chi}$ are
called \textit{admissibility tuples} for $\chi$.
%
%
%
%
%

%
\begin{lemma}\label{th!MostImportantProofInYourLife}
The length of the (binary) description of\, $\Gamma_{\chi}$ is bounded exponentially in $|\chi|$.
\begin{proof}
Let $\chi$ be a sentence and $\sigma$ the set of relation 
symbols in $\chi$ with $<\in\sigma$.
Consider an arbitrary tuple
$$\Gamma_{\chi} = ( \mathfrak{C}^*, 
(\blet{\alpha}_{\sigma,s})_{1\leq s \leq N^{*}}, \blet{\alpha}_\sigma^{K},
\blet{\alpha}_{\sigma}^{\bot}, \blet{\alpha}_{\sigma}^{\top}, 
\delta, F)$$
such that $\Gamma_{\chi}\in\hat{\Gamma}_{\chi}$.
To prove Lemma \ref{th!MostImportantProofInYourLife}, 
we show that each of the seven elements in $\Gamma_{\chi}$
has a binary description whose length is exponentially bounded in $|\chi|$.
This clearly suffices to prove the lemma.
For describing the model $\mathfrak{C^*}$, we use the straightforward convention from
Chapter 6 of \cite{DBLP:books/sp/Libkin04} according to which
the unique description of $\mathfrak{C^*}$ with some
ordering of $\sigma$ is of the length
$|C^*|+1 +\sum_{i=1}^{|\sigma|}\, |C^*|^{ar(R_i)}$
where $ar(R_i)$ is the arity of $R_i \in \sigma$.
Since we have $|C^*|\leq 2|\chi|^4|\bbalpha_{\sigma}|$
by definition of the tuples in $\hat{\Gamma}_{\chi}$,
and since we clearly have $|\bbalpha_{\sigma}|\leq 2^{|\chi|}$, we 
observe that $|C^*|$ is exponentially bounded in $|\chi|$.
Since $\mathit{ar}(R_i)\leq |\chi|$, each term $|C^*|^{\mathit{ar}(R_i)}$ is
likewise exponentially bounded in $|\chi|$.
Furthermore, as $|\sigma|\leq |\chi|$, 
we conclude that the length of the description of $\mathfrak{C}^*$ is
exponentially bounded by $|\chi|$.
As $|\bbalpha_{\sigma}| \leq 2^{|\chi|}$,
and as each 1-type $\alpha \in \bbalpha_{\sigma}$ can clearly be encoded by a string
whose length is polynomial in $|\chi|$,
we can describe $\bbalpha_{\sigma}$ with a
description that is exponentially bounded in $|\chi|$, and
as $\bbalpha_{\sigma}^K$, $\bbalpha_{\sigma}^\bot $, and $\bbalpha_{\sigma}^\top$
are subsets of $\bbalpha_{\sigma}$, their descriptions are also
exponentially bounded in $|\chi|$.
Moreover, the same upper bound bounds each member $\bbalpha_{\sigma,s}$
of the family $(\bbalpha_{\sigma,s})_{1\leq s \leq N^*}$.
Therefore, as we
have $N^* \leq 6|\chi|^4|\bbalpha_{\sigma}|$ by
the definition of tuples in $\hat{\Gamma}_{\chi}$, we
observe that $N^* \leq 6|\chi|^4\cdot 2^{|\chi|}$,
and therefore the length of the
description of $(\bbalpha_{\sigma,s})_{1\leq s \leq N^*}$ is also
exponentially bounded in $|\chi|$.
Due to the bounds for $|C^*|$ and $N^*$ identified above,
the function $\delta: C^* \rightarrow \{1,\ldots, N^*\}$
can clearly be encoded by a description bounded exponentially in $|\chi|$.
Let $m_{\exists}$ denote the number of 
existential conjuncts in $\chi$.
Thus we have $|F| \leq m_{\exists}|\bbalpha_{\sigma}|\leq |\chi|\cdot 2^{|\chi|}$, so the
description of $F$ can clearly be bounded exponentially in $|\chi|$.
\end{proof}
\end{lemma}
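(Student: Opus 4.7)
The plan is to bound the length of the binary description of each of the seven components of $\Gamma_{\chi}$ separately, and then observe that the sum of seven quantities, each exponential in $|\chi|$, is still exponential in $|\chi|$. Two baseline estimates drive essentially everything else: since a $1$-type is a maximally consistent set of unary $\sigma$-atoms (and negations) in the single variable $v_1$, and the number of such atoms is polynomial in $|\sigma|\leq|\chi|$, we obtain $|\bbalpha_\sigma|\leq 2^{|\chi|}$; and then, by the constraints built into the definition of $\hat{\Gamma}_\chi$, $|C^*|\leq 2|\chi|^4\cdot 2^{|\chi|}$ and $N^*\leq 6|\chi|^4\cdot 2^{|\chi|}$, both exponential in $|\chi|$.

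For the court structure $\mathfrak{C}^*$, I would invoke the standard tabular encoding of a finite relational structure (as in Libkin's textbook, which the paper already cites), giving a description length of $|C^*|+1+\sum_{i=1}^{|\sigma|}|C^*|^{\mathit{ar}(R_i)}$. Since $\mathit{ar}(R_i)\leq|\chi|$ and $|C^*|\leq 2^{O(|\chi|)}$, each term $|C^*|^{\mathit{ar}(R_i)}$ is bounded by $\bigl(2^{O(|\chi|)}\bigr)^{|\chi|}=2^{O(|\chi|^2)}$, which is still singly exponential; combining with $|\sigma|\leq|\chi|$ preserves the exponential bound. Next, each $\bbalpha_{\sigma,s}$ as well as each of $\bbalpha_\sigma^K$, $\bbalpha_\sigma^\bot$, $\bbalpha_\sigma^\top$ is a subset of $\bbalpha_\sigma$, and hence has a description of exponential length in $|\chi|$; multiplying by the exponential bound $N^*$ for the length of the family still yields an exponential total.

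For $\delta$, being an injection $C^*\to\{1,\ldots,N^*\}$, a table of pairs takes $|C^*|\cdot\log N^*=2^{O(|\chi|)}$ bits. Finally, for $F\subseteq\bbalpha_\sigma\times\Phi^{\exists}$, we have $|F|\leq m_\exists\cdot|\bbalpha_\sigma|\leq|\chi|\cdot 2^{|\chi|}$, so $F$ also has an exponential description. Summing the seven contributions yields the required single-exponential bound.

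There is no real obstacle in this proof; it is pure accounting. The only point one must be slightly careful about is the structure encoding, where both the universe size $|C^*|$ and the maximal arity $\mathit{ar}(R_i)$ depend on $|\chi|$, so a naive reading might suggest double-exponential blowup. The observation that $|C^*|^{\mathit{ar}(R_i)}\leq 2^{O(|\chi|^2)}$ is what keeps us safely in \textsc{NExpTime}-land, and this is the place to be explicit in the write-up.
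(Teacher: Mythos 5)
Your proposal is correct and follows essentially the same route as the paper's own proof: a component-by-component accounting of the seven entries of $\Gamma_{\chi}$, using the Libkin-style tabular encoding for $\mathfrak{C}^*$ together with the bounds $|C^*|\leq 2|\chi|^4|\bbalpha_{\sigma}|$, $N^*\leq 6|\chi|^4|\bbalpha_{\sigma}|$, $|\bbalpha_{\sigma}|\leq 2^{|\chi|}$, and $|F|\leq m_{\exists}|\bbalpha_{\sigma}|$. Your explicit remark that $|C^*|^{\mathit{ar}(R_i)}\leq 2^{O(|\chi|^2)}$ remains singly exponential is exactly the point the paper also makes, just stated a bit more precisely.
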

For each $s\in\{1,...\, , N^*\}$,
let $\bbalpha_{\sigma,s}^{-}$ and $\bbalpha_{\sigma,s}^{+}$ be
the subsets of $\bbalpha_{\sigma,s}$ defined as follows:
$\bbalpha_{\sigma,s}^{-} 
:= \bbalpha_{\sigma,s} \setminus \bigcup_{i<s}\bbalpha_{\sigma,i}$ and
$\bbalpha_{\sigma,s}^{+} 
:= \bbalpha_{\sigma,s} \setminus \bigcup_{i>s}\bbalpha_{\sigma,i}$.
The following definition provides an important 
classification of admissibility tuples.
\begin{definition}\label{admidefn}
Consider the set $\hat{\Gamma}_{\chi}$ of
admissibility tuples for $\chi$.
We define the following six conditions,
called \emph{admissibility conditions} for $\chi$, in
order to classify the set $\hat{\Gamma}_{\chi}$
into different sets of admissibility tuples.
\begin{enumerate}
\renewcommand{\theenumi}{\roman{enumi}}
\item \label{AdmissibilitySubsets}
The sets $\bbalpha_{\sigma}^{K}$, $\bbalpha_{\sigma}^{\top}$
and $\bbalpha_{\sigma}^{\bot}$ are subsets of\,
$\bigcup_{1\leq s \leq N^*} \bbalpha_{\sigma,s}$.
\item \label{AdmissibilityCourtiers} If $\bbalpha_{\sigma,s}
\cap \bbalpha_{\sigma}^{K} \neq \emptyset$,
then $s = \delta(c)$ for some $c\in C^*$.
Also, for every $c \in C^*$, it holds that
$\bbalpha_{\sigma,\delta(c)} = \{ tp_{\mathfrak{C}^*}(c) \}$, and
furthermore, $tp_{\mathfrak{C}^*}(c) \in \bbalpha_{\sigma}^{K}$ or\
$\bbalpha_{\sigma,\delta(c)}^{-} = \emptyset = \bbalpha_{\sigma,\delta(c)}^{+}$.
%
%
%
%
%
%
\item \label{AdmissibilityMinusalphaone}
\ $|\bbalpha_{\sigma,s}^{-}| \leq 1$ for all $s\in\{1,...\, ,N^*\}$\vspace{1mm}
\item \label{AdmissibilityBot}
\ $\bbalpha_{\sigma}^{\bot} = \bigcup_{1\leq s \leq N^*} \bbalpha_{\sigma,s}$
\item \label{AdmissibilityPlusalphaone}
\ $|\bbalpha_{\sigma,s}^{+}| \leq 1$ for all $s\in\{1,...\, ,N^*\}$\vspace{1mm}
\item \label{AdmissibilityTop}
\ $\bbalpha_{\sigma}^{\top} = \bigcup_{1\leq s \leq N^*} \bbalpha_{\sigma,s}$
\end{enumerate}
\end{definition}
\begin{definition}\label{admidefn2}
An admissibility tuple $\Gamma_{\chi}$ is
admissible for $\mathcal{O}$
if the conditions \ref{AdmissibilitySubsets}
and \ref{AdmissibilityCourtiers} in Definition \ref{admidefn} are satisfied.
It is admissible for $\mathcal{WO}$
if the four conditions \ref{AdmissibilitySubsets}-\ref{AdmissibilityBot} in Definition
\ref{admidefn} are satisfied.
Finally, it is admissible for $\mathcal{O}_{fin}$ if all
the six conditions \ref{AdmissibilitySubsets}-\ref{AdmissibilityTop} in Definition
\ref{admidefn} are satisfied.
We call admissibility for $\mathcal{O}$ the lowest degree of
admissibility and admissibility for $\mathcal{O}_{fin}$ the highest.
\end{definition}
Let $\varphi$ be a normal form $\mathrm{U}_1$-sentence containing $<$
and $\mathfrak{A}\models\varphi$ a model. Let $\mathfrak{C}$ be a
court of $\mathfrak{A}$ w.r.t. $\varphi$
and $\mathfrak{A}'$ a $3$-cloning
extension of $\mathfrak{A}$ w.r.t. $\varphi$.
Let $(I_s)_{1\leq s \leq N}$ be the canonical
partition of $\mathfrak{A}'$ w.r.t. $\mathfrak{C}$.
We will next specify a tuple
%
%
%
%
%
%
$$\Gamma_{\varphi}^{\mathfrak{C},\mathfrak{A},\mathfrak{A}'} :=
( \mathfrak{C}, (\blet{\alpha}_{\mathfrak{A}',s})_{1\leq s\leq N},
\blet{\alpha}_{\mathfrak{A'}}^{K},
\blet{\alpha}_{\mathfrak{A'}}^{\bot}, \blet{\alpha}_{\mathfrak{A}'}^{\top}, \delta,F)$$
%
%
%
%
%
%
which we call a \emph{canonical admissibility
tuple of\, $\mathfrak{A}'$
w.r.t $(\mathfrak{C},\mathfrak{A},\varphi)$} (cf.
Lemma \ref{technlemma} below).
%
%
%
%
%
%
%
%
%
%
%

%
We now specify the elements of the
tuple $\Gamma_{\varphi}^{\mathfrak{C},\mathfrak{A},\mathfrak{A}'}$ above;
note that $\mathfrak{C}$ has already been specified to be a court of $\mathfrak{A}$.
Recall that $(I_s)_{1\leq s \leq N}$ is the canonical
partition of $\mathfrak{A}'$ w.r.t. $\mathfrak{C}$
and define the family $(\bbalpha_{\mathfrak{A}',s})_{1\leq s \leq N}$ such that
$\bblet{\alpha}_{\mathfrak{A}',s} :=
\{ tp_{\mathfrak{A'}}(a) \mid a \in I_{s} \}$ for all $s\in\{1,...\, , N\}$.
%
%
%
%
%
%
%
%
%
%
%
%
%
%
%
%
%
%
%
%
Let $\bbalpha_{\mathfrak{A}'}^{K} \subseteq \bbalpha_{\mathfrak{A}'}$
be the set of the royal 1-types realized in $\mathfrak{A}'$, and 
define $\bbalpha_{\mathfrak{A}'}^{\bot}
\subseteq \bbalpha_{\mathfrak{A}'}$ (respectively, $\bbalpha_{\mathfrak{A}'}^{\top}
\subseteq \bbalpha_{\mathfrak{A}'}$) to be the set of 1-types that
have a minimal (resp., maximal) realization in $\mathfrak{A}'$.
Note that if $\mathfrak{A}'$ is in $\mathcal{WO}$, 
we have $\bbalpha_{\mathfrak{A}'}^{\bot} = \bbalpha_{\mathfrak{A}'}$,
and if $\mathfrak{A}'$ is also in $\mathcal{O}_{fin}$, then
$\bbalpha_{\mathfrak{A}'}^{\bot} = \bbalpha_{
\mathfrak{A}'}^{\top} = \bbalpha_{\mathfrak{A}'}$.
For every $c$ in the domain $C$ of $\mathfrak{C}$, we
define $\delta(c):=j\in\{1,...\, ,N\}$ such that $I_j=\{c\}$.
We let $F$ be the set of those pairs $(\alpha,\varphi_i^{\exists})$
that have a witness structure in $\mathfrak{A}'$ whose live part is free.
%
%
%
%
%
%
%
%
%
%
%
%
%
%
%
%
%
%

%
\begin{lemma}\label{technlemma}
Let $\mathfrak{A}\in\mathcal{K}\in\{\mathcal{O},\mathcal{WO},\mathcal{O}_{fin}\}$ 
and suppose $\Gamma_{\varphi}^{\mathfrak{C},\mathfrak{A},
\mathfrak{A}'}$ is a
canonical admissibility tuple for $\mathfrak{A}'$ w.r.t $(\mathfrak{C},
\mathfrak{A},\varphi)$.
Then $\Gamma_{\varphi}^{\mathfrak{C},\mathfrak{A},
\mathfrak{A}'}\in\hat{\Gamma}_{\varphi}$ 
and $\Gamma_{\varphi}^{\mathfrak{C},\mathfrak{A},\mathfrak{A}'}$ is
admissible for $\mathcal{K}$.
\begin{proof}
Note that by definition, since $\Gamma_{\varphi}^{\mathfrak{C},
\mathfrak{A},\mathfrak{A}'}$ is canonical admissibility tuple 
for $\mathfrak{A}'$ w.r.t. $(\mathfrak{C},\mathfrak{A},\varphi)$, the
structure $\mathfrak{C}$ is a
court of $\mathfrak{A}$ w.r.t. $\varphi$ and we have $\mathfrak{A}\models\varphi$,
and furthermore, the set of relation symbols in $\varphi$ (to be
denoted by $\tau$) contains $<$.
We let $N$ denote the index of $\Gamma_{\varphi}^{\mathfrak{C},
\mathfrak{A},\mathfrak{A}'}$. We note that $N$ is the
number of intervals in the canonical
partition of $\mathfrak{A}'$ w.r.t. $\mathfrak{C}$.
By the discussion in Section \ref{courtsection}, $\mathfrak{C}$
is an ordered structure whose size is bounded
by $2|\varphi|^4|\bbalpha|$ where $\bbalpha$ is
the set of all $1$-types over $\tau$.
By Section \ref{intervalpartitionsection},
we have $|C| \leq N \leq 6|\varphi|^4|\bbalpha|$.
Thus the admissibility condition \ref{AdmissibilityCourtiers}
from Definition \ref{admidefn} is the
only non-trivial remaining condition to show in order to
conclude that $\Gamma_{\varphi}^{\mathfrak{C},\mathfrak{A},\mathfrak{A}'}$ is an
admissibility tuple in $\hat{\Gamma}_{\varphi}$
admissible for each $\mathcal{K}\in\{\mathcal{O},\mathcal{WO},
\mathcal{O}_{fin}\}$ such that $\mathfrak{A}\in\mathcal{K}$.
We next argue that this condition indeed holds.
First assume that $\bbalpha_{\mathfrak{A}',s}
\cap\bbalpha_{\mathfrak{A}'}^K\not=\emptyset$.
Thus $\alpha\in\bbalpha_{\mathfrak{A}',s}$ for
some royal $1$-type $\alpha$ realized in $\mathfrak{A}'$.
Therefore the interval $I_s\subseteq A'$
contains a king $c$ of $\mathfrak{A}'$ that realizes $\alpha$.
Since kings of $\mathfrak{A}'$ are in singleton intervals of
the family $(I_t)_{1\leq t \leq N}$, we have $I_s = \{c\}$.
Furthermore, since kings of $\mathfrak{A}'$ are all in $\mathfrak{C}$, we
have $c$ in the domain of $\delta$, and thus,  by the
definition of $\delta$, we have $I_{\delta(c)} = \{c\}$.
Thus we have $I_s = \{c\} = I_{\delta(c)}$, whence $s = \delta(c)$.
%
%
%
%
%
%
%
Thus the first part of admissibility
condition \ref{AdmissibilityCourtiers} is satisfied.
%

%
%
%
%
%
%
%
%
To prove the second condition,
assume $c\in C$. Therefore the set $\{c\}$ was
appointed, as described in Section \ref{intervalpartitionsection}, to be a 
singleton interval $I_{\delta(c)}$ in the family $(I_s)_{1\leq s\leq N}$.
Thus $\bbalpha_{\mathfrak{A}',\delta(c)}
= \{tp_{\mathfrak{C}}(c)\}$.
To show that, furthermore, we
have $tp_{\mathfrak{C}}(c)
\in\bbalpha_{\mathfrak{A}'}^K$ or $\bbalpha_{\mathfrak{A}',\delta(c)}^{-}
= \emptyset = \bbalpha_{\mathfrak{A}',\delta(c)}^{+}$, we
consider two cases, the case where $c$ is a king
and the case where it is a pawn.
If $c$ is a king, then $tp_{\mathfrak{C}}(c)\in\bbalpha_{\mathfrak{A}'}^K$ by
the definition of $\bbalpha_{\mathfrak{A}'}^K$.
On the other hand, if $c\in C$ is a pawn, we argue as follows.
Now, as $C \subseteq  A \subseteq A'$,
we know that $c$ has two elements $u,u'\in A'$
of the same $1$-type (as $c$ itself) immediately
before and after $c$ that 
were introduced when 
constructing the $3$-cloning extension $\mathfrak{A}'$ of $\mathfrak{A}$
(see the beginning of Section \ref{Analysing}).
Therefore every $1$-type has
neither its first nor last realization in $\mathfrak{A}'$ in the
interval $I_{\delta(c)} = \{c\}$,
and hence $\bbalpha_{\mathfrak{A}',\delta(c)}^{-} =
\emptyset = \bbalpha_{\mathfrak{A}',\delta(c)}^{+}$, as required.
\end{proof}
\end{lemma}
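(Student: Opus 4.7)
The plan is to verify two things separately: first, that the tuple $\Gamma_{\varphi}^{\mathfrak{C},\mathfrak{A},\mathfrak{A}'}$ satisfies the formal requirements for membership in $\hat{\Gamma}_{\varphi}$, and second, that it satisfies the admissibility conditions of Definition \ref{admidefn} corresponding to the class $\mathcal{K}$. The membership claim is almost bookkeeping: Subsection \ref{courtsection} already gives $|C|\leq 2|\varphi|^4|\bbalpha_{\tau}|$, and Subsection \ref{intervalpartitionsection} gives $|C|\leq N\leq 6|\varphi|^4|\bbalpha_{\tau}|$. Since $\mathfrak{A}'\in\mathcal{O}$ interprets $<$ as a linear order, no element satisfies $v_1<v_1$, so each $\bbalpha_{\mathfrak{A}',s}$ only contains 1-types in $\{\alpha\mid\neg(v_1<v_1)\in\alpha\}$. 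The sets $\bbalpha^K_{\mathfrak{A}'},\bbalpha^{\bot}_{\mathfrak{A}'},\bbalpha^{\top}_{\mathfrak{A}'}$ are subsets of $\bbalpha_{\mathfrak{A}'}$ by construction; $\delta$ is injective because distinct elements of $C$ are placed in distinct singleton intervals in the canonical partition; and $F\subseteq\bbalpha_{\sigma}\times\Phi^{\exists}$ directly from its definition.

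For the admissibility conditions I would go through them case by case. Condition \ref{AdmissibilitySubsets} is immediate: $\bbalpha^K_{\mathfrak{A}'},\bbalpha^{\bot}_{\mathfrak{A}'},\bbalpha^{\top}_{\mathfrak{A}'}$ all consist of 1-types realized in $\mathfrak{A}'$, and since the intervals $(I_s)$ partition $A'$, every realized 1-type appears in some $\bbalpha_{\mathfrak{A}',s}$. For condition \ref{AdmissibilityCourtiers}, the first half follows because every king $c$ in $\mathfrak{A}'$ was appointed a singleton interval $I_{\delta(c)}=\{c\}$, so any $s$ with $\bbalpha_{\mathfrak{A}',s}\cap\bbalpha^K_{\mathfrak{A}'}\neq\emptyset$ must equal $\delta(c)$ for the unique king $c$ it contains. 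The second half splits by cases: if $c\in C$ is itself a king, then $tp_{\mathfrak{C}}(c)\in\bbalpha^K_{\mathfrak{A}'}$ by definition; otherwise $c$ is a pawn in $C\subseteq A\subseteq A'$, and I would invoke the defining feature of the $3$-cloning extension, namely that $c$ is flanked on both sides by fresh copies of the same 1-type, so the singleton interval $\{c\}$ contains neither the minimal nor the maximal realization of any 1-type. Hence $\bbalpha^{-}_{\mathfrak{A}',\delta(c)}=\emptyset=\bbalpha^{+}_{\mathfrak{A}',\delta(c)}$, as required.

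When $\mathcal{K}=\mathcal{WO}$ I would additionally verify conditions \ref{AdmissibilityMinusalphaone} and \ref{AdmissibilityBot}. The former reflects the design of the canonical partition: for each non-royal 1-type $\alpha$, the interval bound $A'\setminus\mathcal{D}^{-}_{\alpha}$ is placed precisely at the threshold where $\alpha$ first appears, so $\alpha$ belongs to $\bbalpha^{-}_{\mathfrak{A}',s}$ for exactly one $s$; singleton court intervals trivially satisfy the bound. The latter follows from the well-ordering property, which supplies a minimum for every nonempty $A'_{\alpha}$, so every realized 1-type has a minimal realization in $\mathfrak{A}'$ and therefore lies in $\bbalpha^{\bot}_{\mathfrak{A}'}$. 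For $\mathcal{K}=\mathcal{O}_{fin}$ I would prove the symmetric conditions \ref{AdmissibilityPlusalphaone} and \ref{AdmissibilityTop} by the same arguments with $\mathcal{D}^{+}_{\alpha}$ in place of $\mathcal{D}^{-}_{\alpha}$, using finiteness to guarantee maximal realizations.

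The main obstacle is the second half of condition \ref{AdmissibilityCourtiers} for pawns: this is precisely why the $3$-cloning extension was introduced in the previous section. Without the clones $p_0$ and $p_2$ flanking each pawn $p\in P$, a pawn $c\in C$ could happen to be the leftmost or rightmost realization of its 1-type in $\mathfrak{A}'$, which would contradict $\bbalpha^{-}_{\mathfrak{A}',\delta(c)}=\emptyset=\bbalpha^{+}_{\mathfrak{A}',\delta(c)}$. The bookkeeping in all remaining items is routine once the bounds from Subsections \ref{courtsection} and \ref{intervalpartitionsection} are in hand.
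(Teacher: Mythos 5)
Your proof is correct and follows essentially the same route as the paper: the size bounds from the court and partition constructions give membership in $\hat{\Gamma}_{\varphi}$, and the crux is condition (ii), where you use exactly the paper's key observation that every pawn in $C$ is flanked in the $3$-cloning extension by clones of its own $1$-type, so its singleton interval contributes neither a first nor a last realization of any type. The only difference is that you spell out conditions (i) and (iii)--(vi), which the paper dismisses as trivial; your sketches of those are sound.
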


%
%
%
%
%
%
%
%
%
\subsection{Pseudo-ordering axioms}\label{pseudo-orderingaxiomssect}
Let $\chi$ be a normal form sentence 
of $\mathrm{U}_{1}$ with the set $\sigma$ of relation symbols.
We assume that the symbol $<$ occurs in $\chi$.
Let $r$ be the highest arity occurring in the
symbols in $\sigma$, and let $n$ be the width of $\chi$.
Let $m_{\exists}$ be the number of existential conjuncts in $\chi$. Assume\
%
%
%
$$\Gamma_{\chi} = (\mathfrak{C}, (\blet{\alpha}_{\sigma,s})_{1\leq s \leq N}
, \blet{\alpha}_{\sigma}^{K},
\blet{\alpha}_{\sigma}^{\bot}, \blet{\alpha}_{\sigma}^{\top},
\delta, F)$$
%
%
%
is some admissibility tuple in $\hat{\Gamma}_{\chi}$.
In this subsection we construct a certain large sentence 
$\mathit{Ax}(\Gamma_{\chi})$ that axiomatizes structures
with properties given by $\Gamma_{\chi}$. The ultimate use of the
sentence $\mathit{Ax}(\Gamma_{\chi})$ will be revealed by the statement of
Lemma \ref{DecidabilityLemma2}, which is one of
our main technical results.
Note that in that lemma, satisfiability of $\mathit{Ax}(\Gamma_{\chi})$ is
considered in relation to classes of models where the symbol $<$ is not
necessarily interpreted as a linear order.
Let $K$, $D$, $P_{\bot}$, $P_{\top}$, and $U_s$ for each $s \in \{1,...\, , N\}$
be fresh unary relation symbols, where $N$ is the size of the family
$(\bbalpha_{\sigma,s})_{1\leq s \leq N}$ in $\Gamma_{\chi}$.
Intuitively, the relation symbols $K$ and $D$
correspond to a set of kings and a set of domains of
free witness structures, respectively, as we shall see.
The symbols $U_s$, for $1 \leq s \leq N$, correspond to
intervals, but this intuition is not precise as we 
shall interpret the predicates $U_s$ over models
where $<$ is not assumed to be a linear order.
%
%
%
%
%
The predicates $P_{\bot}$ and $P_{\top}$
will be axiomatized to contain the minimal and the maximal
realization of each 1-type belonging to $\bbalpha_{\sigma}^{\bot}$ and 
$\bbalpha_{\sigma}^{\top}$, respectively.
%
%

%
Let $\sigma'$ be the vocabulary $\sigma \cup \{K, D, P_{\bot}, P_{\top} \} \cup \{U_{s}\mid 1\leq s \leq N\}$. 
We define the \textit{pseudo-ordering axioms for $\Gamma_{\chi}$} (over $\sigma'$) as
follows. For most axioms we also give an informal description of its
meaning (when interpreted together with the other pseudo-ordering axioms).
Each of the 16 axioms is a $\mathrm{U}_1$-sentence in normal form.
\begin{enumerate}
  \item \label{POAchi} $\chi$
%
%
  \item \label{POApartition} The predicates $U_s$ partition the universe:
 $\\ \bigwedge\limits_{s} \exists x\, U_{s}x\ \wedge\ \forall x \big(\, \bigvee\limits_{s} (\, U_{s}x \wedge 
  \bigwedge\limits_{t \neq s} \neg U_{t}x)\, \big)$
  \item \label{POArealizations} For all $s\in\{1,...\, , N\}$, the elements in $U_{s}$ realize exactly the 1-types (over $\sigma$) in $\bbalpha_{\sigma,s}$:
  $\\\bigwedge \limits_{1\leq s \leq N} \forall x ( \, U_{s}x
   \hspace{0.1cm} \leftrightarrow  \hspace{0.1cm} \bigvee_{\alpha \in \bbalpha_{\, \sigma,s}} \alpha(x) \,)$\\
  Note indeed that the $1$-types $\alpha$ in $\bbalpha_{\sigma,s}$ are with respect to the vocabulary $\sigma$,
  and thus are definitely not $1$-types with respect to the extended vocabulary $\sigma'$.
  \item  \label{POAsingleton} Each predicate $U_{\delta(c)}$, where $c$ is an element in
   the domain $C$ of $\mathfrak{C}$, is a singleton set
  containing an element that realizes $\alpha = tp_{\mathfrak{C}}(c):\\$
  $\bigwedge \limits_{c \in C,\, \alpha = tp_{\mathfrak{C}}(c)} \bigl(\, \exists y (\, U_{\delta(c)}y \wedge \alpha(y)\, )
\hspace{0.1cm} \wedge \hspace{0.1cm} \forall x\forall y (\, (\,U_{\delta(c)}x \wedge U_{\delta(c)}y\,) \\
  {\hspace{1.8cm} \rightarrow x = y} \,) \, \bigr)$
  \item \label{POApawns} Each $\alpha \in (\bigcup \bbalpha_{\sigma,s} \setminus \bbalpha_{\sigma}^{K})$ is realized at least $n$  times (recall that $n$ is the width of $\chi$):
  $\\ \bigwedge \limits_{\alpha \in (\bigcup \bbalpha_{\sigma,s} \setminus \bbalpha_{\sigma}^{K})} 
  \exists x_1...\exists x_n(\, \bigwedge \limits_{i \neq j} (x_i \neq x_j) \wedge
  \bigwedge \limits_{i} \alpha(x_i) \,)$
  \item \label{POAkings} Each $\alpha \in \bbalpha_{\sigma}^{K}$ is realized at least once but at most $n-1$ times: 
  $\\ \bigwedge \limits_{\alpha \in \bbalpha_{\sigma}^{K}} \exists y\, \alpha(y) \wedge \forall x_1...\forall x_{n}  
  \bigl(\,(\,\bigwedge \limits_{i} \alpha(x_i)\,) \rightarrow \bigvee \limits_{j\neq k} x_j = x_k \,\bigr)$
\item \label{POAkingscourt}
$K$ is the set of realizations of types in $\bbalpha_{\sigma}^K$: \\
       $\forall x \bigl(\, (\bigvee\limits_{\alpha\, \in\,
              \bbalpha_{\sigma}^K} \alpha(x)\, ) \leftrightarrow Kx \, \bigr)$
  \item \label{POArunningfreeyeah}
In order to define the next axiom, we begin
with an auxiliary definition. For each 
existential matrix $\chi_{i}^{\exists}(x,y_1,...\, , y_{k_i})$ in $\chi$,
let the set $\{z_1,...\, , z_{l_i}\}\subseteq \{x, y_1,...\, , y_{k_i}\}$
be the set of live variables of $\chi_{i}^{\exists}(x,y_1,...\, , y_{k_i})$.
We then define the following axiom which asserts that
the set $F$ is the set of
all pairs $(\alpha,\chi_i^{\exists})$ that have a witness structure
whose live part is free, and furthermore, the set $D$ contains,
for each pair $(\alpha,\chi_i^{\exists})\in F$, the live part of at
least one free witness structure for $(\alpha,\chi_i^{\exists})$.\\ 
$\bigwedge\limits_{(\alpha,\chi_i^{\exists})\in F}
\exists x\exists y_{1}...\exists y_{k_i} \bigl(\,
\alpha(x)\wedge\chi_{i}^{\exists}(x,y_1,...,y_{k_i}\, ) \wedge \\
\bigwedge\limits_{1\leq j \leq l_i}(\, z_j\neq x\wedge D z_j\,) \, \bigr)
\hspace{0.1cm}$ \\
$\wedge\bigwedge\limits_{(\alpha,\chi_i^{\exists})\not\in F}
\forall x\forall y_{1}...\forall y_{k_i}  \bigl( \,
\neg\, (\,\alpha(x)\wedge\chi_{i}^{\exists}(x,y_1,...,y_{k_i}\,) \\
\wedge\bigwedge\limits_{1\leq j \leq l_i} z_j\neq x\,) \, \bigr)$
%
%
%
%
%
%
%
%
%
%
%
%
%
%
%
%
%
  \item \label{POAfreecourt} Axioms \ref{POAkings} and \ref{POAkingscourt} define the set $K$, 
  and $D$ is described by the previous axiom. The next axiom says that every element $c \in (K \cup D)$
  is in $\bigcup_{c \in C}U_{\delta(c)}$:
  $\forall x (\, (\, Kx \vee Dx \,) \rightarrow \bigvee\limits_{c\in C} U_{\delta(c)}x \,)$
  \item \label{POAkingswitnesses} There is a witness structure for every $c \in (K\cup D)$ such that 
  each element of the witness structure is in $\bigcup_{c \in C}U_{\delta(c)}$:
  $\\\bigwedge \limits_{1\leq i \leq m_{\exists}}  \forall x \exists y_1...\exists y_{k_i} 
  \big(  \hspace{0.1cm} (Kx \vee Dx) \hspace{0.1cm}\rightarrow \\ \hspace{0.1cm}
  (\,(\,\bigwedge \limits_{1\leq j \leq k_{i} } \bigvee \limits_{c\in C} U_{\delta(c)} y_j\, )
  \wedge \chi_{i}^{\exists}(x,y_1,...\, ,y_{k_i})\,) \hspace{0.1cm} \bigr)$
\item \label{POAcourt}
The next axiom ensures that there exists an isomorphic copy of $\mathfrak{C}$ in the
model considered. Let $m = min\{n,r\}$, where $r$ is the maximum arity of
relation symbols that occur in $\chi$. For each $k\in\{1,...\, , m\}$, let $\mathcal{C}_k$
denote the set of all subsets of size $k$
of the domain $C$ of $\mathfrak{C}$.
Let $\bar{\mathcal{C}_k}$ denote the set obtained from $\mathcal{C}_k$
by replacing each set $C_k\in\mathcal{C}_k$ by exactly one tuple $(c_1,...\, , c_k)$
that enumerates the elements of $C_k$ in some
arbitrarily chosen order. (Thus $|\mathcal{C}_k| = |\bar{\mathcal{C}_k}|$.)
For each tuple $(c_1,...\, ,c_k)\in\bar{\mathcal{C}_k}$, let $\beta_{[(c_1,...,c_k)]}$ denote the 
table $tb_{\mathfrak{C}}(c_1,...\, , c_k)$.
%
%
%
%
We define the required axiom as follows:
$\bigwedge\limits_{1\leq k\leq m}\bigwedge\limits_{(c_1,...,c_k)\, \in\, \bar{\mathcal{C}}_k}
\forall x_1...\forall x_{k} \big( \\ \hspace{0.1cm} (\,\bigwedge \limits_{c_j\, \in\, (c_1,...,c_k)} 
  U_{\delta(c_j)}x_j \,) 
\hspace{0.1cm}\rightarrow \hspace{0.1cm} \beta_{[(c_1,...,c_k)]}(x_1,...\, , x_{k}) \hspace{0.1cm} \big)$ \\
Note that strictly speaking the axiom ignores sets of size
greater than\ \nolinebreak $m$.
  \item \label{POAtournament}
   The relation symbol $<$ is interpreted to be a tournament:
\\ $\forall x \forall y (\, x < y \vee y < x \vee x=y \,)
  \wedge \forall x \forall y \, \neg(\,x < y \wedge y < x \, )$
%
%
%
%
%
%
%
\item \label{POAmin}
Together with the previous axiom, the \emph{first three} big conjunctions of the next axiom
imply that for all $\alpha\in\bbalpha_{\sigma}^{\bot}$ there exists  a
point in $P_{\bot}$ that realizes $\alpha$, and furthermore,
$P_{\bot}$ is true at a point $u$ iff
there exists a $1$-type $\alpha$ such
that $\alpha\in\bbalpha_{\sigma}^{\bot}$
and $u$ is the unique minimal realization of
that $1$-type. The \emph{last} big conjunction of the axiom 
implies that if $\alpha\in\bbalpha_{\sigma,s}^-\cap\, \bbalpha_{\sigma}^{\bot}$
for some $s\in\{1,...\, , N\}$, then there exists a point $u'$ 
which is the minimal realization of $\alpha$ and satisfies $U_s$:\\
$\bigwedge\limits_{\alpha\, \in\, \bbalpha_{\sigma}
\setminus\bbalpha_{\sigma}^{\bot}}\forall x\, \neg\, (\, \alpha(x)\wedge P_{\bot}(x)\, ) 
\\ \wedge \bigwedge\limits_{\alpha\, \in\,
\bbalpha_{\sigma}^{\bot}}\bigl( \hspace{0.1cm} \exists x(\, \alpha(x)\wedge P_{\bot}x\, ) \, \bigr) \\
\wedge\bigwedge\limits_{\alpha\, \in\, \bbalpha_{\sigma}^{\bot}}
{\forall x \forall y \bigl( \, ( \,P_{\bot}x \wedge \alpha(x) 
  \wedge \alpha(y) \wedge y\neq x \, ) \rightarrow x < y \, \bigr)} \\
\wedge\bigwedge\limits_{\alpha\, \in\, 
\bbalpha_{\sigma,s}^{-}\cap\, \bbalpha_{\sigma}^{\bot}}
\exists x (\, P_{\bot}x\wedge \alpha(x)\wedge U_s x\, )$

\blue{The above axiom could be expressed simply as:
For all $\alpha \in \bbalpha_{\sigma,s}^{-} \cap \bbalpha_{\sigma}^{\bot}$:
$\exists x ( min_{\alpha}(x) \wedge U_{s}x )$.
However, this is not in normal form, and translating it into
normal form makes it look somewhat cumbersome.}
\item \label{POAmax}
The next axiom is analogous to the previous one:
$\\ \bigwedge\limits_{\alpha\, \in\, \bbalpha_{\sigma}
\setminus\bbalpha_{\sigma}^{\top}}\forall x\, \neg\, (\, \alpha(x)\wedge P_{\top}(x)\, )\\
\wedge\bigwedge\limits_{\alpha\, \in\,
\bbalpha_{\sigma}^{\top}}\bigl(\hspace{0.1cm} \exists x(\, \alpha(x)\wedge P_{\top}x \, ) \, \bigr) \\
\wedge\bigwedge\limits_{\alpha\, \in\, \bbalpha_{\sigma}^{\top}}
\forall x \forall y \bigl( \, (\, P_{\top}x \wedge \alpha(x) 
  \wedge \alpha(y) \wedge y\neq x \, ) \rightarrow x > y \, \bigr) \\
\wedge \bigwedge\limits_{\alpha\, \in\, 
\bbalpha_{\sigma,s}^{+}\cap\, \bbalpha_{\sigma}^{\top}}
\exists x (\, P_{\top}x\wedge \alpha(x)\wedge U_s x)\, $

\blue{Or equivalently but not in normal form:
For all $\alpha \in \bbalpha_{\sigma,s}^{+} \cap \bbalpha_{\sigma}^{\bot}$:
$\exists x ( max_{\alpha}(x) \wedge U_{s}x )$.}
%
%
%
%
%
%
%
%
%
\end{enumerate}
The last two axioms below are
technical assertions about the predicates $U_s$, the relation $<$
and $1$-types. The significance of these axioms becomes
clarified in the related proofs.
\begin{enumerate}\setcounter{enumi}{14}
  \item \label{POAquasiorder}$\bigwedge \limits_{s,t\in\{1,...,N\},\, s<t} \forall x \forall y\big(\, (\,U_{s}x
   \wedge U_{t}y\,) \hspace{0.1cm} \rightarrow \hspace{0.1cm} x < y \, \big)$
  \item \label{POAadmissibility4}
$\bigwedge \limits_{s\, \in\, \{1,...,N\} \setminus img(\delta) }
  \bigwedge \limits_{\alpha\, \in\, \bbalpha_{\sigma, s}^{+}}\bigwedge\limits_{\alpha'\, \in\, \bbalpha_{\sigma, s}}
 \exists x \exists y \bigl( \\ \, \alpha(x) \wedge \alpha'(y) \wedge U_{s}x \wedge U_{s}y \wedge y
   < x \, \bigr)$
%
%
%
%
%
%
\end{enumerate}
%
%
%
%
%
%
We denote the conjunction of the above 16 pseudo-ordering axioms over $\sigma'$ for
the admissibility tuple $\Gamma_{\chi}$ by $\mathit{Ax}(\Gamma_{\chi})$.
We note that $\mathit{Ax}(\Gamma_{\chi})$ is a normal form
sentence of $\mathrm{U}_1$ over the vocabulary $\sigma'$ which expands
the vocabulary $\sigma$ of $\chi$.
The formulae $\mathit{Ax}(\Gamma_{\chi})$ play a central role in
the reduction of ordered satisfiability to
standard satisfiability based on Lemma \ref{DecidabilityLemma2}. 
%
%
%
%
%
%
%
%
\begin{lemma}\label{thefact}
Let $\varphi$ be a normal form $\mathrm{U}_1$-sentence
with the set $\tau$ of relation symbols, $<\in\tau$.
Let $\mathfrak{A}\models\varphi$ be a $\tau$-model.
Let $\mathfrak{C}$ be a court of\, $\mathfrak{A}$ w.r.t. $\varphi$
and $\mathfrak{A}'$ a $3$-cloning extension of\, $\mathfrak{A}$ w.r.t. $\varphi$.
Let $\Gamma_{\varphi}^{\mathfrak{C},\mathfrak{A},\mathfrak{A}'}$ be a
canonical admissibility tuple for $\mathfrak{A}'$ w.r.t. $(\mathfrak{C},
\mathfrak{A},\varphi)$
and $N$ the index of\, $\Gamma_{\varphi}^{\mathfrak{C},\mathfrak{A},\mathfrak{A}'}$.
Then the $\tau$-model $\mathfrak{A}'$ has an expansion $\mathfrak{A}''$ to
the vocabulary $\tau\cup\{K,D,P_{\bot},P_{\bot}\}\cup\{U_s\, |\, 1\leq s \leq N\}$
such that $\mathfrak{A}''\models
\mathit{Ax}(\Gamma_{\varphi}^{\mathfrak{C},\mathfrak{A},\mathfrak{A}'})$.

\begin{proof}
Recall the $\tau$-sentence $\varphi$ and
the $\tau$-model $\mathfrak{A}\models\varphi$ fixed in Section \ref{Analysing}.
Recall also the $3$-cloning
extension $\mathfrak{A}'$ of $\mathfrak{A}$ w.r.t. $\varphi$
and the court $\mathfrak{C}$ of $\mathfrak{A}$ w.r.t. $\varphi$
fixed in that Section.
Let $\Gamma_{\varphi}^{\mathfrak{C},
\mathfrak{A},\mathfrak{A}'}$ be 
the canonical admissibility tuple of $\mathfrak{A}'$ w.r.t.
$(\mathfrak{C},\mathfrak{A},\varphi)$.
Note that by Lemma \ref{technlemma}, we
have $\Gamma_{\varphi}^{\mathfrak{C},
\mathfrak{A},\mathfrak{A}'}\in\hat{\Gamma}_{\varphi}$, 
and furthermore, $\Gamma_{\varphi}^{\mathfrak{C},\mathfrak{A},\mathfrak{A}'}$ is
admissible for each $\mathcal{K}\in\{\mathcal{O},\mathcal{WO},\mathcal{O}_{fin}\}$
such that $\mathfrak{A}\in\mathcal{K}$.
%
%
We will show that $\mathfrak{A}'$ has an 
expansion $\mathfrak{A}''$ such that $\mathfrak{A}''\models
\mathit{Ax}(\Gamma_{\varphi}^{\mathfrak{C},\mathfrak{A},\mathfrak{A}'})$.
As $\varphi$, $\mathfrak{A}$, $\mathfrak{A}'$ and $\mathfrak{C}$
%
%
%
were fixed arbitrarily, this proves the
current lemma ( Lemma \ref{thefact}).
Let $N$ be the index of $\Gamma_{\varphi}^{\mathfrak{C},\mathfrak{A},\mathfrak{A}'}$, in
other words, $N$ is the
size of the family $(I_s)_{1\leq s\leq N}$ of intervals
fixed in Section \ref{intervalpartitionsection}.
Thus we now must prove that $\mathfrak{A}'$
has an expansion $\mathfrak{A}''$ to 
the vocabulary $\tau\cup\{K,D,P_{\bot},P_{\bot}\}\cup\{U_s\, |\, 1\leq s \leq N\}$
such that $\mathfrak{A}''\models \mathit{Ax}(
\Gamma_{\varphi}^{\mathfrak{C},\mathfrak{A},\mathfrak{A}'})$.
We let $\mathfrak{A}''$ be the expansion of $\mathfrak{A}'$
obtained by interpreting the extra predicates $\{K,D,P_{\bot}, P_{\top} \} \cup
\{\, U_{s}\mid 1\leq s \leq N\}$ as follows.
%
%
\begin{enumerate}
\item
$K^{\mathfrak{A}''}$ and $D^{\mathfrak{A}''}$ are
defined as $K$ and $D$ in the Section \ref{courtsection}, respectively.
Thus $K^{\mathfrak{A}''}\subseteq A$ is the set of
kings in $\mathfrak{A}'$ (and $\mathfrak{A}$)
and $D^{\mathfrak{A}''}\subseteq A$ is a set that contains,
for every pair $(\alpha,\varphi_i^{\exists})$ that has a free witness
structure in $\mathfrak{A}$, the free part of at least
one such witness structure (cf. Section \ref{courtsection}).
\item
$P_{\bot}^{\mathfrak{A}''}$ is defined to satisfy the 
pseudo-ordering axiom \ref{POAmin}; we let $P_{\bot}^{\mathfrak{A}''}$ be
true at a point $u$ iff there is some $1$-type $\alpha$ such that $u$ is
the minimal realization of $\alpha$. $P_{\top}^{\mathfrak{A}''}$ is
defined analogously to satisfy axiom \ref{POAmax}.
\item
Each predicate $U_s^{\mathfrak{A}''}$ is
defined to be the interval $I_s\subseteq A'$
identified in Section \ref{intervalpartitionsection}.
\end{enumerate}
Next we show that $\mathfrak{A}''
\models\mathit{Ax}(\Gamma_{\varphi}^{\mathfrak{C},\mathfrak{A},\mathfrak{A}'})$.
As it is easy to see that $\mathfrak{A}''$ satisfies axioms 1-7 and 9-16,
it suffices to show that $\mathfrak{A}''$ satisfies axiom \ref{POArunningfreeyeah}.
Recalling the definition of $D^{\mathfrak{A}''}$,
this can clearly be done by proving the following claim.
(Recall (cf. Section \ref{admissibilitytuplessection}) that 
$F$ is the set of those pairs $(\alpha,\varphi_i^{\exists})$
that have a free witness structure in $\mathfrak{A}'$.)

$\bm{Claim}$: $\mathfrak{A}$ has a free witness structure for a pair $(\alpha,\varphi_{i}^{\exists})$
iff $(\alpha,\varphi_{i}^{\exists}) \in F$.

As $\mathfrak{A}'$ is a $3$-cloning extension of $\mathfrak{A}$, 
it is clear that $\mathfrak{A}'$ 
has a free witness structure for a pair $(\alpha,\varphi_{i}^{\exists})$ if $\mathfrak{A}$ has.
Suppose now that for some $a \in A'$, $\mathfrak{A}'$ has a free witness structure 
$\mathfrak{A}'_{a,\varphi_{i}^{\exists} }$ for some $(\alpha,\varphi_{i}^{\exists}) \in F$
and $\mathfrak{A}$
does not have a free witness structure for this pair.
Let $\mathfrak{A}'_{a,\varphi_{i}^{\exists} } \models \varphi_{i}^{\exists}(a,a_1,...\, ,a_{k_i})$ for some points
$a_1,...\, ,a_{k_i} \in A'$, which are not necessarily distinct.
Let $u_1,...\, ,u_{l} \in (A'_{a,\varphi_{i}^{\exists}} \setminus \{a\})$ be the distinct points 
forming the live part of $\mathfrak{A}'_{a,\varphi_{i}^{\exists} }$.
Thus some points $a_1,...\, ,a_{k'} \in (A'_{a,\varphi_{i}^{\exists} }
\setminus \{u_1,...\, ,u_{l}\})$
together with  $a$ form the dead part of $\mathfrak{A}'_{a,\varphi_{i}^{\exists} }$.

The table $tb_{\mathfrak{A}'}( u_1,...\, ,u_{l})$ has
been defined either in the \emph{cloning stage} or the
\emph{completion stage} to be $tb_{\mathfrak{A}}( b_1,...\, ,b_{l})$
for some distinct elements $b_1,...\, ,b_{l}\in A$.
Furthermore, since $\mathfrak{A}'$ and $\mathfrak{A}$
have exactly the same number of realizations of each royal $1$-type 
and since both models have at least $n\geq k_{i}+1$ realizations of
each pawn, it is easy to define an
injection $f$ from $A'_{a,\varphi_{i}^{\exists}}$ into $A$
that preserves $1$-types and such that $f(u_i) = b_i$ for each $i\in\{1,...\, , l\}$.
Therefore $\mathfrak{A}'\models \varphi_{i}^{\exists}( a,a_1,...\, ,a_{k_i} )$
iff $\mathfrak{A}\models \varphi_{i}^{\exists}( f(a),f(a_1),...\, ,f(a_{k_i}) )$,
whence we have $\mathfrak{A}\models
\varphi_{i}^{\exists}( f(a),f(a_1),...\, ,f(a_{k_i}) )$.
Therefore, as $f$ is injective, we see that $\mathfrak{A}$ has a free
witness structure for $(\alpha,\varphi_i^{\exists})$.
This contradicts the assumption that $\mathfrak{A}$
does not have a free witness structure for
the pair $(\alpha,\varphi_{i}^{\exists})$.
%
%
%
%
%
%
%
%
%
\end{proof}
\end{lemma}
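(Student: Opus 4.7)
The plan is to define the expansion $\mathfrak{A}''$ of $\mathfrak{A}'$ by interpreting the fresh predicates in the most natural way suggested by the constructions in Sections~\ref{courtsection} and \ref{intervalpartitionsection}, and then verify each of the sixteen pseudo-ordering axioms in turn. Concretely, I would set $K^{\mathfrak{A}''}$ to be the set of kings of $\mathfrak{A}'$ (equivalently, of $\mathfrak{A}$), $D^{\mathfrak{A}''}$ to be the set $D$ chosen during the court construction, $P_{\bot}^{\mathfrak{A}''}$ (resp.\ $P_{\top}^{\mathfrak{A}''}$) to be the set of minimal (resp.\ maximal) realizations in $\mathfrak{A}'$ of the $1$-types in $\bbalpha_{\mathfrak{A}'}^{\bot}$ (resp.\ $\bbalpha_{\mathfrak{A}'}^{\top}$), and $U_{s}^{\mathfrak{A}''} := I_{s}$ for each interval $I_{s}$ of the canonical partition of $\mathfrak{A}'$ w.r.t.\ $\mathfrak{C}$.

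Most of the axioms are then immediate from the choices above. Axiom~\ref{POAchi} holds because $\mathfrak{A}'\models\varphi$ by Lemma~\ref{DecidableLemma1}. Axioms~\ref{POApartition} and \ref{POArealizations} follow from the fact that $(I_{s})_{1\leq s\leq N}$ partitions $A'$ together with the very definition $\bbalpha_{\mathfrak{A}',s}=\{tp_{\mathfrak{A}'}(a)\mid a\in I_{s}\}$. Axiom~\ref{POAsingleton} is guaranteed because every $c\in C$ was placed in a singleton interval $I_{\delta(c)}=\{c\}$. Axioms~\ref{POApawns} and \ref{POAkings} are exactly the king/pawn dichotomy; axioms~\ref{POAkingscourt} and \ref{POAfreecourt} hold by the definitions of $K^{\mathfrak{A}''}$ and $D^{\mathfrak{A}''}$ together with $K\cup D\subseteq C$. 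Axiom~\ref{POAkingswitnesses} holds because every $c\in K\cup D$ has, by the construction of the court, a witness structure in $\mathfrak{C}$ for each $\varphi_{i}^{\exists}$, and $\mathfrak{C}\subseteq\mathfrak{A}'$. Axiom~\ref{POAcourt} is satisfied because $\mathfrak{C}=\mathfrak{A}\upharpoonright C$, the elements of $C$ are pairwise distinguished by the singleton predicates $U_{\delta(c)}$, and cloning preserves the tables defined inside $\mathfrak{A}$. Axiom~\ref{POAtournament} holds since $<$ is interpreted as a linear order on $A'$. Axioms~\ref{POAmin} and \ref{POAmax} hold by design of $P_{\bot}^{\mathfrak{A}''}$ and $P_{\top}^{\mathfrak{A}''}$; in particular the last big conjunctions correspond to the fact that a $1$-type $\alpha\in\bbalpha_{\sigma,s}^{-}\cap\bbalpha_{\sigma}^{\bot}$ has its minimal realization precisely in the interval where it first appears, namely $I_{s}$. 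Axiom~\ref{POAquasiorder} is immediate from the enumeration of the intervals, and axiom~\ref{POAadmissibility4} follows for non-singleton intervals $I_{s}$ (i.e., those with $s\notin\mathit{img}(\delta)$) because such intervals are not pinned down to a single court element, so any realization of $\alpha'\in\bbalpha_{\sigma,s}$ and of $\alpha\in\bbalpha_{\sigma,s}^{+}$ can be found inside $I_{s}$ in the required order using the cloned pawns.

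The main obstacle is axiom~\ref{POArunningfreeyeah}, which asserts that the fixed set $F$ coincides exactly with the set of pairs $(\alpha,\varphi_{i}^{\exists})$ that admit a free witness structure in $\mathfrak{A}''$. Since $\mathfrak{A}''$ and $\mathfrak{A}'$ interpret $\tau$ identically, this reduces to proving the \textbf{Claim}: $\mathfrak{A}$ has a free witness structure for $(\alpha,\varphi_{i}^{\exists})$ iff $\mathfrak{A}'$ has one (recall that $F$ was defined with respect to $\mathfrak{A}'$ in Section~\ref{admissibilitytuplessection}, while $D$ was built from $\mathfrak{A}$ in Section~\ref{courtsection}). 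The forward direction is trivial because $\mathfrak{A}\subseteq\mathfrak{A}'$. For the converse I would mimic the universal-conjunct argument from the proof of Lemma~\ref{DecidableLemma1}: given a free witness structure $\mathfrak{A}'_{a,\varphi_{i}^{\exists}}$ with live points $u_{1},\dots,u_{l}\in A'\setminus\{a\}$, the table on the live part was defined in either the cloning or the completion stage to be copied from a table $tb_{\mathfrak{A}}(b_{1},\dots,b_{l})$ on distinct elements of $A$; then, because royal types have identical numbers of realizations in $\mathfrak{A}$ and $\mathfrak{A}'$ while each pawn type has at least $n\geq k_{i}+1$ realizations in $\mathfrak{A}$, one builds a $1$-type-preserving injection $f:A'_{a,\varphi_{i}^{\exists}}\to A$ with $f(u_{j})=b_{j}$ for all $j$. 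Since $\varphi_{i}^{\exists}$ is a $\mathrm{U}_{1}$-matrix, its truth on a tuple depends only on the $1$-types of the non-live components and the full tabular data on the live components, so $\mathfrak{A}\models\varphi_{i}^{\exists}(f(a),f(a_{1}),\dots,f(a_{k_{i}}))$; as $f$ is injective and maps the live part off $a$ (because $u_{j}\neq a$ and $f$ preserves distinctness of $1$-types via injectivity), the image provides a free witness structure in $\mathfrak{A}$, closing the claim and hence the verification of axiom~\ref{POArunningfreeyeah}. Once this claim is established, all sixteen axioms are satisfied and the lemma follows.
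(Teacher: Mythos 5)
Your proposal is correct and follows essentially the same route as the paper: the same interpretation of the fresh predicates, the observation that axiom \ref{POArunningfreeyeah} is the only non-routine one, and the same injection-based argument (mirroring Lemma \ref{DecidableLemma1}) to show that $\mathfrak{A}$ has a free witness structure for $(\alpha,\varphi_i^{\exists})$ exactly when $\mathfrak{A}'$ does. The only difference is that you spell out the verification of the other fifteen axioms, which the paper dismisses as easy.
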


\section{Reducing ordered satisfiability to standard satisfiability}\label{reducing}
In this section we establish decidability of the
satisfiability problems of $\mathrm{U}_1$ over $\mathcal{O}$, $\mathcal{WO}$
and $\mathcal{O}_{fin}$. The next lemma (Lemma \ref{DecidabilityLemma2}) is
the main technical result needed for the decision procedure.
%
%
%
%
%
Note that satisfiability in the case (b) of the lemma is with respect to
general rather than ordered models. In the lemma we
assume w.l.o.g. that $\varphi$ contains $<$.
%
%
%
%
%
\begin{lemma} \label{DecidabilityLemma2}
Let $\varphi$ be a $\mathrm{U}_1$-sentence containing the symbol $<$.
Let $\mathcal{K} \in \{\mathcal{O}, \mathcal{WO}, \mathcal{O}_{fin} \}$.
The following conditions are equivalent:
\begin{itemize}
\item[(a)]$\varphi \in sat_{\mathcal{K}}(\mathrm{U}_{1})$.
\item[(b)]$\mathit{Ax}(\Gamma_{\varphi})\in sat(\mathrm{U}_{1})$ for
some admissibility tuple
$\Gamma_{\varphi}\in\hat{\Gamma}_{\varphi}$ that is admissible for $\mathcal{K}$.
\end{itemize}
\end{lemma}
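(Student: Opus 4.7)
The plan is to prove the equivalence by handling the two implications separately, with essentially all of the technical content living in $(b)\Rightarrow(a)$.

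Direction $(a)\Rightarrow(b)$: Given $\mathfrak{A}\in\mathcal{K}$ with $\mathfrak{A}\models\varphi$, I would fix a court $\mathfrak{C}$ of $\mathfrak{A}$ w.r.t.\ $\varphi$ and a $3$-cloning extension $\mathfrak{A}'$ of $\mathfrak{A}$ w.r.t.\ $\varphi$. Then Lemma \ref{DecidableLemma1} gives $\mathfrak{A}'\models\varphi$, Lemma \ref{technlemma} tells me the canonical admissibility tuple $\Gamma:=\Gamma_{\varphi}^{\mathfrak{C},\mathfrak{A},\mathfrak{A}'}$ belongs to $\hat{\Gamma}_{\varphi}$ and is admissible for $\mathcal{K}$, and Lemma \ref{thefact} supplies an expansion $\mathfrak{A}''$ of $\mathfrak{A}'$ satisfying $\mathit{Ax}(\Gamma)$; taking $\Gamma_\varphi:=\Gamma$ this is exactly (b).

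Direction $(b)\Rightarrow(a)$: Suppose $\mathfrak{B}\models\mathit{Ax}(\Gamma_\varphi)$ for some $\Gamma_\varphi\in\hat{\Gamma}_{\varphi}$ admissible for $\mathcal{K}$. The pseudo-ordering axioms make $\mathfrak{B}$ already very nearly an ordered model: axiom \ref{POAchi} gives $\mathfrak{B}\models\varphi$, axiom \ref{POAtournament} makes $<^{\mathfrak{B}}$ a tournament, axioms \ref{POApartition} and \ref{POAquasiorder} partition $B$ into the blocks $U_s^{\mathfrak{B}}$ and linearly order them among themselves, axioms \ref{POAsingleton} and \ref{POAcourt} embed a faithful copy of the court into $\mathfrak{B}$, and axiom \ref{POApawns} supplies at least $n$ realizations of every non-royal $1$-type. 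My plan is to construct an ordered model $\mathfrak{A}^*\in\mathcal{K}$ of $\varphi$ on a subset of $B$ by redefining $<$ within each block $U_s^{\mathfrak{B}}$ as a genuine linear order chosen so that the court elements keep their places, so that for every $\alpha\in\bbalpha_\sigma^\bot$ the unique $P_\bot^{\mathfrak{B}}$-element of type $\alpha$ is the minimum of its type (axiom \ref{POAmin}), symmetrically for $P_\top^{\mathfrak{B}}$ (axiom \ref{POAmax}), and so that the positional requirement of axiom \ref{POAadmissibility4} is met. The $2$-tables involving $<$ within a block are updated accordingly; every other piece of atomic information is inherited from $\mathfrak{B}$.

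To verify $\mathfrak{A}^*\models\varphi$ I would follow the pattern of the proof of Lemma \ref{DecidableLemma1}. For each universal conjunct $\forall\bar x\,\psi(\bar x)$ and each valuation $\bar a$ from $\mathfrak{A}^*$, the abundance of pawns together with the rigidity of kings (axioms \ref{POAkings}, \ref{POAkingscourt}, \ref{POApawns}) enables the construction of an injection $f$ from the elements involved into $\mathfrak{B}$ that preserves $1$-types and maps the live-variable subtuple onto a tuple realizing the same table as $\bar a$ in $\mathfrak{A}^*$; since $\mathfrak{B}\models\varphi$, the conjunct transfers. Existential conjuncts are witnessed at kings through the embedded court (axioms \ref{POAcourt} and \ref{POAkingswitnesses}) and at pawns of types $\alpha$ with $(\alpha,\varphi_i^\exists)\in F$ through the free witness structures rooted in $D^{\mathfrak{B}}$ (axiom \ref{POArunningfreeyeah}); remaining pawn cases are handled by cloning the relevant king witness. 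Membership $\mathfrak{A}^*\in\mathcal{K}$ follows from the admissibility level of $\Gamma_\varphi$: condition \ref{AdmissibilityBot} forces every $1$-type to have a minimum, securing well-foundedness needed for $\mathcal{WO}$, while the further condition \ref{AdmissibilityTop} lets each type be realized only finitely often so that $\mathfrak{A}^*$ can be taken finite for $\mathcal{O}_{fin}$.

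The main obstacle I expect is coordinating the within-block linearizations so that (i) the $P_\bot$ and $P_\top$ positional constraints are all met simultaneously, (ii) every table appearing in the rearranged $\mathfrak{A}^*$ is also realized somewhere in $\mathfrak{B}$ so that the injection argument can transfer all universal conjuncts, and (iii) the resulting order satisfies the order-theoretic property defining $\mathcal{K}$. Conditions \ref{AdmissibilityMinusalphaone} and \ref{AdmissibilityPlusalphaone}, which cap at one the number of $1$-types newly entering or exiting a given block, are the ingredient that makes the minimum/maximum placements compatible with a single linear order on the block.
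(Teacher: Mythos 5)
Your $(a)\Rightarrow(b)$ direction matches the paper exactly (court, $3$-cloning extension, Lemma \ref{technlemma}, Lemma \ref{thefact}). The problem is in $(b)\Rightarrow(a)$, where your plan to build the ordered model $\mathfrak{A}^*$ \emph{on a subset of $B$} by re-linearizing $<$ inside each block $U_s^{\mathfrak{B}}$ has a genuine gap: the model $\mathfrak{B}$ of $\mathit{Ax}(\Gamma_\varphi)$ need not have the right cardinality or order type. Since $\mathrm{U}_1$ has the finite model property and axiom \ref{POAtournament} only forces $<^{\mathfrak{B}}$ to be a tournament, $\mathfrak{B}$ may well be finite even when $\varphi$ (e.g.\ a sentence containing $\forall x\exists y\,(x<y \wedge x\neq y)$) has no finite model in $\mathcal{O}$ or $\mathcal{WO}$. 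No reordering of the elements of $B$ can then yield a model in $\mathcal{K}$; new elements must be created, in general infinitely many. This is precisely why the paper's proof abandons the domain of $\mathfrak{B}$ entirely and builds a fresh domain $A$ as an ordered sum of intervals $(I_s,<)$ assembled from $3(m_\exists+n)$-blocks, where $I_s$ has order type $\omega^*\cdot|J^-|+|J|+\omega\cdot|J^+|$ (roughly speaking) in the $\mathcal{O}$ case, $|J^-|+|J|+\omega\cdot|J^+|$ in the $\mathcal{WO}$ case, and is finite only in the $\mathcal{O}_{fin}$ case; the elements of $\mathfrak{B}$ serve only as \emph{pattern elements} from which $1$-types, witness-structure shapes and tables are imported.

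A second, related difficulty that your sketch acknowledges but does not resolve is table consistency: "updating the $2$-tables involving $<$" while "inheriting every other piece of atomic information from $\mathfrak{B}$" can create tables (a given $<$-direction combined with the other relational atoms of the pair) that are realized nowhere in $\mathfrak{B}$, which breaks the injection-based transfer of universal conjuncts. The paper avoids this by only ever assigning to an ordered pair $a_1<a_2$ in $\mathfrak{A}$ a table $tb_{\mathfrak{B}\upharpoonright\tau}(b_1,b_2)$ for which $b_1<^{\mathfrak{B}}b_2$ actually holds, with axioms \ref{POAquasiorder} and \ref{POAadmissibility4} guaranteeing that such witnesses exist in every case of the completion procedure, and by the $E$/$F$/$G$ cyclic assignment to keep the witness-structure tables from clashing. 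Your high-level verification strategy for the universal conjuncts (type-preserving injections into $\mathfrak{B}$) and the role you assign to conditions \ref{AdmissibilityMinusalphaone}--\ref{AdmissibilityTop} are in the right spirit, but without a new, appropriately infinite domain the construction cannot go through.
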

\begin{proof}
In order to prove the implication from (a) to (b),
suppose that $\varphi \in sat_{\mathcal{K}}(\mathrm{U}_{1})$.
Thus there is a structure $\mathfrak{A} \in \mathcal{K}$
such that $\mathfrak{A}\models\varphi$. As $\mathfrak{A}
\models\varphi$, there exists a
court $\mathfrak{C}$ of $\mathfrak{A}$ w.r.t. $\varphi$.
Now let $\mathfrak{A}'$ be a $3$-cloning
extension of $\mathfrak{A}$ w.r.t. $\varphi$,
and let $\Gamma_{\varphi}^{\mathfrak{C},\mathfrak{A}'\mathfrak{A}'}$ be the 
canonical admissibility tuple of $\mathfrak{A}'$ w.r.t. $(\mathfrak{C},
\mathfrak{A},\varphi)$. By Lemma \ref{technlemma}, the
canonical tuple is in $\hat{\Gamma}_{\varphi}$ and admissible for $\mathcal{K}$.
By Lemma \ref{thefact}, $\mathfrak{A}'$ has an expansion $\mathfrak{A}''$
such that $\mathfrak{A}''\models\mathit{Ax}(
\Gamma_{\varphi}^{\mathfrak{C},
\mathfrak{A},\mathfrak{A}'})$.
The proof for the direction from (b) to (a) is given in Appendix \ref{proofofreduction}.
%
%
\end{proof}
The following gives a brief description of the
decision process which is also
outlined in Figure \ref{pseudocodefig}.
A complete and rigorous treatment of related details is given in 
Appendix \ref{complexitysection} which is 
devoted to the proof of Theorem \ref{nexptimethmmaintext}.
%
%
%
%
%
%
%
%
%
%
%
%
\begin{enumerate}
\item
An input to the problem is a sentence $\psi'$ of $\mathrm{U}_1$,
which is immediately converted into a normal form
sentence $\psi$ of $\mathrm{U}_1$ (cf. Proposition \ref{PreliminariesNormalForm}).
\item
Based on $\psi$, an admissibility tuple $\Gamma_{\psi}\in\hat{\Gamma}_{\psi}$ is
guessed non-deterministically.
The size of the tuple is exponential in $|\psi|$ (cf. Lemma
\ref{th!MostImportantProofInYourLife}). It is then
checked whether the tuple is admissible for the 
class $\mathcal{K}\in\{\mathcal{O},\mathcal{WO},\mathcal{O}_{fin}\}$
whose decision problem we are considering.
\item
Based on $\Gamma_{\psi}$, the sentence $\mathit{Ax}(\Gamma_{\psi})$ is
produced. The length of $\mathit{Ax}(\Gamma_{\psi})$ is
exponential in $|\psi|$ (cf. Lemma \ref{sentencelimitlemma} in 
Appendix \ref{complexitysection}).
\item
Then a model $\mathfrak{B}$, whose description is
exponential in $|\psi|$ (cf. Lemma
\ref{ComplexityLemma1} in Appendix \ref{complexitysection}), is
guessed. It is then checked whether $\mathfrak{B}
\models\mathit{Ax}(\Gamma_{\psi})$, which can be done in
exponential time in $|\psi|$ (cf. the end of
Appendix \ref{complexitysection}).
\end{enumerate}
\begin{figure}[!ht]
\begin{algorithmic}[1]
\Procedure{Satisfiability} \empty $(\psi')$ over $\mathcal{K}$.
\Comment{\emph{The $\mathrm{U_1}$-sentence $\psi'$ is an input to the algorithm.
Here $\mathcal{K}\in\{\mathcal{O},\mathcal{WO},\mathcal{O}_{fin}\}$, so
we are outlining three procedures in parallel.}}
\State Construct a normal form sentence $\psi$ of $\mathrm{U_1}$ from $\psi'$.
Let $\tau$ be the vocabulary consisting of all the relation symbols
occurring in $\psi$.
\label{normalform}
\Comment{\emph{By Proposition \ref{PreliminariesNormalForm}, it holds that $\psi$ is satisfiable iff $\psi'$ is satisfiable.}}
\State Guess $\Gamma_{\psi} \in \hat{\Gamma}_{\psi}$ and check
that $\Gamma_{\psi}$ is an admissibility tuple admissible for $\mathcal{K}$.
\label{guess1}
\State Let $\tau':=\tau \cup \{U_s\mid s\in\{1,...\, , N\} \} \cup \{K,D,P_{\bot}, P_{\top}\}$.
Formulate the pseudo-ordering axioms for $\Gamma_{\psi}$ over $\tau'$
and let $\mathit{Ax}(\Gamma_{\psi})$ be the conjunction of these axioms.
\label{psigamma} 
\Comment{\emph{Note that $\mathit{Ax}(\Gamma_{\psi})$ is in normal form.}}
\State Guess a potential model $\mathfrak{B}$ of $\mathit{Ax}(\Gamma_\psi)$
whose size is exponentially bounded in $|\psi|$.\label{guess2}\;
\Comment{\emph{In the next lines it is
checked whether $\mathfrak{B} \models \mathit{Ax}(\Gamma_\psi)$.
Note that by Lemma \ref{DecidabilityLemma2}, if $\mathfrak{B}
\models \mathit{Ax}(\Gamma_\psi)$,
then $\psi\in\mathit{sat}_{\mathcal{K}}(\psi)$.}}
%
%
%
\ForAll{$b \in B$}
  \ForAll{existential conjuncts $\chi:=$ \newline ${\hspace{55pt}\forall x\exists y_1...\exists y_i\beta(x,y_1,...,y_i)}$ of $\mathit{Ax}(\Gamma_{\psi})$ }
    \State Guess elements $b_1',...\, ,b_l'$ in $B$ to form a witness \newline \text{ }
     {$\hspace{34pt} \text{ structure } \mathfrak{B}_{b,\chi} \text{ and }$}
    \State check whether
$\mathfrak{B} \models \beta(b,b_1',...\, , b_l')$.
\label{Eop}
  \EndFor
\EndFor
\ForAll{ universal conjuncts $\forall x_1...\forall x_{l'} 
\beta'(x_1,...\, x_{l'})$\newline \text{ } {$\hspace{39pt}  \text{of } \mathit{Ax}(\Gamma_{\psi})$} }
  \ForAll{ tuples $(b_1,...\, , b_{l'})$ of elements of $B$,}
    \State Check whether $\mathfrak{B} \models \beta'(b_1,\ldots,b_{l'})$.
  \EndFor
\EndFor
\label{Aop}
\EndProcedure
\end{algorithmic}
\caption{Solving satisfiability of $\mathrm{U_1}$
over $\mathcal{K}\in\{\mathcal{O},\mathcal{WO},\mathcal{O}_{fin}\}.$
The symbol $\triangleright$ indicates comment.}
\label{pseudocodefig}
\end{figure}

\begin{theorem}\label{nexptimethmmaintext}
Let $\mathcal{K}\in\{\mathcal{O},\mathcal{WO},\mathcal{O}_{fin}\}$.
The satisfiability problem for $\mathrm{U}_1$ over $\mathcal{K}$
is \textsc{NExpTime}-complete.
\end{theorem}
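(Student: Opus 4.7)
The plan is to prove the matching upper and lower bounds separately. The upper bound is a formalization of the nondeterministic procedure sketched in Figure \ref{pseudocodefig}, whose correctness rests on Lemma \ref{DecidabilityLemma2}. For the lower bound I would invoke Otto's classical \textsc{NExpTime}-hardness result for the satisfiability (and finite satisfiability) of $\mathrm{FO}^2$ with a single linear order from \cite{DBLP:journals/jsyml/Otto01}. Since $\mathrm{U}_1$ syntactically extends $\mathrm{FO}^2$, every $\mathrm{FO}^2(<)$-sentence is an $\mathrm{U}_1(<)$-sentence, which immediately yields hardness for $\mathcal{O}$ and $\mathcal{O}_{fin}$; hardness for $\mathcal{WO}$ follows from the finite case, since every finite linear order is a well-order.

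For the upper bound, given an input sentence $\psi'$, the procedure first produces an equisatisfiable normal form sentence $\psi$ in polynomial time via Proposition \ref{PreliminariesNormalForm}. It then nondeterministically guesses an admissibility tuple $\Gamma_{\psi}\in\hat{\Gamma}_{\psi}$ whose binary description has length exponential in $|\psi|$ by Lemma \ref{th!MostImportantProofInYourLife}, and verifies syntactically the admissibility conditions of Definitions \ref{admidefn} and \ref{admidefn2} for the chosen class $\mathcal{K}$, which is doable in exponential time. Next it builds $\mathit{Ax}(\Gamma_{\psi})$; direct inspection of the sixteen axiom schemas in Section \ref{pseudo-orderingaxiomssect} shows that its length is exponential in $|\psi|$, a fact to be recorded as Lemma \ref{sentencelimitlemma}. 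By Lemma \ref{DecidabilityLemma2}, $\psi\in sat_{\mathcal{K}}(\mathrm{U}_1)$ iff $\mathit{Ax}(\Gamma_{\psi})$ is plainly satisfiable for some admissible $\Gamma_{\psi}$, so it suffices to decide standard satisfiability of $\mathit{Ax}(\Gamma_{\psi})$ in exponential time. Using a small-model argument for normal form $\mathrm{U}_1$-sentences (Lemma \ref{ComplexityLemma1}), the procedure guesses a candidate model $\mathfrak{B}$ of size exponential in $|\psi|$ and verifies $\mathfrak{B}\models\mathit{Ax}(\Gamma_{\psi})$: for each existential conjunct and each $b\in B$ it guesses witnesses in $B$ and evaluates the quantifier-free matrix, while for each universal conjunct it iterates over all tuples of appropriate arity from $B$.

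The main technical obstacle is controlling the size of $\mathit{Ax}(\Gamma_{\psi})$ and the cost of model-checking it: several axioms are indexed by tuples of elements of $C^*$ or pairs of $1$-types drawn from $\bbalpha_{\sigma,s}$, which a priori could blow up. However, $|C^*|\leq 2|\psi|^4\cdot 2^{|\psi|}$, $|\bbalpha_{\sigma}|\leq 2^{|\psi|}$, the arities $m$ and the width $n$ are bounded by $|\psi|$, and the widths of conjuncts in $\mathit{Ax}(\Gamma_{\psi})$ remain polynomial in $|\psi|$; therefore each of the sixteen schemas contributes at most singly exponentially many formulae and each universal-conjunct iteration visits at most $|B|^n$ tuples, which is again singly exponential. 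A careful accounting of these contributions, together with standard polynomial-time evaluation of quantifier-free matrices, will establish that the whole nondeterministic procedure runs in time exponential in $|\psi|$, completing \textsc{NExpTime} membership and, with the hardness above, the theorem for all three classes $\mathcal{O}$, $\mathcal{WO}$, and $\mathcal{O}_{fin}$.
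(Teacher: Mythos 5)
Your overall plan coincides with the paper's: Otto's \textsc{NExpTime}-hardness for $\mathrm{FO}^2$ with one linear order gives the lower bound, and the upper bound is the guess-and-check procedure of Figure \ref{pseudocodefig} justified by Proposition \ref{PreliminariesNormalForm}, Lemma \ref{th!MostImportantProofInYourLife}, Lemma \ref{sentencelimitlemma} and Lemma \ref{DecidabilityLemma2}. However, there is one step where your argument, as written, would not go through, and it is precisely the step the paper spends most of its effort on.

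You write that ``a small-model argument for normal form $\mathrm{U}_1$-sentences'' lets the procedure guess a model of $\mathit{Ax}(\Gamma_{\psi})$ of size exponential in $|\psi|$. The off-the-shelf small-model theorem for $\mathrm{U}_1$ (Theorem 2 of \cite{DBLP:conf/mfcs/KieronskiK14}) bounds the model size exponentially \emph{in the length of the sentence it is applied to}. Applied to $\mathit{Ax}(\Gamma_{\psi})$, whose length is already exponential in $|\psi|$, this yields only a doubly exponential bound, which destroys the \textsc{NExpTime} membership. The missing idea is a refinement of that theorem: one must re-read the Kiero\'nski--Kuusisto construction and observe that the model it produces has size $O(m_{\exists}^2 n^2 |\bbalpha_{\mathfrak{M}}|)$, where $\bbalpha_{\mathfrak{M}}$ is the set of $1$-types actually \emph{realized}, not the set of all $1$-types over the vocabulary (this is the paper's Lemma \ref{ComplexityCorollary1}). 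One then argues separately that any model of $\mathit{Ax}(\Gamma_{\psi})$ realizes only singly exponentially many $1$-types over the extended vocabulary $\sigma'$, because the partition axiom forces each element to satisfy exactly one of the exponentially many predicates $U_s$, so $|\bbalpha_{\mathfrak{M}}| \leq N\cdot 2^{|\sigma|+4}$ rather than the doubly exponential $2^{|\sigma'|}$. Your accounting of the size of $\mathit{Ax}(\Gamma_{\psi})$ and of the model-checking cost is correct and matches the paper, but without the refined small-model bound the guessed structure $\mathfrak{B}$ cannot be assumed to have singly exponential size, and the whole upper bound collapses to \textsc{2NExpTime}.
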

\begin{proof}
The lower bound (for each of the three decision problems)
follows immediately from \cite{DBLP:journals/jsyml/Otto01}. The remaining 
part of the proof is given in Appendix \ref{complexitysection}.
\end{proof}

\section{Undecidable extensions}\label{undecidableextensions}
The satisfiability problem for $\mathrm{FO^2}$ over structures 
with three linear orders is
undecidable \cite{DBLP:conf/csl/Kieronski11}. 
%
On the other hand, while the finite satisfiability problem for $\mathrm{FO^2}$ 
over structures with two linear orders
is decidable and in \textsc{2NExpTime} \cite{zeume}, 
the general satisfiability problem 
for $\mathrm{FO^2}$ with two linear orders
(and otherwise
unrestricted vocabulary) is open. 
These results raise the question whether 
the satisfiability problem for the extension $\mathrm{U}_{1}[<_{1},<_{2}]$ 
of $\mathrm{U}_{1}$ (see Section \ref{structureclassessection}) 
over structures with two linear orders is decidable. 
We use tiling arguments to answer this question in the negative;
see Appendix \ref{appedixtiling} for a proof of the following theorem.
\begin{theorem} \label{U1twosucc}
The satisfiability problem for $\mathrm{U}_{1}[<_{1},<_{2}]$ over structures
with two built-in linear orders is undecidable.
\end{theorem}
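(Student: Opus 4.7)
The plan is to reduce from the undecidable $\mathbb{N} \times \mathbb{N}$ tiling problem. Given a finite tile system $\mathcal{T} = (T, H, V)$, I would construct a $\mathrm{U}_1[<_1, <_2]$-sentence $\varphi_{\mathcal{T}}$ that is satisfiable over structures with two linear orders iff $\mathcal{T}$ admits a tiling of $\mathbb{N}^2$.

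The intended models are built over $\mathbb{N} \times \mathbb{N}$, with $<_1$ interpreted as the row-major lexicographic order and $<_2$ as the column-major order; each element will carry a tile type via unary predicates $P_t$ ($t \in T$) with the obvious uniqueness axioms. Because $\mathrm{U}_1$ cannot directly express immediate-successor relations (one-dimensionality forbids quantifier blocks that leave two free variables), I would encode the grid structure using a fresh $4$-ary relation $G$ serving as a uniform backbone: $G(x, y, z, w)$ is intended to hold iff $x, y, z, w$ form a unit grid square with $x$ at the lower-left, $y$ its right neighbour, $z$ its upper neighbour, and $w$ the diagonal corner.

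The main axioms I would write assert that every cell is the lower-left of some $G$-square (forcing the domain to be unbounded in both directions), together with an order-consistency axiom packaged into a single existential block such as
$$\forall x\,\exists y\exists z\exists w\,\bigl(\,G(x,y,z,w)\,\wedge\, x<_1 y\,\wedge\, x<_2 z\,\wedge\, z<_1 w\,\wedge\, y<_2 w\,\bigr),$$
plus tile-matching conditions along the horizontal and vertical sides of each $G$-square (reading off $H$ and $V$ respectively), and auxiliary coherence axioms forcing adjacent $G$-squares to glue consistently. The key syntactic point is that $G(x, y, z, w)$ is a uniform $\{x, y, z, w\}$-atom, while the $<_i$-atoms, which relate different pairs of variables and would otherwise violate uniformity, are permitted because $<_1, <_2$ play the same non-uniform role as equality in $\mathrm{U}_1[<_1, <_2]$. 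This is precisely the expressive advantage of $\mathrm{U}_1$ over $\mathrm{FO}^2$ that the reduction exploits, and it is what separates the present theorem from Zeume and Harwath's decidability result for $\mathrm{FO}^2$ with two orders.

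The hard part will be ensuring that the $G$-relation really carves out a genuine infinite grid inside any model of $\varphi_{\mathcal{T}}$, rather than a spurious, cyclic, or disconnected structure. I would handle this by using the two linear orders, together with $G$, to propagate coherence: axioms asserting that whenever two $G$-squares share a vertex their $<_1$- and $<_2$-positions align, and that tile labels propagate along the implicit row- and column-successors induced by $G$. Under such axioms, completeness (every tiling of $\mathbb{N}^2$ yields a model) is routine by interpreting $<_1, <_2$ as the row-major and column-major lexicographic orders on grid cells and defining $G$ and $P_t$ canonically; soundness amounts to extracting, from any model, an $\mathbb{N}^2$-indexed family of grid cells through iterated $G$-applications starting from the $<_1 \cap <_2$-minimum element and then reading off the tile labels $P_t$ as a valid tiling.
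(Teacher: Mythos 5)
Your overall strategy --- a tiling reduction with $<_1,<_2$ interpreted as the row-major and column-major lexicographic orders on $\mathbb{N}^2$ and a uniform $4$-ary relation marking unit grid squares --- is exactly the route the paper takes (its relation is called $N$ rather than $G$). But there is a genuine gap at the crux of the argument. Your order-consistency axiom only asserts $x<_1 y$, $x<_2 z$, $z<_1 w$, $y<_2 w$; this is far too weak to make the squares glue. The paper instead forces every $H$-edge and every side of a quasi-square to be an \emph{immediate} successor in the corresponding order, via the universally quantified formula $\sigma_i(x,y,z) := x<_i y \wedge (z\leq_i x \vee y\leq_i z)$, i.e.\ no $z$ lies strictly between $x$ and $y$. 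Immediate successors are unique, and that uniqueness is what lets one \emph{derive} the confluence property ``$H$ is complete over $V$'', namely $\forall x\forall y\forall z\forall t((Hxy\wedge Vxt\wedge Vyz)\rightarrow Htz)$ --- a four-variable, non-uniform statement that $\mathrm{U}_1[<_1,<_2]$ cannot write down directly: the $H$/$V$-successors of a point must coincide with the corners of its quasi-square because both are immediate order-successors of the same elements. Your ``auxiliary coherence axioms forcing adjacent $G$-squares to glue consistently'' are left unspecified, and any direct attempt to state them would collide with the uniformity restriction; this derivation of confluence from order-rigidity is precisely the missing idea, and without it a model could contain spurious squares that skip elements, so no grid need be present.

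A second, smaller problem is your soundness extraction: you propose to start from ``the $<_1\cap<_2$-minimum element,'' but the problem is general satisfiability over linear orders, not well-orders, so no minimum need exist. It is also unnecessary: once confluence holds, Otto's lemma yields a homomorphism \emph{from} the standard grid \emph{into} the model starting at an arbitrary element, and a homomorphic image of the grid suffices to pull back a valid tiling. The completeness direction of your proposal is fine.
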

We note that $\mathrm{U}_{1}[\sim_{1},\sim_{2}]$,
where $\sim_{1}$ and $\sim_{2}$ denote built-in 
equivalence relations, is decidable and complete for $\textsc{2NExpTime}$
\cite{DBLP:conf/csl/KieronskiK15}.

\section{Conclusion}
We have shown that $\mathrm{U}_1$ is \textsc{NExpTime}-complete
over ordered, well-ordered and finite ordered structures.
To contrast these results, we have established that $\mathrm{U}_1[<_1,<_2]$ is
undecidable. 
The results here are the first results concerning $\mathrm{U}_1$
with built-in linear orders. 
Several open problems remain, e.g., investigating $\mathrm{U}_1$
with combinations of equivalence relations and linear orders. 
Such results would contribute in a natural way to the active research program
concerning $\mathrm{FO}^2$ with built-in relations and push the field
towards investigating frameworks with relation symbols of arbitrary arity.


\bibliography{lics18}

\appendix




 


%
%
%
\section{Proof of Lemma \ref{DecidabilityLemma2}} \label{proofofreduction}

Please note that the proof of Lemma \ref{DecidabilityLemma2} below spans all of
the current section,
ending at the end of Subsection \ref{completionproceduresubsection}.

\begin{proof}
The implication from (a) to (b) was 
proved in the main text, so we prove the implication from (b) to (a) here.
We deal with the three cases
$\mathcal{K} \in \{\mathcal{O}, \mathcal{WO}, \mathcal{O}_{fin}\}$
in parallel.
We let $\tau$ denote the set of relation symbols in $\varphi$.
To prove the implication from (b) to (a),
assume that $\mathfrak{B}\models\mathit{Ax}(\Gamma_{\varphi})$
for some $\tau'$-model $\mathfrak{B}$ and
some admissibility tuple

\smallskip

$\Gamma_{\varphi} = (\mathfrak{C}, 
(\blet{\alpha}_{\tau,s})_{1\leq s \leq N}, \blet{\alpha}_{\tau}^{K}, 
\blet{\alpha}_{\tau}^{\bot}, \blet{\alpha}_{\tau}^{\top}, \delta, F)
\in \hat{\Gamma}_{\varphi}$

\smallskip

\noindent
that is admissible for
the class $\mathcal{K}$.
Here $\tau' = \tau\cup\{K,D,P_{\bot},P_{\top}\}\cup\{U_s|1\leq N\leq U_s\}$.
%
%
%
%
Note that while $\mathfrak{B}$ interprets the symbol $<$,
it is not assumed to be an ordered model.
Based on $\mathfrak{B}$ and $\Gamma_{\varphi}$,
we will construct an ordered $\tau$-model $\mathfrak{A}\in\mathcal{K}$
such that $\mathfrak{A} \models \varphi$.
The construction of $\mathfrak{A}$
consists of the following (informally described) four steps; each step is
described in full detail in its own subsection below.\vspace{1mm}
\noindent
\textbf{\textit{1})} We first construct the domain $A$ of $\mathfrak{A}$ and
define a linear order $<$ over it. We also label the elements of $A$
with $1$-types in $\bbalpha_{\tau}$.
After this stage the relations of $\mathfrak{A}$ (other than $<$)
contain no tuples other than trivial
tuples, i.e., tuples $(u,...\, ,u)$ with $u$ repeated.
\vspace{0.6mm}
\noindent
\textbf{\textit{2})} We then copy a
certain substructure $\mathfrak{C}$ of $\mathfrak{B}$
into $\mathfrak{A}$; the structure $\mathfrak{C}$ is the set of 
points in $B$ that satisfy some predicate $U_s$ with $s\in\mathit{img}(\delta)$.
This step introduces fresh non-trivial tuples
into the relations of $\mathfrak{A}$.
%
%
%
\vspace{0.6mm}
\noindent
\textbf{\textit{3})} We then define a witness structure
for each element $a \in A$ and each existential 
conjunct $\varphi_{i}^{\exists}$ of $\varphi$.
As the above step, this step introduces non-trivial tuples into
the relations of $\mathfrak{A}$.\vspace{0.6mm}

\noindent
\textbf{\textit{4})} Finally, we complete the construction of $\mathfrak{A}$
by making sure that $\mathfrak{A}$ also satisfies 
all universal conjuncts $\varphi_{i}^{\forall}$ of $\varphi$.
Also this step involves introducing non-trivial tuples.
%
%
%
%

%
%

\subsection{Constructing an ordered and
labelled domain for \texorpdfstring{$\mathfrak{A}$}{modA} }\label{ordersection}
%

%
%
%
Before defining an ordered domain $(A,<)$ for $\mathfrak{A}$,
we construct an ordered set $(I_s,<)$ for each $s\in\{1,...\, , N\}$
based on the set $\bbalpha_{\tau,s}
\in (\bbalpha_{\tau,s})_{1\leq s \leq N}$ of $\Gamma_{\varphi}$.
Once we have the ordered sets defined, the 
ordered domain $(A,<)$ is defined to be the ordered sum
$(A,<) = \Sigma_{1 \leq s \leq N} (I_s,<)$, i.e., the
ordered sets $(I_s,<)$ are simply concatenated so that
the elements of $I_t$ are before the elements of $I_{t'}$ iff $t < t'$.
Thus the ordered sets $(I_s,<)$ become intervals in $(A,<)$.
However, we will not only construct an ordered domain $(A,<)$ in
the current subsection (Subsection \ref{ordersection}),
we will also label the elements of $A$ by $1$-types over $\tau$.
Thus, by the end of the current subsection, the structure $\mathfrak{A}$
will be a linearly ordered structure with the $1$-types over $\tau$ defined.
Each interval $I_s$ will be labelled such that
exactly all the $1$-types in the set $\bbalpha_{\tau,s}$
given in $\Gamma_{\varphi}$
are satisfied by the elements of $I_s$.
Let $s \in \{1,...\, , N\}$.
We now make use of 
the admissibility tuple $\Gamma_{\varphi}$ as follows.
If $\bbalpha_{\tau,s} \cap \bbalpha_{\tau}^{K} \neq \emptyset$,
then by the admissibility condition \ref{AdmissibilityCourtiers} from Definition \ref{admidefn},
we have $s = \delta(c)$ for some $c \in C$ where $C$ is the
domain of the structure $\mathfrak{C}$ from $\Gamma_{\varphi}$.
%
%
Furthermore, we infer, using the admissibility
condition \ref{AdmissibilityCourtiers}, that $\bbalpha_{\tau,\delta(c)}$ must in fact be a
singleton $\{\alpha_s\}$ such $\alpha_s = tp_{\mathfrak{C}}(c)$.
%
%
%
%
We define $I_s$ to be a singleton set, and we
label the unique element $u$ in $I_s$ by the
type $\alpha_s$ by 
defining $\mathit{lab}(u) = \alpha_s$ where $\mathit{lab}$ denotes a labelling
function $\mathit{lab}:A\rightarrow\bbalpha_{\tau}$
whose definition will become fully fixed once we have 
dealt with all the intervals $(I_s,<)$.
Having discussed the case
where $\bbalpha_{\tau,s}\cap\bbalpha_{\tau}^K\not=\emptyset$, we
assume that $\bbalpha_{\tau,s} \cap \bbalpha_{\tau}^{K} = \emptyset$.
We divide the analysis of this case into three subcases (see below)
depending on the
degree of admissibility of $\Gamma_{\varphi}$ (cf. Definition \ref{admidefn2}).
%
%
Before dealing with the cases,
we define some auxiliary ordered sets that will function as building blocks
when we construct the intervals $(I_s,<)$.
Fix $n$ to be the width of $\varphi$ and $m_{\exists}$ the
number of existential conjuncts in $\varphi$.
By a \textit{$3(m_{\exists} + n)$-block} we mean a finite ordered set
that consists of $3(m_{\exists} + n)$ elements.
A $3(m_{\exists} + n)$-block divides into 
into three disjoint sets that we call the \emph{$E$-part, $F$-part and $G$-part}.
Each of the parts contains $m_{\exists} + n$ consecutive elements in the block
such that the sets $E$, $F$ and $G$ 
appear in the given order.
We will define the remaining intervals $(I_s,<)$ below
using $3(m_{\exists} + n)$-blocks. For each $3(m_{\exists} + n)$-block $(U,<)$ we use,
the elements in $U$ will be labelled with a single $1$-type, i.e.,
we will define $\mathit{lab}(u) = \mathit{lab}(u')$ for all $u,u'\in U$.
Therefore we in fact (somewhat informally) talk about
about $3(m_{\exists} + n)$-blocks $(U,<)$ \textit{of $1$-type $\alpha$}.
This means that while $(U,<)$ is strictly speaking only an (unlabelled) ordered set
with $3(m_{\exists} + n)$ elements, we will ultimately set $\mathit{lab}(u) = \alpha$
for all $u\in U$.
Let $(J,<)$ be a finite, ordered set consisting of
several $3(m_{\exists} + n)$-blocks such that there is one $3(m_{\exists} + n)$-block
for each $1$-type $\alpha \in \bbalpha_{\tau,s}$ and no
other blocks; the order in
which the blocks $(U,<)$ for different $1$-types appear in $(J,<)$ is
chosen arbitrarily. 
Similarly, let $(J^{-},<)$
contain a $3(m_{\exists} + n)$-block
for each $\alpha \in \bbalpha_{\tau,s}^{-}$ in
some order and no other blocks.
Let $(J^{+},<)$ contain a block for each $\alpha \in \bbalpha_{\tau,s}^{+}$ in
some order and no other blocks. Note that $J^-$ and $J^+$ may be empty.
%
%
%
%
%
%
%
%
%
We define the ordered interval $(I_s,<)$ as follows:
\begin{enumerate}
\item
Assume $\Gamma_{\varphi}$ is admissible for $\mathcal{O}$ but not for $\mathcal{WO}$.
%
%
%
We define $(I_s,<)$ to be the
ordered set consisting of three parts $(I_s,<)_1$, $(I_s,<)_2$ and $(I_s,<)_3$ in
the given order and defined as follows.
\begin{enumerate}
\item
$(I_s,<)_1$ consists of a countably infinite number of 
copies of $(J^-,<)$ such that the different copies are
ordered as the negative integers, i.e., $(I_s,<)_1$
can be obtained by ordering $\mathbb{Z}_{\mathit{neg}}\times J^-$
lexicographically, where $\mathbb{Z}_{\mathit{neg}}$
denotes the negative integers; schematically,
%
%
%
$(I_s,<)_1 :=\ ...\cdot (J^-,<)\cdot (J^-,<)\cdot (J^-,<)$
%
%
%
%
where $``\cdot"$ denotes concatenation.
\item
$(I_s,<)_2 := (J,<)$.
\item
$(I_s,<)_3$ consists of a countably infinite number of
copies of $(J^{+},<)$ such that the different copies
are ordered as the positive integers.
\end{enumerate}
Schematically, $(I_s,<)$ is therefore the structure

\smallskip

$...\cdot (J^-,<)\cdot (J^-,<)\cdot (J,<)
\cdot (J^+,<)\cdot (J^+,<)\cdot ...$

\smallskip

\item
Assume $\Gamma_{\varphi}$ is
admissible for $\mathcal{WO}$ but not for $\mathcal{O}_{\mathit{fin}}$.
%
%
%
Again the interval $(I_s,<)$ is the concatenation of three
parts ${(I_s,<)_1}$, ${(I_s,<)_2}$, ${(I_s,<)_3}$ in that order, but
while $(I_s,<)_2$ and $(I_s,<)_3$ are the same as above,
now $(I_s,<)_1 := (J^-,<)$. Thus, $(I_s,<)$ is 
the structure

\smallskip

$(J^-,<)\cdot (J,<)
\cdot (J^+,<)\cdot (J^+,<)\cdot ...$ 

\smallskip

%
%
%
%
\item
Assume $\Gamma_{\varphi}$ is admissible for $\mathcal{O}_{fin}$.
%
%
%
In this case we define $(I_s,<)$ to be the structure%

\smallskip

$(J^{-},<)\cdot (J,<)\cdot (J^{+},<)$.

\smallskip

%
%
%
%
\end{enumerate}
%
%
%
%
%
%
%
%
%
%
Note that since  we already associated each $3(m_{\exists} + n)$-block in
each of the structures ${(J,<), (J^-,<),(J^+,<)}$ with a labelling 
with $1$-types, we have now also defined the $1$-types over
the interval $(I_s,<)$.
Therefore we have now shown how to construct an ordered domain $(A,<)$
for $\mathfrak{A}$ and also defined a labelling of $A$ with $1$-types.
\subsection{Copying \texorpdfstring{$\mathfrak{C}$}{modC} into \texorpdfstring{$\mathfrak{A}$}{modA} }
\label{copyingCsubsection}
%
%
Due to axiom \ref{POAcourt}, the structure $\mathfrak{B}$
contains an isomorphic copy $\mathfrak{C}_{\mathfrak{B}}$ of
the structure $\mathfrak{C}$ from $\Gamma_{\varphi}$, that is, $\mathfrak{B}$
has a substructure $\mathfrak{C}_{\mathfrak{B}}'$
such that $\mathfrak{C}_{\mathfrak{B}}'\upharpoonright\tau$ is
isomorphic to $\mathfrak{C}$
and $\mathfrak{C}_{\mathfrak{B}} :=
\mathfrak{C}_{\mathfrak{B}}'\upharpoonright\tau$.
The domain $C_{\mathfrak{B}}$ of $\mathfrak{C}_{\mathfrak{B}}$
is the union of the sets $U_{\delta(c)}^{\mathfrak{B}}$ for all $c \in C$;
recall that by axiom \ref{POAsingleton}, each $U_{\delta(c)}^{\mathfrak{B}}$, for $c \in C$, is a singleton.
Let $g$ be the isomorphism
from $\mathfrak{C}_\mathfrak{B}$ to $\mathfrak{C}$.
(The isomorphism is unique since $\mathfrak{C}$ is an
ordered set.)
%
%
%
%
%
%
We shall create an isomorphic
copy of $\mathfrak{C}$ into $\mathfrak{A}$ by 
introducing tuples to the relations of $\mathfrak{A}$; no 
new points will be added to $A$.
We first define an injective mapping $h$ from
$C_{\mathfrak{B}}$ to  $A$ as follows.
Let $b\in C_{\mathfrak{B}}$, and denote $\delta(g(b))$ by $s$.
Now, if $b$ realizes a
$1$-type $\alpha \in \bbalpha_{\tau,s}\cap\bbalpha^K$,
then we recall from
Section \ref{ordersection} that $I_s\subseteq A$ is a singleton interval that realizes
the type $\alpha$. We let $h$ map $b$ to the element in $I_{s} \subseteq A$.
Otherwise $b$ realizes a $1$-type $\alpha \in \bbalpha_{\tau,s}$
such that $\alpha\not\in\bbalpha^K$.
Then, by admissibility condition \ref{AdmissibilityCourtiers}, (see Definition \ref{admidefn}), $\bbalpha_{\tau,s}^{-}$
and $\bbalpha_{\tau,s}^{+}$ are empty. Therefore, using the
notation from Section \ref{ordersection}, we
have $(I_{s},<) = (J,<)$ as $J^{-}$ and $J^{+}$ are empty.
Therefore, and since $\bbalpha_{\tau,s}$ is a singleton (by admissibility condition
\ref{AdmissibilityCourtiers}), we
observe that $(I_s,<)$ consists of a single $3(m_{\exists}+n)$-block of
elements realizing $\alpha$.
We let $h$ map $b$ to the first element in $I_{s}\subseteq A$.
Denote the set $img(h)$ by $C_{\mathfrak{A}}$.
Hence $h$ is a bijection from $C_{\mathfrak{B}}$ onto $C_{\mathfrak{A}}$
that preserves $1$-types over $\tau$.
Due to the construction of the order $<^{\mathfrak{A}}$ and 
axiom \ref{POAquasiorder}, it is easy to see that $h$ also
preserves order,  i.e.,
we have $b < b' \text{ iff } h(b) < h(b')$ for all $b,b' \in C_{\mathfrak{B}}$.
Now let $r'$ denote the highest arity of the relation symbols in $\varphi$.
Let $\{b_1,\ldots,b_j\}\subseteq
C_{\mathfrak{B}}$ be a set with $j\in \{2,...\, , r'\}$ elements.
We define
%
$$tb_{\mathfrak{A}}(h(b_1),...\, , h(b_j))
:= tb_{\mathfrak{B}\upharpoonright\tau}(b_1,...\, , b_j)$$
%
%
%
and repeat this for each subset of $C_{\mathfrak{A}}$ of size from $2$ up to $r'$.
By construction, $h$ is an isomorphism
from $\mathfrak{C}_{\mathfrak{B}}
\upharpoonright\tau$ to $\mathfrak{C}_\mathfrak{A}$.
%
%

%
%
%
%
%
%
%
%
%
\subsection{Finding witness structures}\label{witnessstructuresection}
Recalling the function $h$ from the previous section, we define
$K_{\mathfrak{A}} := \{h(k)\mid k \in K^{\mathfrak{B}}\}$ and 
$D_{\mathfrak{A}} := \{h(d)\mid d \in D^{\mathfrak{B}}\}$.
By axiom \ref{POAfreecourt}
and due to the definition of the domain $C_{\mathfrak{B}}$ (cf. Subsection \ref{copyingCsubsection}),
we have $(K^{\mathfrak{B}}\cup D^{\mathfrak{B}}) \subseteq C_{\mathfrak{B}}\subseteq B$.
Moreover, by axiom \ref{POAkingswitnesses} and how $C_{\mathfrak{B}}$ was defined, 
there is a witness structure in $\mathfrak{C}_{\mathfrak{B}}$ for
every $b \in (K^{\mathfrak{B}}\cup D^{\mathfrak{B}})
\subseteq C_{\mathfrak{B}}$ and every existential conjunct
$\varphi_{i}^{\exists}$ of $\varphi$.
As $\mathfrak{C}_{\mathfrak{A}}$ is isomorphic to
$\mathfrak{C}_{\mathfrak{B}}$,
there is a witness structure in $\mathfrak{C}_{\mathfrak{A}}$ for
every $a \in (K_{\mathfrak{A}}\cup D_{\mathfrak{A}}) \subseteq C_{\mathfrak{A}}$
and every conjunct $\varphi_{i}^{\exists}$ of $\varphi$.
In this section we show how to define, for each element 
$a \in A \setminus (K_{\mathfrak{A}} \cup D_{\mathfrak{A}})$ 
and each existential conjunct $\varphi_{i}^{\exists}$ of $\varphi$, a
witness structure in $\mathfrak{A}$.
This consists of the following steps, to be described in
detail later on.
\begin{enumerate}
\item
We first choose, for each $a\in A\setminus (K_{\mathfrak{A}}\cup D_{\mathfrak{A}})$, a
\textit{pattern element} $b_a$
of the same $1$-type (over $\tau$) from $\mathfrak{B}$.
\item
We then locate, for each pattern element $b_a$ and each
existential conjunct $\varphi_i^{\exists}$, a 
witness structure $\mathfrak{B}_{b_a,\varphi_i^{\exists}}$ in $\mathfrak{B}$.
\item
We then find, for each element $b'$ of the
live part $\bar{\mathfrak{B}}_{b_a,\varphi_i^{\exists}}$
of $\mathfrak{B}_{b_a,\varphi_i^{\exists}}$, a
corresponding $3(m_{\exists}+n)$-block of elements
from $\mathfrak{A}$. The elements of the block satisfy the same $1$-type as $b'$.
We denote the block by $\mathit{bl}(b')$.
\item
After this, we locate from each block $\mathit{bl}(b')$ an element
corresponding to $b'$. We then construct from these elements a 
live part $\bar{\mathfrak{A}}_{a,\varphi_i^{\exists}}$
of a witness structure for $a$ and $\varphi_i^{\exists}$.
\item
These live parts are then, at the very end of 
our procedure, completed to full witness structures by
locating suitable dead parts from $\mathfrak{A}$.
\end{enumerate}
Let $a \in A \setminus (K_{\mathfrak{A}} \cup D_{\mathfrak{A}})$
and let $s_a \in \{1,...\, , N\}$
denote the index of the interval $I_{s_a}$ such that $a\in I_{s_a}$.
Let $\alpha \in \bbalpha_{\tau,s_a} \setminus \bbalpha^{K}$
be the $1$-type of $a$ over $\tau$.
We next show how to select a pattern element $b_a$ for $a$.
The pattern element $b_a$ will be selected
from the set $U_{s_a}^{\mathfrak{B}}\subseteq B$.
%
%

%
\begin{enumerate}
\item
Firstly, if $a \in C_{\mathfrak{A}}$,
then we let $b_a := h^{-1}(a) \in C_{\mathfrak{B}}$,
where $h$ is the bijection from $C_{\mathfrak{B}}$ to $C_{\mathfrak{A}}$.
Otherwise we consider the following cases 2-4.
\item
Assume $\Gamma_{\varphi}$ is admissible for $\mathcal{O}$ but
not for $\mathcal{WO}$ (and thus not for $\mathcal{O}_{fin}$ either).
Then we let $b_a$ be an arbitrary
realization of $\alpha$ in $U_{s_a}^{\mathfrak{B}}$.
\item
Assume that $\Gamma_{\varphi}$ is admissible for $\mathcal{WO}$ but
not for $\mathcal{O}_{fin}$.
Then, if $\alpha\not\in\bbalpha_{\tau,s_a}^{-}$, we let $b_a$ 
be an arbitrary realization of $\alpha$ in $U_{s_a}^{\mathfrak{B}}$.
If $\alpha \in \bbalpha_{\tau,s_a}^{-}$,
we let $b_a$ be the element in $U_{s_a}^{\mathfrak{B}}$ that satisfies
$min_{\alpha}(x)$;
this is possible due to admissibility condition
\ref{AdmissibilityBot} and axiom \ref{POAmin}.
\item
Assume $\Gamma_{\varphi}$ is admissible for $\mathcal{O}_{fin}$.
Now, if we have {$\alpha \not\in \bbalpha_{\tau,s_a}^{-}\cup\,
\bbalpha_{\tau,s_a}^{+}$},
we let $b_a$ be an arbitrary realization of $\alpha$ in $U_{s_a}^{\mathfrak{B}}$.
If $\alpha \in \bbalpha_{\tau,s_a}^{-}\setminus\bbalpha_{\tau,s_a}^{+}$, 
then we let $b_a$ be the element in $U_{s_a}^{\mathfrak{B}}$ that
satisfies $min_{\alpha}(x)$,
which is possible due to the
admissibility condition \ref{AdmissibilityBot} and
axiom {\ref{POAmin}}. If $\alpha \in
\bbalpha_{\tau,s_a}^{+}\setminus\bbalpha_{\tau,s_a}^{-}$,
then we let $b_a$ be the element in $U_{s_a}^{\mathfrak{B}}$ that
satisfies $max_{\alpha}(x)$, which is possible due to
the admissibility condition \ref{AdmissibilityTop}
and axiom {\ref{POAmax}}.
Finally, if $\alpha \in \bbalpha_{\tau,s_a}^{-}\cap \bbalpha_{\tau,s_a}^{+}$, then there are the following two cases:
If $a$ is not in the last $3(m_{\exists}+n)$-block in $I_{s_a}$, then we
choose $b_a$ as in the case
$\alpha \in \bbalpha_{\tau,s_a}^{-}\setminus\bbalpha_{\tau,s_a}^{+}$.
If $a$ is in the last $3(m_{\exists}+n)$-block in $I_{s_a}$, then we choose $b_a$ as in the case
$\alpha \in \bbalpha_{\tau,s_a}^{+}\setminus\bbalpha_{\tau,s_a}^{-}$.
\end{enumerate}

We have now a pattern element $b_a$ for
each $a$ in $A\setminus (K_{\mathfrak{A}}\cup D_{\mathfrak{A}})$.
%
%
%
%
%
%
%
%
Let $a$ denote an
arbitrary  element in $A\setminus (K_{\mathfrak{A}}\cup D_{\mathfrak{A}})$ and
let $\varphi_{i}^{\exists}$ be an arbitrary existential conjunct of $\varphi$.
By axiom \ref{POAchi}, we have $\mathfrak{B} \models \varphi$,  
and thus we find a witness structure $\mathfrak{B}_{b_a,\varphi_{i}^{\exists}}$
in $\mathfrak{B}$ for the pair $(b_a,\varphi_{i}^{\exists})$.
Next we consider a number of cases based on what the
live part $\bar{\mathfrak{B}}_{b_a,\varphi_{i}^{\exists}}$ of the witness structure
$\mathfrak{B}_{b_a,\varphi_{i}^{\exists}}$ is like and how the
live part is oriented in relation to $\mathfrak{B}_{b_a,\varphi_{i}^{\exists}}$. 
In each case, we ultimately define a live part $\bar{\mathfrak{A}}_{a,\varphi_{i}^{\exists}}$
for some witness structure $\mathfrak{A}_{a,\varphi_{i}^{\exists}}$. The
dead part of the witness structure $\mathfrak{A}_{a,\varphi_{i}^{\exists}}$
will be found at a later stage of our construction. In many of the cases, the
identification of the live part $\bar{\mathfrak{A}}_{a,\varphi_{i}^{\exists}}$
requires that we first identify suitable $3(m_{\exists}+n)$-blocks $\mathit{bl}(b')$
for the elements $b'$ of $\bar{{\mathfrak{B}}}_{b_a,\varphi_{i}^{\exists}}$,
and only after finding the blocks, we identify suitable elements 
from the blocks in order to construct $\bar{\mathfrak{A}}_{a,\varphi_{i}^{\exists}}$.
%


\textbf{Case `\textit{empty live part}'} : If the
live part $\bar{\mathfrak{B}}_{b_a, \varphi_{i}^{\exists}}$
of the witness structure $\mathfrak{B}_{b_a, \varphi_{i}^{\exists}}$ is
empty, we let the live part $\bar{\mathfrak{A}}_{a, \varphi_{i}^{\exists}}$
of a witness structure for $(a,\varphi_i^{\exists})$, whose
dead part will be constructed later, be empty.
%
%
%

%
\textbf{Case `\textit{free live part}'} : Assume that $b_a$ does not belong to
the (non-empty) live part $\bar{\mathfrak{B}}_{b_a, \varphi_{i}^{\exists}}$
of the witness structure $\mathfrak{B}_{b_a, \varphi_{i}^{\exists}}$.
By axiom \ref{POArunningfreeyeah},
there is a witness structure for $(\alpha,\varphi_{i}^{\exists})$
in $\mathfrak{B}$
whose live part is in the set $D^{\mathfrak{B}}\subseteq C_{\mathfrak{B}}\subseteq B$.
Let $d_1,...\, ,d_k\in D^{\mathfrak{B}}$ be the
elements of $\bar{\mathfrak{B}}_{b_a, \varphi_{i}^{\exists}}$
(so $\bar{\mathfrak{B}}_{b_a, \varphi_{i}^{\exists}}$
contains exactly $k\geq 1$ elements).
%
%
According to axiom \ref{POArunningfreeyeah},
as $C_{\mathfrak{B}}$ and $C_{\mathfrak{A}}$ are isomorphic (via the bijection $h$),
it is clear that $tb_{\mathfrak{A}}(h(d_1),...\, ,h(d_k))
= tb_{\mathfrak{B}\upharpoonright\tau}(d_1,...\, , d_k)$.
Therefore we let $\{h(d_1),...\, , h(d_k)\}$
be the domain of the live part $\bar{\mathfrak{A}}_{a, \varphi_{i}^{\exists}}$
of a witness structure for $(a,\varphi_i^{\exists})$, whose
dead part will be constructed later;
we note that $a\not\in D_{\mathfrak{A}}$ due to our
assumption that $a\not\in K_{\mathfrak{A}}\cup D_{\mathfrak{A}},$
so $\bar{\mathfrak{A}}_{a, \varphi_{i}^{\exists}}$ is free w.r.t. $a$,
i.e., $a\not\in \bar{A}_{a, \varphi_{i}^{\exists}}$. 
%
%
%

\textbf{Case `\textit{local singleton live part}'} : Assume that $b_a$ is alone in the live part
$\bar{\mathfrak{B}}_{b_a, \varphi_{i}^{\exists}}$ of
the witness structure $\mathfrak{B}_{b_a,
\varphi_{i}^{\exists}}$, i.e., $|\bar{B}_{b_a,\varphi_{i}^{\exists}}|=1$.
We recall that $tb_{\mathfrak{A}}(a) = tb_{\mathfrak{B}\upharpoonright\tau}(b_a)$,
and we let $\{a\}$ be the
domain of the live part $\bar{\mathfrak{A}}_{a, \varphi_{i}^{\exists}}$
of a witness structure for $(a,\varphi_i^{\exists})$,
whose dead elements will be identified later.
\textbf{Case `\textit{local doubleton live part}'} : Assume that $b_a$
and some other element $b'\not=b_a$ in $B$ form
the live part $\bar{\mathfrak{B}}_{b_a, \varphi_{i}^{\exists}}$
of the witness structure $\mathfrak{B}_{b_a, \varphi_{i}^{\exists}}$.
Thus $|\bar{B}_{b_a,\varphi_{i}^{\exists}}|=2$.
Let $t_{b'}\in\{1,...\, , N\}$ be
the index such that $b'\in U_{t_{b'}}^{\mathfrak{B}}\subseteq B$.
Next we consider several subcases of the case \textit{local doubleton live part}.
%
%
%
%
%
%
%
%
%
%

%
In the following subcases 1 and 2, we assume that
$t_{b'} \neq s_{a}$; recall that $b_a\in U_{s_a}^{\mathfrak{B}}$
and $b'\in U_{t_{b'}}^{\mathfrak{B}}$.
%
%
We first note that if $t_{b'} < s_{a}$ (respectively, if $s_{a} < t_{b'}$),
then by axiom \ref{POAquasiorder}, we
have $\mathfrak{B} \models b' < b_a$ (resp., $\mathfrak{B} \models b_a < b'$).
%
%
%
%
\begin{itemize}
\item[1.]
If $b'\in C_{\mathfrak{B}}$,
%
%
%
%
then we define $tb_{\mathfrak{A}}(a,h(b'))
:= tb_{\mathfrak{B}\upharpoonright\tau}(b_a,b')$.
We note that in the special case where $a\in C_{\mathfrak{A}}$,
as we have $b'\in C_{\mathfrak{B}}$,
both elements $a$ and $h(b')$ are in $C_{\mathfrak{A}}$,
and therefore we have actually already
defined the table $tb_{\mathfrak{A}}(a,h(b'))$
when $\mathfrak{C}_{\mathfrak{B}}$ was
copied into $\mathfrak{A}$.
%
%
%
%
%
%
%
%
\item[2.]
If $b'\not\in C_{\mathfrak{B}}$,
then we select some $3(m_{\exists}+n)$-block $bl(b')$ of elements in
$I_{t_{b'}} \subseteq A$
realizing the 1-type $tp_{\mathfrak{B}\upharpoonright\tau}(b')$;
this is possible as for all $s\in\{1,...\, , N\}$,
the interval $I_s\subseteq A$ has been
constructed so that it
realizes exactly the same $1$-types over $\tau$ as the set $U_s^{\mathfrak{B}}$,
and furthermore, for the following reason: Since $b' \not\in C_{\mathfrak{B}}$, 
we have $b' \not \in K^{\mathfrak{B}}$, and
thus (by axiom \ref{POAkingscourt}) we have 
$tp_{\mathfrak{B}\upharpoonright{\tau}}(b') \not \in \bbalpha_{\tau}^{K}$,
whence it follows from the construction of the domain $A$ that the interval $I_{t_{b'}}$
contains at least one $3(m_{\exists}+n)$-block of each $1$-type realized in the interval.
With the block $\mathit{bl}(b')$ chosen, we
will later on show how to choose an element $a'\in bl(b')\subseteq A$ in
order to construct a full live part of a witness structure for $(a,\varphi_i^{\exists})$.
After that we will identify related dead elements in order to ultimately complete
the live part into a full witness structure. (Strictly speaking, rather
than seeking full definitions of witness structures, we
will always define only a table for the live part of a 
witness structure in addition to making sure that
suitable elements for the dead part can be found.)
%
%
%
%
\end{itemize}
In the following subcases 3 and 4 of the case \textit{local doubleton free-part},
we assume that $t_{b'} = s_a$, i.e., $b_a,b'\in U_{s_a}^{\mathfrak{B}}$.
It follows from axiom \ref{POAtournament}
that either $\mathfrak{B}\models b_a < b'$ or $\mathfrak{B}\models b' < b_a$
but not both.
In both subcases 3 and 4, we
locate only a $3(m_{\exists}+n)$-block $bl(b')\subseteq A$ 
of elements of $1$-type $tp_{\mathfrak{B}\upharpoonright{\tau}}(b')$.
%
%
%
%
Once again
we will only later find elements from the block $bl(b')$ in
order to identify a live part of a witness structure for $(a,\varphi_i^{\exists})$,
and after that we ultimately complete the live part to a full witness structure by
finding suitable dead elements.
Note that since $b_a$ and $b'\not= b_a$ are both
in $U_{s_a}^{\mathfrak{B}}$, the set $U_{s_a}^{\mathfrak{B}}$ is 
not a singleton and thus $U_{s_a}^{\mathfrak{B}}\cap C_{\mathfrak{B}}=\emptyset$.
Therefore, $b' \not \in K^{\mathfrak{B}}$ and by axiom \ref{POAkingscourt}, 
$tp_{\mathfrak{B}\upharpoonright{\tau}}(b') \not \in \bbalpha_{\tau}^{K}$.
Now it follows from the construction of the domain $A$ that
the interval $I_{s_a}$ contains at least one $3(m_{\exists}+n)$-block of 
1-type $tp_{\mathfrak{B}\upharpoonright{\tau}}(b')$.
\begin{itemize}
\item[3.] Assume that  $\mathfrak{B} \models b' < b_a$.
Let $\alpha'$ denote the $1$-type $tp_{\mathfrak{B}\upharpoonright\tau}(b')$ of $b'$.
If $\alpha' \not \in \bbalpha_{\tau,s_a}^{-}$, then we must have
$\alpha'\in\bbalpha_{\tau,t}$ for some $t < s_a$.
Thus, and as $tp_{\mathfrak{B}\upharpoonright{\tau}}(b') \not \in \bbalpha_{\tau}^{K}$, 
$I_t\subseteq A$ contains at least one block $bl(b')$ of elements
realizing the $1$-type $\alpha'$.
We choose the block $bl(b')$ to be the desired block to be used later.
If, on the other hand, we
have $\alpha' \in \bbalpha_{\tau,s_a}^{-}$, we proceed as follows.
%
%
\begin{enumerate}
\item[a)]
Assume $\Gamma_{\varphi}$ is admissible only for $\mathcal{O}$
and not for $\mathcal{WO}$ (and thus not for $\mathcal{O}_{fin}$ either).
Then, due to the way we have defined 
the interval $I_{s_a}\subseteq A$ and labelled its elements by $1$-types,
there exists a $3(m_{\exists}+n)$-block $bl(b')\subseteq I_{s_a}$ of
elements of type $\alpha'$
such that $bl(b')$ precedes the block in $I_{s_a}$ that contains $a$.
We appoint $bl(b')$ to be the desired block to be used later.
\item[b)]
Assume $\Gamma_{\varphi}$ is 
admissible for $\mathcal{WO}$ and not for $\mathcal{O}_{fin}$.
Assume first that $\alpha \not\in \bbalpha_{\tau,s_a}^{-}$
(where we recall that $\alpha$ is the $1$-type of $a$ and $b_a$ over $\tau$).
Since $\alpha'\in\bbalpha_{\tau,s_a}^-$ and 
$\alpha\not\in\bbalpha_{\tau,s_a}^-$, we
observe that the interval $I_{s_a}\subseteq A$ has been defined such that
there exists a $3(m_{\exists}+n)$-block $bl(b')\subseteq I_{s_a}$ of
elements of type $\alpha'$
such that $bl(b')$ precedes the block in $I_{s_a}$ that contains $a$.
We appoint $bl(b')$ to be the block to be used later.
%
%
%
%
%
%

Assume then that $\alpha \in \bbalpha_{\tau,s_a}^{-}$.
In this case, we have
chosen the pattern element $b_a$ to be 
the minimal realization of $\alpha$ in $\mathfrak{B}$.
Since $\mathfrak{B} \models b' < b_a$,
we must have $tp_{\mathfrak{B}\upharpoonright\tau}(b')\neq tp_{\mathfrak{B}
\upharpoonright\tau}(b_a)$.
Thus we must have $\alpha' = tp_{\mathfrak{B}\upharpoonright\tau}(b')
\not \in \bbalpha_{\tau,s_a}^{-}$
by the admissibility condition \ref{AdmissibilityMinusalphaone} (which states that $|\bbalpha_{\tau,s_a}^{-} | \leq 1$).
This contradicts the assumption that $\alpha'\in \bbalpha_{\tau,s_a}^{-}$, so
this case is in fact impossible and can thus be ignored.
%
%
%
\item[c.1)]
Assume $\Gamma_{\varphi}$ is admissible for $\mathcal{O}_{fin}$.
Furthermore, assume that one of the following conditions holds.
\begin{itemize}
\item[c.1.1)]$\alpha \not \in \bbalpha_{\tau,s_a}^{-}$ (but $\alpha$ may be in $\bbalpha_{\tau,s_a}^{+}$).
\item[c.1.2)]$\alpha \in \bbalpha_{\tau,s_a}^{-} \cap \bbalpha_{\tau,s_a}^{+}$
and $a$ is in the last block in $I_{s_a}$.
\end{itemize}
Now, since $\alpha'\in\bbalpha_{\tau,s_a}^-$
%
%
we observe that the interval $I_{s_a}\subseteq A$ has been defined such that
there is a $3(m_{\exists}+n)$-block $bl(b')\subseteq I_{s_a}$ of
elements of type $\alpha'$
such that $bl(b')$ precedes the block in $I_{s_a}$ that contains $a$.
We appoint $bl(b')$ to be the block to be used later.
%
%
%
%
\item[c.2)]
Now assume $\Gamma_{\varphi}$ is admissible for $\mathcal{O}_{fin}$, and
furthermore, assume that one of the following conditions holds.
\begin{itemize}
\item[c.2.1)] $\alpha \in \bbalpha_{\tau,s_a}^{-} \setminus \bbalpha_{\tau,s_a}^{+}$.
\item[c.2.2)] $\alpha \in \bbalpha_{\tau,s_a}^{-} \cap \bbalpha_{\tau,s_a}^{+}$
and $a$ is not in the
last block in $I_{s_a}$.
\end{itemize}
In these cases we have
chosen the pattern element $b_a$ to be 
the minimal realization of $\alpha$ in $\mathfrak{B}$.
Since $\mathfrak{B} \models b' < b_a$,
we must have $tp_{\mathfrak{B}\upharpoonright\tau}(b')\neq tp_{\mathfrak{B}
\upharpoonright\tau}(b_a)$.
Thus we must have $\alpha' = tp_{\mathfrak{B}\upharpoonright\tau}(b')
\not \in \bbalpha_{\tau,s_a}^{-}$
by the admissibility condition \ref{AdmissibilityMinusalphaone} (which states that $|\bbalpha_{\tau,s_a}^{-} | \leq 1$).
This contradicts the assumption that $\alpha'\in \bbalpha_{\tau,s_a}^{-}$, so
this case is in fact impossible and can thus be ignored.
\end{enumerate}
\item[4.]
Assume that $\mathfrak{B} \models b_a < b'$.
%
%
Again we let $\alpha'$
denote $tp_{\mathfrak{B}\upharpoonright\tau}(b')$.
If $\alpha' \not \in \bbalpha_{\tau,s_a}^{+}$, then
we have $\alpha'\in\bbalpha_{\tau,t}$ for some $t > s_a$.
We choose $bl(b')$ to be some block of elements
realizing the 1-type $\alpha'$ from the interval $I_{t}\subseteq A$.
If $\alpha' \in\bbalpha_{\tau,s_a}^{+}$, we
proceed as follows.
\begin{itemize}
\item[a)]
Assume that $\Gamma_{\varphi}$ is \textit{not} admissible for $\mathcal{O}_{fin}$
but \emph{is} admissible for $\mathcal{O}$
or even for $\mathcal{WO}$.
%
%
%
%
%
Then, due to the way we defined $1$-types
over the interval $I_{s_a}$, there
exists a block $bl(b')\subseteq I_{s_a}$ of type $\alpha'$
following the block that contains $a$ in $I_{s_a}$.
We appoint the block $bl(b')$ to be used later.
\item[b.1)]
Assume that $\Gamma_{\varphi}$ is admissible for $\mathcal{O}_{fin}$.
Furthermore, recall that $\alpha$ is the $1$-type of $a$ and assume that
%
%
one of the following conditions holds.
\begin{itemize}
\item[b.1.1)] $\alpha \not\in \bbalpha_{\tau,s_a}^-\cup\bbalpha_{\tau,s_a}^{+}$ 
\item[b.1.2)] $\alpha \in \bbalpha_{\tau,s_a}^{-}\setminus \bbalpha_{\tau,s_a}^{+}$
\item[b.1.3)] $\alpha \in \bbalpha_{\tau,s_a}^{-} \cap
\bbalpha_{\tau,s_a}^{+}$ and $a$ is not in the last block in $I_{s_a}$.
\end{itemize}
%
%
%
%
%
%
%
%
Now, since $\alpha'\in\bbalpha_{\tau,s_a}^+$ and due to 
admissibility condition \ref{AdmissibilityPlusalphaone} and the way we defined $1$-types
over the interval $I_{s_a}$, the last block in $I_{s_a}$ is of $1$-type $\alpha'$.
Clearly this last block comes after the block that contains $a$ in $I_{s_a}$.
We call this last block $bl(b')$ and appoint it for later use.
%
%
%
%
%
%
\item[b.2)]
Assume $\Gamma_{\varphi}$ is admissible for $\mathcal{O}_{fin}$ 
and that one of the following cases holds.
\begin{itemize}
\item[b.2.1)] $\alpha \in \bbalpha_{\tau,s_a}^{+}\setminus\bbalpha_{\tau,s_a}^-$
\item[b.2.2)] $\alpha \in \bbalpha_{\tau,s_a}^{-} \cap \bbalpha_{\tau,s_a}^{+}$
and $a$ is in the last block in $I_{s_a}$.
\end{itemize}
Then we have chosen the pattern element $b_a$ to be
the maximal realization of $\alpha$ in $B$, i.e., it satisfies $\mathit{max}_{\alpha}(x)$.
As admissibility for $\mathcal{O}_{fin}$
implies that $|\bbalpha_{\tau,s_a}^+| \leq 1$, we have $\alpha = \alpha'$.
As we have assumed that $\mathfrak{B}\models b_a<b'$,
we observe that this case is in fact impossible and can
thus be ignored. 
%
%
%
%
\end{itemize}
\end{itemize}
Now recall that when constructing the domain $A$ of $\mathfrak{A}$
using $3(m_{\exists} + n)$-blocks, we
defined the \emph{$E$-part} of a $3(m_{\exists} + n)$-block to be
the set that contains the first $(m_{\exists} + n)$ elements of the block. Similarly,
we defined the \emph{$F$-part} to be the set with the
subsequent $(m_{\exists}+n)$ elements immediately
after the $E$-part, and the $G$-part was defined to be the set
with the last $(m_{\exists}+n)$ elements.
Below, we let $E\subseteq A$ denote the union of the $E$-parts of
all the $3(m_{\exists}+n)$-blocks used in the construction of $A$.
Similarly, we let $F$ and $G$ denote the
unions of the $F$-parts and $G$-parts, respectively.
Now, in the subcases 2-4 of the case \textit{doubleton live part}, we
located a $3(m_{\exists}+n)$-block $bl(b')\subseteq A$ of
elements of type $\alpha' = tp_{\mathfrak{B}\upharpoonright\tau}(b')$.
Let $t\in\{1,...\, , N\}$ be the index of the
interval $I_t \subseteq A$ where the block $bl(b')$ is.
Next we will select an element $a'$ from $bl(b')\subseteq I_t$ 
in order to define the domain of a  live part of a
witness structure for $(a,\varphi_i^{\exists})$ in $A$;
note that in the subcase 1, such an element was already chosen.
Now, if $a \in E$, we let $a'$ be the $i$-th element
(where $i$ is the index of $\varphi_i^{\exists}$)
realizing $\alpha'$ in $F\cap \mathit{bl}(b')$.
Similarly, if $a \in F$ (respectively, if {$a \in G\cup (C_{\mathfrak{A}}
\setminus {(K_{\mathfrak{A}}\cup D_{\mathfrak{A}} ))}$\hspace{0.0mm})}, we
choose $a'$ to be the $i$-th element in $G\cap
\mathit{bl}(b')$ (resp., in $E\cap\mathit{bl}(b')$).
Then we define $tb_{\mathfrak{A}}(a,a') := tb_{\mathfrak{B}
\upharpoonright\tau}(b,b')$, thereby possibly
creating new tuples into the relations of $\mathfrak{A}$.
Now $\{a,a'\}$ is the domain of the live part of a 
witness structure for $(a,\varphi_i^{\exists})$.
Assigning 2-tables in this cyclic way prevents conflicts, 
as each pair $(a,a') \in A^2$ is considered at most once.
We then proceed to considering the case where $b_a$
and \textit{at least two other elements} in $B$ form
the live part $\bar{\mathfrak{B}}_{b_a,\varphi_{i}^{\exists}}$ 
of the witness structure $\mathfrak{B}_{b_a, \varphi_{i}^{\exists}}$.
The sets $E,F,G\subseteq A$ defined above will play a role here as well.
\textbf{Case `\textit{local large live part}'} : 
Assume indeed that the live part $\bar{\mathfrak{B}}_{b_a,\varphi_{i}^{\exists}}$
has at least three elements, i.e., $|\bar{B}_{b_a,\varphi_{i}^{\exists}}|\geq 3$.
Let $r_{1},...\, , r_{k}$ (possibly $k = 0$)
be the elements in $\bar{B}_{b_a,\varphi_{i}^{\exists}}$
that belong also to $K^{\mathfrak{B}}$,
and let $b_a, b_{1},...\, , b_{l}$ (possibly $l=0$) be
the remaining elements of $\bar{B}_{b_a,\varphi_{i}^{\exists}}$.
As $|\bar{B}_{b_a,\varphi_{i}^{\exists}}|\geq 3$, we have $k+l \geq 2$.
Now let $j\in\{1,...\, , l\}$ and identify, in an arbitrary
way, a $3(m_{\exists}+n)$-block $bl(b_j)\subseteq A$ of
elements that realize the same $1$-type as $b_j$ does.
We let $\alpha_{j}$ be
the $1$-type of $b_{j}$, i.e., $\alpha_{j} = tp_{\mathfrak{B}
\upharpoonright\tau}(b_{j})$, and we also let $t_{b_j}\in\{1,...\, ,N\}$ denote the
index of the interval where $bl(b_j)$ is.
Then, with the blocks $bl(b_j)$ chosen for each $j$,
we move on to considering the
following subcases of the case \textit{local large live part}
in order to define a live part of a
witness structure for $(a,\varphi_i^{\exists})$ in $A$.
\begin{itemize}
\item[1.] Assume $l=0$ and $a \in C_{\mathfrak{A}}$ (whence $k\geq 2$).
We let $\{a,h(r_1),...\, , \\ h(r_k)\}$ 
(where $h$ is the bijection from $C_{\mathfrak{B}}$ to $C_{\mathfrak{A}}$ we
defined above) be the domain of the desired live part.
We note that $tb_{\mathfrak{A}}(a,h(r_1),...\, ,h(r_k))$ has already been defined
when $\mathfrak{C}_{\mathfrak{B}}$ was copied into $\mathfrak{A}$.
\item[2.] Assume $l=0$ and $a \not \in C_{\mathfrak{A}}$ (whence $k\geq 2$).
Let $\{a,h(r_1),...\, , \\ h(r_k)\}$ be the domain of the desired live part and define
$$tb_{\mathfrak{A}}(a,h(r_1),...\, ,h(r_k))
:= tb_{\mathfrak{B}\upharpoonright\tau}(b_a,r_1,...\, ,r_k).$$
Note here that the mapping $h$ is
injective and $a \not \in img(h) = C_{\mathfrak{A}}$.
\item[3.] Assume $l>0$ and $a \in E$.
We will next define elements $a_1,...\, ,a_l\in A$ corresponding to $b_1,...\, ,b_l$.
We first let $a_{1}$ be the $i$-th (where $i\leq m_{\exists}$ is the
index of $\varphi_i^{\exists}$)
element in $\mathit{bl}(b_1)\cap F$.
%
%
%
%
%
Then, if $l > 1$, we define the elements $a_2,...\, ,a_l$
to be \emph{distinct} elements such that $a_{j}$ is,
%
%
for an arbitrary $p \in \{m_{\exists}+\nolinebreak 1, ...
\, , m_{\exists}+\nolinebreak n\}$, the $p$-th element in $\mathit{bl}(b_j)\cap F$.
%
%
Note that $l<n$, so it is easy to
ensure the elements $a_2,...\, ,a_l$ are distinct even if chosen from a single block.
We let $\{a,h(r_1),...\, ,\\ h(r_k),a_{1},...\, ,a_{l}\}$
be the domain of the desired live part of a witness structure, and we define
$tb_{\mathfrak{A}}(a,h(r_1),...\, ,h(r_k),a_{1},...\, , \\ a_{l}) := 
tb_{\mathfrak{B}\upharpoonright\tau}(b_a,r_1,...\, ,r_k,b_{1},...\, ,b_{l}),$
thereby possibly creating new tuples to the relations of $\mathfrak{A}$.
\item[4.] Assume $l>0$ and $a \in F$.
Then we proceed as in the previous case,
but we take the elements $a_1,...\, ,a_l$ from $G$.
Similarly, if $l>0$ and $a \in G \cup (C_{\mathfrak{A}}
\setminus (K_{\mathfrak{A}}\cup D_{\mathfrak{A}} ))$,
we take the elements $a_1,...\, ,a_l$ from $E$.
As before, we let $\{a,h(r_1),...\, ,h(r_k),a_{1},...\, , \\ a_{l}\}$
be the domain of the desired live part of a witness structure, and we then define
$tb_{\mathfrak{A}}(a,h(r_1),...\, ,h(r_k),a_{1},...\, ,a_{l}) := \\
tb_{\mathfrak{B}\upharpoonright\tau}(b_a,r_1,...\, ,r_k,b_{1},...\, ,b_{l}),$
thus again possibly creating new tuples to relations.
\end{itemize}
We have now considered several cases and defined the live part
$\bar{\mathfrak{A}}_{a,\varphi_{i}^\exists}$
of a witness structure $\mathfrak{A}_{a,\varphi_{i}^\exists}$ in
each case (or rather a table over the elements of
the live part). We next show how to
complete the definition of $\mathfrak{A}_{a,\varphi_{i}^\exists}$ by
finding a suitable dead part for it.
We have defined $\bar{\mathfrak{A}}_{a,\varphi_i^{\exists}}$ in each case so
that there is a bijection from $\bar{B}_{b_a,\varphi_i^{\exists}}\cup\{b_a\}$
onto $\bar{A}_{a,\varphi_i^{\exists}}\cup\{a\}$;
%
%
note that $b_a$ (respectively, $a$)
may or may not be part of the live part $\bar{\mathfrak{B}}_{b_a,\varphi_i^{\exists}}$
(resp., $\bar{\mathfrak{A}}_{a,\varphi_i^{\exists}}$)
depending on whether the live part is free, and it
holds that $b_a\in\bar{{B}}_{b_a,\varphi_i^{\exists}}
\Leftrightarrow a\in\bar{{A}}_{a,\varphi_i^{\exists}}$.
The task is now to extend this bijection to a map that maps injectively
from ${B}_{b_a,\varphi_i^{\exists}}$ into $A$ and
preserves $1$-types over $\tau$. This
will complete the construction of $\mathfrak{A}_{a,\varphi_i^{\exists}}$.
This is very easy to do: Note first that since $n$ is
the width of $\varphi$, we have $|{B}_{b_a,\varphi_i^{\exists}}|\leq n$.
Now recall that in $\mathfrak{A}$,
each pawn is part of some $3(m_{\exists} + n)$-block of
elements of the same $1$-type, so there
are at least $3(m_{\exists} + n)$ elements of that type in $\mathfrak{A}$.
Furthermore, the elements of $\mathfrak{B}$ 
with a $1$-type (over $\tau$) that is royal in $\mathfrak{A}$ are
all in $K^{\mathfrak{B}}\subseteq C_{\mathfrak{B}}$,
and $\mathfrak{A}$ contains the
copy $\mathfrak{C}_{\mathfrak{A}}$ of $\mathfrak{C}_{\mathfrak{B}}$ as a substructure.
Thus it is easy to extend the bijection in the required way.
\subsection{Completion procedure}\label{completionproceduresubsection}
Let $r$ be the highest arity occurring in the symbols in $\tau$
and $n$ the width of $\varphi$. Define $m:= \mathit{min}\{r,n\}$
and $k \in \{2,...\, , m\}$.
Let $S\subseteq A$ be a set with $k$-elements.
Assume that $tb_{\mathfrak{A}}(\overline{s})$
has not been defined for any $k$-tuple $\overline{s}$
enumerating the elements of $S$
when copying $\mathfrak{C}$ into $\mathfrak{A}$ and
when finding witness structures in $\mathfrak{A}$;
thus we still need to define some $k$-table for some
tuple $\overline{s}$ that enumerates the points in $S$.
We do this next.
Assume first that $k = 2$.
%
%
Assume $S = \{a_1,a_2\}$ such that $a_1 < a_2$ 
and such that $tp_{\mathfrak{A}}(a_1) = \alpha_1$ and
$ tp_{\mathfrak{A}}(a_2) = \alpha_2$.
Let $s,t \in \{1,...\, ,N\}$ be 
the indices such that $a_1 \in I_s$ and $a_2 \in I_t.$
Due to the way we constructed
the intervals of $A$ in Section \ref{ordersection}, 
we know that $\alpha_1 \in \bbalpha_{\tau,s}$ and $\alpha_2 \in \bbalpha_{\tau,t}$.
Furthermore, as $a_1 < a_2$, we know that either
$I_s$ is an interval preceding the
interval $I_t$ and thus $s < t$, or $I_s$ and $I_t$ are the same
interval and thus $s = t$.
%
%
%
%

%
%
%
If $s < t$, then by axioms \ref{POArealizations} and \ref{POAquasiorder}, we find
from $\mathfrak{B}$ a point $b_1 \in U_s^{\mathfrak{B}}$ realizing $\alpha_1$
and  a point $b_2 \in U_t^{\mathfrak{B}}$ realizing $\alpha_2$
such that $b_1 <^{\mathfrak{B}} b_2$.
We set $tb_{\mathfrak{A}}(a_1,a_2) := tb_{
\mathfrak{B}\upharpoonright\tau}(b_1,b_2)$.
Now assume that $s = t$. We consider
the two cases where $\alpha_2\not\in\bbalpha_{\tau,s}^{+}$
and $\alpha_2\in\bbalpha_{\tau,s}^{+}$.
If $\alpha_2 \not \in \bbalpha_{\tau,s}^{+}$, then
there is some $ t' \in \{1,...\, ,N\}$ such that $s< t'$ and $\alpha_2 \in \bbalpha_{\tau,t'}$.
Thus, again by axioms \ref{POArealizations} and
\ref{POAquasiorder}, we find from $\mathfrak{B}$ a
point $b_1 \in U_{s}^{\mathfrak{B}}$ realizing $\alpha_1$
and a point $b_2 \in U_{t'}^{\mathfrak{B}}$
realizing $\alpha_2$ such that $b_1 <^{\mathfrak{B}} b_2$.
We set $tb_{\mathfrak{A}}(a_1,a_2) := tb_{\mathfrak{B}\upharpoonright\tau}(b_1,b_2)$.
Assume then that $\alpha_2 \in \bbalpha_{\tau,s}^{+}$.
We consider the two subcases where $s\not\in \mathit{img}(\delta)$
and $s\in \mathit{img}(\delta)$;
recall the definition of $\delta$ from Section \ref{admissibilitytuplessection}.
%
%
%
If $s\not\in \mathit{img}(\delta)$, then by axioms
\ref{POArealizations} and \ref{POAadmissibility4},
there is in $\mathfrak{B}$ a point $b_1 \in U_s^{\mathfrak{B}}$ realizing $\alpha_1$
and a point $b_2 \in U_s^{\mathfrak{B}}$
realizing $\alpha_2$ such that $b_1 <^{\mathfrak{B}} b_2$.
Once again we set $tb_{\mathfrak{A}}(a_1,a_2) := tb_{
\mathfrak{B}\upharpoonright\tau}(b_1,b_2)$.
If $s\in \mathit{img}(\delta)$, then,
by admissibility condition \ref{AdmissibilityCourtiers},
either $I_s$ is a singleton with an
element with a royal type or $\bbalpha_{\tau,s}^-=\emptyset =\bbalpha_{\tau,s}^+$.
If $I_s$ is a singleton, then the assumption $a_1 < a_2$ fails,
so we must have $\bbalpha_{\tau,s}^-=\emptyset =\bbalpha_{\tau,s}^+$.
Thus the assumption $\alpha_2\in\bbalpha_{\tau,s}^+$ fails, and thus
this case is in fact impossible and can thus be ignored.
%
%
%
%
%
%

%
%
%
Assume then that $k > 2$.
We select distinct elements $b_1,...\, , b_k$ in $B$
such that $tp_{\mathfrak{A}}(a_i) = tp_{\mathfrak{B}\upharpoonright\tau}(b_i)$
for each $i\in\{1,...\, ,k\}$; 
this is possible because every king 
of $\mathfrak{A}$ is in $C_{\mathfrak{A}}$ and
thus there exists a corresponding point in $C_\mathfrak{B}$, and furthermore,
by axiom \ref{POApawns},
for each pawn $u$ of $\mathfrak{A}$,
there exist at least $n\geq k$ points of the $1$-type (over $\tau$) of $u$ in $\mathfrak{B}$.
Now we set $tb_{\mathfrak{A}}(a_1,...\, ,a_k) := tb_{
\mathfrak{B}\upharpoonright\tau}(b_1,...\, ,b_k).$
Finally, if the maximum arity $r$ of relations in $\tau$ is greater than $n$,
then the tables of $\mathfrak{A}$ over sets with more than $n$ elements
are defined arbitrarily. The model $\mathfrak{A}$ is now fully defined.
To finish the proof of Lemma \ref{DecidabilityLemma2},
we argue that $\mathfrak{A}\models\varphi$.
The fact that $\mathfrak{A}$ satisfies all the existential
conjuncts of $\varphi$ was established in Section \ref{witnessstructuresection}.
To see that $\mathfrak{A}$ satisfies also the universal conjuncts,
consider such a conjunct $\forall x_1...\forall x_k \psi(x_1,...\, ,x_k)$,
and let $(a_1,...\, ,a_k)$ be a tuple of elements from $A$, with possible repetitions.
We must show that $\mathfrak{A}\models\psi(a_1,...\, ,a_k)$.
Let $\{u_1,...\, ,u_{k'}\} := \mathit{live}(\psi(x_1,...\, ,x_k)[a_1,...\, ,a_k])$, and
let $V := \{a_1,...\, ,a_k\}\setminus \{u_1,...\, ,u_{k'}\}$.
The table $tb_{\mathfrak{A}}( u_1,...\, ,u_{k'})$ has
been defined either when finding witness structures or in the
above completion construction based on
some table $tb_{\mathfrak{B}\upharpoonright\tau}( b_1,...\, ,b_{k'})$ of distinct
elements. We now observe the following.
\begin{enumerate}
\item
All the kings of $\mathfrak{A}$ are in $C_{\mathfrak{A}}$
and thereby have corresponding elements in $C_{\mathfrak{B}}$
that satisfy the same $1$-type over $\tau$.
\item
For each pawn $u$ of $\mathfrak{A}$, there exist at
least $n$ elements of the
same $1$-type over $\tau$ as $u$ in $\mathfrak{B}$ (by axiom \ref{POApawns}).
\item
The set $V\cup\{u_1,...\, ,u_{k'}\} = \{a_1,...\, ,a_k\}$ 
has at most $n$ elements.
\end{enumerate}
Based on the above, it is easy to see that we can define an
injection $f$ from $\{u_1,...\, , u_{k'}\}\cup V$ into $B$
that preserves $1$-types (over $\tau$) and
satisfies $f(u_i) = b_i$ for each $i\in\{1,...\, , k'\}$.
Therefore $\mathfrak{A}\models\psi(a_1,...\, ,a_k)$
iff $\mathfrak{B}\models\psi(f(a_1),...\, , f(a_k))$.
Since $\mathfrak{B}\models\varphi$, we
have $\mathfrak{B}\models\psi(f(a_1),...\, , f(a_k))$
and therefore $\mathfrak{A}\models\psi(a_1,...\, ,a_k)$.
\end{proof}
%
%
%

%
%
%
%
%
%

\section{Proof of Theorem \ref{nexptimethmmaintext}}\label{complexitysection}
In this section we study the complexity of the algorithm 
outlined in Figure \ref{pseudocodefig} (in Section \ref{reducing})
and establish that it runs in \textsc{NExpTime} in
all cases $\mathcal{K}\in\{\mathcal{O},\mathcal{WO},\mathcal{O}_{fin}\}$.
We now fix some $\mathcal{K}\in\{\mathcal{O},\mathcal{WO},\mathcal{O}_{fin}\}$
and study only the algorithm for the class $\mathcal{K}$; below we
call the algorithm \emph{Algorithm 1}.
Let $\psi'$ be a $\mathrm{U}_1$-sentence given as
an input to Algorithm \ref{pseudocodefig}.
It follows from Proposition \ref{PreliminariesNormalForm} that
$\psi'$ can be translated in polynomial time in $|\psi'|$ to a
normal form sentence $\psi$
such that $\psi'$ is satisfiable in
some model  $\mathfrak{M}\in\mathcal{K}$ iff $\psi$ is
satisfiable in some expansion $\mathfrak{M}^*\in\mathcal{K}$ of $\mathfrak{M}$.
The formula $\psi$ is the normal form sentence of $\mathrm{U}_1$
constructed at line \ref{normalform} of Algorithm \ref{pseudocodefig}.
Let $\tau$ be the vocabulary consisting of the relation
symbols in $\psi$. We assume w.l.o.g. that $<\in\tau$.
At line 3 we guess some $\Gamma_{\psi} \in \hat{\Gamma}_{\psi}$
and check that $\Gamma_{\psi}$ is indeed an admissibility tuple
admissible for $\mathcal{K}$. The length of $\Gamma_{\psi}$ is
bounded exponentially in $|\psi|$ by Lemma
\ref{th!MostImportantProofInYourLife}, and checking admissibility of $\Gamma_{\psi}$ 
for $\mathcal{K}$ can be done in polynomial time in $|\Gamma_{\psi}|$.
%
%
%
%
%
%
%

%
%
%
At line 4 we let $\tau'$ be the vocabulary
$\tau \cup \{U_s\mid s\in\{1,...\, , N\} \} \cup \{K,D,P_{\bot}, P_{\top}\}$
and formulate the conjunction $\mathit{Ax}(\Gamma_{\psi})$ of the
pseudo-ordering axioms for $\Gamma_{\psi}$ over $\tau'$.
%
%
%
%

%
\begin{lemma}\label{sentencelimitlemma}
Consider a normal form sentence $\chi$ of $\mathrm{U}_1$ and a
related admissibility tuple.
The size of the sentence $\mathit{Ax}(\Gamma_{\chi})$ is 
exponentially bounded in $|\chi|$.
\end{lemma}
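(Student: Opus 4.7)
The plan is to bound the size of each of the sixteen conjuncts forming $\mathit{Ax}(\Gamma_{\chi})$ separately, and then observe that a sum of sixteen (a constant) exponential bounds is still exponential in $|\chi|$. Before going axiom-by-axiom I would collect the basic size estimates already established in the paper: $|\bbalpha_{\sigma}|\leq 2^{|\chi|}$; the court $\mathfrak{C}^{*}$ has domain of size $|C|\leq 2|\chi|^{4}|\bbalpha_{\sigma}|$; the index satisfies $N\leq 6|\chi|^{4}|\bbalpha_{\sigma}|$; the number $m_{\exists}$ of existential conjuncts is at most $|\chi|$; and the width $n$ as well as the maximum arity $r$ of symbols in $\sigma$ are at most $|\chi|$. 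In particular $|C|$, $N$, and $|\mathit{img}(\delta)|$ are each bounded by $2^{p(|\chi|)}$ for a fixed polynomial $p$, and every single $1$-type $\alpha(x)$, every existential matrix $\chi_{i}^{\exists}$ appearing in $\chi$, and every existential or universal quantifier prefix of length at most $n$ admits a description polynomial (or at worst exponential) in $|\chi|$.

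Next I would walk through the axioms \ref{POAchi}--\ref{POAadmissibility4}. Most of them fit a common pattern: an outer conjunction indexed by a set of cardinality bounded by $N$, $|C|$, $|\bbalpha_{\sigma}|$, $m_{\exists}$, $|F|$, or a product of a constant number of these, with each conjunct being a first-order formula whose size is polynomial in $|\chi|$. Explicitly, I would verify that $|F|\leq m_{\exists}|\bbalpha_{\sigma}|\leq|\chi|\cdot 2^{|\chi|}$, so even axiom \ref{POArunningfreeyeah} is of size $|F|\cdot\mathrm{poly}(|\chi|)\leq 2^{q(|\chi|)}$ for some polynomial $q$. The tournament axiom \ref{POAtournament} has constant size, the min/max axioms \ref{POAmin}--\ref{POAmax} are indexed by $|\bbalpha_{\sigma}|$ and $N\cdot|\bbalpha_{\sigma}|$, and the quasi-order axiom \ref{POAquasiorder} is indexed by $\binom{N}{2}\leq N^{2}$ with a polynomial-sized body. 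Each of these is therefore bounded exponentially in $|\chi|$.

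The main obstacle, and the case I would argue most carefully, is axiom \ref{POAcourt} (the court axiom), which iterates over the sets $\bar{\mathcal{C}}_{k}$ of $k$-element subsets of $C$ for every $k\in\{1,\ldots,m\}$, and whose innermost formula contains the full $k$-table $\beta_{[(c_{1},\ldots,c_{k})]}$. First, $|\bar{\mathcal{C}}_{k}|\leq|C|^{k}\leq(2|\chi|^{4}\cdot 2^{|\chi|})^{|\chi|}=2^{O(|\chi|^{2})}$, which is still exponential in $|\chi|$. Second, a single $k$-table is a conjunction of signed atoms, one for each relation $R\in\sigma$ and each surjective assignment of the variables $v_{1},\ldots,v_{k}$ to the argument positions of $R$, so its length is bounded by $2|\sigma|\cdot k^{r}\leq 2|\chi|\cdot|\chi|^{|\chi|}=2^{O(|\chi|\log|\chi|)}$, again exponential. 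Multiplying these two bounds, summing over the at most $|\chi|$ values of $k$, and absorbing the $O(|\chi|)$-size quantifier prefix and the implication body, we obtain a size bound of the form $2^{r(|\chi|)}$ for axiom \ref{POAcourt} as well.

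Finally, I would combine the sixteen bounds. Since each conjunct is of size at most $2^{r(|\chi|)}$ for some polynomial $r$ (different conjuncts may require different polynomials, but we may take a common upper bound), and since there are only sixteen of them, the total description length of $\mathit{Ax}(\Gamma_{\chi})$ is bounded by $16\cdot 2^{r(|\chi|)}\leq 2^{r(|\chi|)+4}$, which is exponential in $|\chi|$, completing the argument.
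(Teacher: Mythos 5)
Your proposal is correct and follows essentially the same route as the paper's own proof: both decompose $\mathit{Ax}(\Gamma_{\chi})$ into its sixteen axioms, bound the number of conjuncts in each via the cardinalities $N$, $|C|$, $|\bbalpha_{\sigma}|$, $m_{\exists}$, $|F|$, bound the length of each individual conjunct, and single out the court axiom \ref{POAcourt} with its $k$-tables as the one case requiring the estimate $|C|^{|\chi|}\cdot|\chi|^{|\chi|}$. The only differences are cosmetic (the paper states one uniform generous bound on the conjunct count and multiplies the per-axiom bounds $\mathcal{B}_{(i)}$, whereas you sum sixteen separate bounds), so no further comment is needed.
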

\begin{proof}
Let $N$ be the index of $\Gamma_{\chi}$
and $C$ the domain of the court structure of $\Gamma_{\chi}$.
Let $\sigma$ be the
vocabulary of $\chi$.
%
Now let $\beta$ be some axiom from the list of 16 axioms that
make $\mathit{Ax}(\Gamma_{\chi})$, see Section \ref{pseudo-orderingaxiomssect}.
The sentence $\beta$ is a normal form sentence with
some number $m_{\exists,\beta}$ of
existential conjuncts and some number $m_{\forall,\beta}$ of universal conjuncts.
Now, by inspection of the pseudo-ordering axioms,
the sum $m_{\exists,\beta}+m_{\forall,\beta}$ is bounded above by
the very generous\footnote{We shall not
seek minimal or in any sense canonical bounds.
Instead we settle with "clearly sufficient" bounds.
This applies here as well as later on.} 
bound $\mathit{const}\cdot |\chi|\cdot N^2
\cdot |\bbalpha_{\sigma}|^2\cdot | C |^{|\chi|} + \mathit{const}$
for some constant \emph{const}.
%
%
Recalling from Section \ref{admissibilitytuplessection}
that $|C|\leq 2|\chi|^4|
\bbalpha_{\sigma}|$ and $N\leq 6|\chi|^4|\bbalpha_{\sigma}|$,
we get that ${m}_{\exists,\beta}+{m}_{\forall,\beta}$ is
bounded by $\mathit{const}\cdot |\chi|
\cdot (6|\chi|^4|\bbalpha_{\sigma}|)^2
\cdot |\bbalpha_{\sigma}|^2\cdot (2 |\chi|^4|\bbalpha_{\tau}| )^{|\chi|}
+\mathit{const}$. 
Since $|\bbalpha_{\sigma}|\leq 2^{|\chi|}$, it is therefore easy to see
%
%
that this bound is exponential in $|\chi|$.
%
%
%
%
%
Therefore, to conclude our proof, it suffices to
find some bound $\mathcal{B}$ exponential in $|\chi|$ 
such that the length of each existential conjunct as well as
the length of each universal conjunct in $\mathit{Ax}(\Gamma_{\chi})$ is
bounded above by $\mathcal{B}$.
%

%
To find such a bound $\mathcal{B}$, we first investigate axiom \ref{POAcourt}.
We note that each formula $\beta_{[c_1,...\, ,c_k]}(x_1,...\, , x_k)$
in axiom \ref{POAcourt} is a $k$-table and
therefore consists of a conjunction over a
set such as---to give a possible example---the one given in
Example \ref{rubbishexample}.
The \emph{number of conjuncts} in $\beta_{[c_1,...\, ,c_k]}(x_1,...\, , x_k)$ is
therefore definitely bounded above by
the bound ${|\chi|\cdot|\chi|^{|\chi|}}$.
Thus it is easy to see 
that there exists a term $\mathcal{B}_{(11)}$ exponential in $|\chi|$
such that the length of each universal conjunct of axiom \ref{POAcourt} is 
bounded above by $\mathcal{B}_{(11)}$.
To cover the existential and universal conjuncts in
the other axioms, we investigate 
each axiom individually and easily conclude that 
there exists a term $\mathcal{B}_{(i)}$ for each axiom $i\in\{1,...\, , 16\}$
such that the length of each existential and universal conjunct in
the axiom $(i)$ is bounded above by $\mathcal{B}_i$,
and furthermore, $\mathcal{B}_i$ is exponential in $|\chi|$.
By taking the product of the terms $\mathcal{B}_{(i)}$, we
find a uniform exponential bound for the length of all
existential and universal conjuncts in $\mathit{Ax}(\Gamma_{\chi})$.
\end{proof}
At line 5 of Algorithm 1 we guess a $\tau'$-model $\mathfrak{B}$
whose domain size is exponential in $|\psi|$
(rather than exponential in $|\mathit{Ax}(\Gamma_{\psi})|$); a
sufficient bound is established below (Lemma \ref{ComplexityLemma1}), and
furthermore, it is shown that not only the domain size but even the
full description of $\mathfrak{B}$ can be bounded
exponential in $|\psi|$.
(Recall that $\mathfrak{B}$ does not have to interpret the
binary relation symbol $<$ as and order.)
%
%
%
We now begin the process of finding an
exponential upper bound (in $|\psi|$) for the size of $\mathfrak{B}$
and show that this bound is indeed sufficient.
We also establish that, indeed,
the full description of $\mathfrak{B}$ likewise has a bound
exponential in $|\psi|$.
To achieve these goals, we first analyze below the proof of
Theorem \ref{ComplexityTheorem1}; this theorem is Theorem 2 in the
article \cite{DBLP:conf/mfcs/KieronskiK14}
(and Theorem 3.4 in \cite{kieroarxiv} due to different numbering).
The original proof is given in detail in Section 3 of both
\cite{DBLP:conf/mfcs/KieronskiK14} and \cite{kieroarxiv}.
We state the theorem exactly as in
\cite{DBLP:conf/mfcs/KieronskiK14} and \cite{kieroarxiv}, and
thus note that $\mathrm{UF}_{1}^{=}$
denotes $\mathrm{U}_{1}$ in the theorem.
(Note that obviously the theorem concerns general $\mathrm{U}_1$ as
opposed to $\mathrm{U}_1$ over ordered structures.)
\begin{theorem}[\cite{DBLP:conf/mfcs/KieronskiK14}]
\label{ComplexityTheorem1}
$\mathrm{UF}_{1}^{=}$ has the finite model property. 
Moreover, every satisfiable $\mathrm{UF}_{1}^{=}$-formula $\varphi$ has 
a model whose size is bounded exponentially in $|\varphi|$. 
\end{theorem}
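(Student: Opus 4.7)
The plan is to obtain the small model by the standard king-and-pawn shrinking argument familiar from the complexity analysis of $\mathrm{FO}^2$, adapted to $\mathrm{U}_1$ using the machinery already developed in Section \ref{preliminariessection}. First I would invoke Proposition \ref{PreliminariesNormalForm} and assume without loss of generality that the given satisfiable $\mathrm{U}_1$-sentence $\varphi$ is in generalized Scott normal form; let $\tau$ be its vocabulary, $n$ its width, and $m_\exists$ its number of existential conjuncts. Fix an arbitrary model $\mathfrak{M}\models\varphi$. Identify in $\mathfrak{M}$ the set $K$ of kings (at most $(n-1)|\bbalpha_\tau|$ of them, hence exponentially bounded in $|\varphi|$) and, for every pair $(\alpha,\varphi_i^\exists)$ with $\alpha\in\bbalpha_{\mathfrak{M}}$, fix one witness structure $\mathfrak{M}_{a,\varphi_i^\exists}$ with $a$ realizing $\alpha$. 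Collect all the elements appearing in these witness structures together with $K$ into a set $C_0$; its cardinality is polynomially bounded in $|\varphi|\cdot|\bbalpha_\tau|$ and hence exponentially bounded in $|\varphi|$. This $C_0$ plays the role of the \emph{court} from Section \ref{courtsection}, and its substructure of $\mathfrak{M}$ will form the skeleton of the small model.

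Next I would enlarge $C_0$ so that every non-royal 1-type realized in $C_0$ receives at least $n$ realizations in the final model. For each pawn 1-type $\alpha$ with fewer than $n$ realizations in $C_0$, add fresh pawns of type $\alpha$ (either by cloning existing pawns from $\mathfrak{M}$ or by copying them out of the ambient model) until the threshold $n$ is met. Call the resulting set $A$; still $|A|$ is bounded by $\text{const}\cdot|\varphi|^2\cdot|\bbalpha_\tau|$, which is exponential in $|\varphi|$. The structure $\mathfrak{A}$ is obtained as follows: on tuples fully contained in $C_0$ we keep the interpretation inherited from $\mathfrak{M}$; on a tuple that contains some fresh pawn clone $p'$ of an original pawn $p$, we copy tables from $\mathfrak{M}$ by replacing $p'$ with $p$ (and more generally replacing each fresh pawn by some element of $\mathfrak{M}$ of the same 1-type that is distinct from the elements already used). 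Because $\mathrm{U}_1$ is one-dimensional and uniform, each atom with two or more distinct variables that appears in $\varphi$ is only evaluated on tuples of one prescribed shape, and the substitution is made well-defined up to the maximum arity $r\leq|\varphi|$ of symbols in $\tau$; tables over larger tuples can be chosen arbitrarily.

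The last step is verification. For existential conjuncts the witness structures transplanted from $\mathfrak{M}$ into $C_0$ provide witnesses for the kings and for one representative of every pawn 1-type. For the remaining pawns we use the fact that, inside $\mathfrak{A}$, pawns of the same 1-type are fully interchangeable with their original in $\mathfrak{M}$ as far as tuples of size at most $n$ are concerned: this is exactly the substitution argument used in the proof of Lemma \ref{DecidableLemma1}, and it yields witnesses for the clones from the witnesses fixed for the originals. For universal conjuncts $\forall \bar{x}\,\psi(\bar{x})$, given any tuple $\bar a\in A^k$ one exhibits an injection $f$ from the live elements appearing in $\bar a$ into $\mathfrak{M}$ that preserves 1-types and agrees with the table definition made above; this is possible because each royal type has the same realizations in $\mathfrak{A}$ as in $\mathfrak{M}$, and each pawn type has at least $n\geq k$ realizations in $\mathfrak{M}$. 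Hence $\mathfrak{A}\models\psi(\bar a)$ transfers from $\mathfrak{M}\models\psi(f(\bar a))$.

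The main obstacle is the verification step for the universal conjuncts over tuples that simultaneously involve new pawn clones and original elements: one has to ensure that the table assigned to such a mixed tuple, which is read off from $\mathfrak{M}$ via the substitution, is consistent with all tables assigned so far, and that the consistency does not depend on the arbitrary enumeration choice of the pawn clones. The one-dimensionality of $\mathrm{U}_1$ is what ultimately resolves this difficulty, since every live subtuple appearing in a $\mathrm{U}_1$-matrix involves a single set of variables whose atoms can be read uniformly; the bookkeeping, though routine, is the technically delicate part of the argument and closely parallels the completion stage described in Section \ref{Analysing}.
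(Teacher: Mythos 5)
First, a framing point: the paper does not prove Theorem \ref{ComplexityTheorem1} at all — it is imported verbatim from \cite{DBLP:conf/mfcs/KieronskiK14}, and the only place its proof is touched is Lemma \ref{ComplexityCorollary1}, where the original construction (a small model of the form $C\cup E\cup F\cup G$, with $C$ built from the kings $K$ and the free-witness representatives $D$, and with $E,F,G$ each containing $m_\exists+n$ elements per realized $1$-type) is inspected in order to sharpen the bound to $8m_\exists^2n^2|\bbalpha_{\mathfrak{M}}|$. Your overall plan (normal form, kings, a court of fixed witness structures, padding pawn types, a type-preserving injection back into $\mathfrak{M}$ for the universal conjuncts) is the right family of ideas, and your treatment of the universal conjuncts is exactly how Lemma \ref{DecidableLemma1} and the end of Appendix \ref{proofofreduction} argue.

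The genuine gap is in the existential conjuncts, which you dismiss with the claim that pawns of the same $1$-type are ``fully interchangeable with their original'' so that witnesses for the clones come from the witnesses fixed for the originals. Interchangeability in the sense of Lemma \ref{DecidableLemma1} only runs in the \emph{extension} direction, where the original witnesses are still present in the model. In the shrinking direction the witnesses of a retained pawn $a$ may have been discarded, so you must choose new witnesses among the at most $n$ retained realizations of each type and \emph{define fresh tables} over the resulting tuples — and these definitions can clash. Concretely, if $p$ of type $\alpha$ needs a live part $\{p,q\}$ with $q$ of type $\beta$ and $2$-table $T_1$, while $q$ needs a live part $\{q,p'\}$ with $p'$ of type $\alpha$ and $2$-table $T_2$, then with only $n$ elements of each type nothing prevents the choices ``$q$ as witness for $p$'' and ``$p$ as witness for $q$'' from colliding on the same unordered pair with $T_1\neq T_2$. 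This is precisely why the construction behind Theorem \ref{ComplexityTheorem1} keeps $3(m_\exists+n)$ elements of each pawn type split into three groups $E,F,G$ and assigns witnesses cyclically ($E$ takes them from $F$, $F$ from $G$, $G$ from $E$, with the $i$-th element of the target group reserved for the $i$-th existential conjunct), so that each pair of elements is written to at most once; the same device reappears in the paper's own Subsection \ref{witnessstructuresection}. Your proposal has no such conflict-avoidance mechanism, and $n$ realizations per pawn type is in general too few to support one, so the construction as described does not go through. A secondary point: you locate the ``technically delicate part'' in the universal conjuncts, but those are the easy half — the bookkeeping problem lives entirely in the witness assignment.
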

It follows from Theorem \ref{ComplexityTheorem1}
that $\mathit{Ax}(\Gamma_{\psi})$ has a model $\mathfrak{M}$ whose
size is exponential in $|\mathit{Ax}(\Gamma_{\psi})|$,
but since $|\mathit{Ax}(\Gamma_{\psi})|$ is exponential in $|\psi|$,
the size of the model $\mathfrak{M}$ is double exponential in $|\psi|$.
This is not the desired result.
%
%
%
To lower the bound to exponential,
we now analyze the proof of
Theorem \ref{ComplexityTheorem1} given in
Section 3 of \cite{DBLP:conf/mfcs/KieronskiK14} and \cite{kieroarxiv}.
This will result in the following lemma which
follows directly and very easily from 
\cite{DBLP:conf/mfcs/KieronskiK14, kieroarxiv} but is implicit there, i.e.,
not stated as an explicit lemma.
Recall here that $\bbalpha_{\mathfrak{A}}$ denotes the $1$-types 
realized in $\mathfrak{A}$.
\begin{lemma}\label{ComplexityCorollary1}
Let $\varphi$ be a normal form sentence of\, $\mathrm{U}_{1}$.
Let $m_{\exists}> 0$ be the
number of existential conjuncts in $\varphi$.
Let $n\geq 2$ be the width of $\varphi$ and $\sigma$ the vocabulary of $\varphi$.
If $\varphi$ is satisfiable, then it is satisfiable in some model $\mathfrak{M}$
such that $|M|\leq 8m_{\exists}^2n^2\bbalpha_{\mathfrak{M}}$
where $\bbalpha_{\mathfrak{M}}\subseteq\bbalpha_{\sigma}$.
\begin{proof}
Let $\varphi$, $n\geq 2$, $\sigma$ and $m_{\exists}\not= 0$ be as specified above.
%
%
Assume $\varphi$ is satisfiable.
The claim of the current lemma follows
directly by inspection of the relatively short argument in
Section 3 of \cite{DBLP:conf/mfcs/KieronskiK14,kieroarxiv},
but we shall anyway outline here why
there exists a model $\mathfrak{M}$ with the given limit 
$8m_{\exists}^2 n^2\bbalpha_{\mathfrak{M}}$ on domain size.
%
%
%
%
%
%
%

%
Assume $\mathfrak{A}$ is a $\sigma$-model
such that $\mathfrak{A}\models\varphi$.
The original proof constructs from the $\sigma$-model\footnote{The vocabulary
used in the original proof is
denoted by $\tau$ instead of $\sigma$. We use $\sigma$
here because $\tau$ is in 'global' use by Algorithm 1.} $\mathfrak{A}$ of $\varphi$ a
new $\sigma$-model
$\mathfrak{A}'$ whose domain $A'$ 
consists of the union of four sets $C,E,F,G$, 
where the set $C$ is constructed with the help of two sets $K$ and $D$.
Now, while it is stated in \cite{DBLP:conf/mfcs/KieronskiK14}
that $|K|\leq (n-1)|\bbalpha_{\sigma}|$
and $|D|\leq (n-1)m_{\exists}|\bbalpha_{\sigma}|$, it is
straightforward to observe that in fact
$|K|\leq (n-1)|\bbalpha_{\mathfrak{A}}|$
and $|D|\leq (n-1)m_{\exists}|\bbalpha_{\mathfrak{A}}|$.
(Note that we use $\bbalpha_{\mathfrak{A}}$
instead of $\bbalpha_{\mathfrak{A}'}$ here.)
It is also easily seen that $|C|\leq n |K \cup D| m_{\exists}$,
and thus we can calculate, using the above bounds for $K$ and $D$, that
\begin{align*}
C\leq\, &n|K\cup D|m_{\exists} 
\leq\,n( (n-1)|\bblet{\alpha}_{\mathfrak{A}}| + (n-1)m_{\exists}|
\bblet{\alpha}_{\mathfrak{A}}|)m_{\exists} \\
\leq\,& (n^2|\bblet{\alpha}_{\mathfrak{A}}| + 
n^2m_{\exists}|\bblet{\alpha}_{\mathfrak{A}}|)m_{\exists}
%
\leq\,2n^2m_{\exists}^2|\bblet{\alpha}_{\mathfrak{A}}|.
\end{align*}
We then consider the sets $E,F,G$.
The article \cite{DBLP:conf/mfcs/KieronskiK14} gives a
bound $(n + m_{\exists})|\bbalpha_{\sigma}|$ for each of 
these sets, but it is immediate 
that in fact $(n + m_{\exists})|\bbalpha_{\mathfrak{A}}|$ suffices.
Putting all the above together, we calculate
\begin{align*}
|C \cup E \cup F \cup E| 
&\leq\,2n^2m_{\exists}^2|\bblet{\alpha}_{\mathfrak{A}}|
+3(n+m_{\exists})|\bblet{\alpha}_{\mathfrak{A}}| 
\\ &\leq \, 8n^2m_{\exists}^2|\bblet{\alpha}_{\mathfrak{A}}|.
\end{align*}
%
%
%
%
%
%
%
%
%
%
%
%
%
%
%
%
%
%
%
It is also immediate that $\bbalpha_{
\mathfrak{A}'}\subseteq\bbalpha_{\mathfrak{A}}$, so
the domain of $\mathfrak{A'}$,
i.e., the set $C \cup E \cup F \cup E$, is
bounded above by $8n^2m_{\exists}^2|\bblet{\alpha}_{\mathfrak{A}'}|$. 
\end{proof}
\end{lemma}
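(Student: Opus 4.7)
The plan is to inspect the small-model construction in Kieronski--Kuusisto \cite{DBLP:conf/mfcs/KieronskiK14} and track every size bound in terms of realized $1$-types rather than all $1$-types over the vocabulary. Since the statement of Theorem \ref{ComplexityTheorem1} already gives an exponential bound via $|\bbalpha_\sigma|\leq 2^{|\varphi|}$, the quantitative improvement we want is essentially bookkeeping: wherever the original argument estimates a quantity by the total number of $1$-types, we should observe that only $1$-types actually \emph{realized} are ever used in the construction, so the estimate goes through with $|\bbalpha_{\mathfrak{A}}|$ in place of $|\bbalpha_\sigma|$.

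Concretely, I would start from a (possibly infinite) model $\mathfrak{A}\models\varphi$ and reproduce the construction of the small model $\mathfrak{A}'$ from \cite{DBLP:conf/mfcs/KieronskiK14}, whose universe is $C\cup E\cup F\cup G$ with $C$ built from a set $K$ of kings and a set $D$ collecting live parts of free witness structures. For $K$, I would note that kings realize royal $1$-types which, being realized, lie in $\bbalpha_{\mathfrak{A}}$; each royal type contributes at most $n-1$ kings, giving $|K|\leq (n-1)|\bbalpha_{\mathfrak{A}}|$. For $D$, only pairs $(\alpha,\varphi_i^{\exists})$ with $\alpha\in\bbalpha_{\mathfrak{A}}$ ever contribute a free live part, of size at most $n-1$, so $|D|\leq (n-1)m_{\exists}|\bbalpha_{\mathfrak{A}}|$. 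Then $C$ is built by attaching to each $a\in K\cup D$ a witness structure for each $\varphi_i^{\exists}$, each of size at most $n$, yielding $|C|\leq n\,|K\cup D|\,m_{\exists}$. The padding sets $E$, $F$, $G$ each contain $n+m_{\exists}$ fresh representatives of every non-royal realized $1$-type, so each has size at most $(n+m_{\exists})|\bbalpha_{\mathfrak{A}}|$.

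Routine arithmetic then combines these bounds:
\begin{align*}
|C| &\leq n\big((n-1)|\bbalpha_{\mathfrak{A}}| + (n-1)m_{\exists}|\bbalpha_{\mathfrak{A}}|\big)m_{\exists}
      \leq 2n^2 m_{\exists}^2 |\bbalpha_{\mathfrak{A}}|,\\
|E|+|F|+|G| &\leq 3(n+m_{\exists})|\bbalpha_{\mathfrak{A}}|,
\end{align*}
and summing gives $|A'|\leq 8n^2 m_{\exists}^2 |\bbalpha_{\mathfrak{A}}|$ (the second summand is absorbed using $n,m_{\exists}\geq 1$). Finally, I would observe that by construction $\bbalpha_{\mathfrak{A}'}\subseteq \bbalpha_{\mathfrak{A}}$, because no new $1$-type is introduced in $\mathfrak{A}'$, so setting $\mathfrak{M}:=\mathfrak{A}'$ yields the claim $|M|\leq 8n^2 m_{\exists}^2 |\bbalpha_{\mathfrak{M}}|$.

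The only genuine obstacle here is the verification step: one must actually walk through the original construction of $C$, $E$, $F$, $G$ and confirm that at no intermediate stage is a $1$-type from $\bbalpha_\sigma\setminus \bbalpha_{\mathfrak{A}}$ required. This is not hard but requires care, because the published bounds use $|\bbalpha_\sigma|$ as a convenient upper estimate, and replacing them with $|\bbalpha_{\mathfrak{A}}|$ is legitimate only after checking that every set constructed is indexed by, or populated by, elements of types already realized in the starting model $\mathfrak{A}$.
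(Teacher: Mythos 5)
Your proposal is correct and follows essentially the same route as the paper's proof: both reproduce the $C\cup E\cup F\cup G$ decomposition from \cite{DBLP:conf/mfcs/KieronskiK14}, replace $|\bbalpha_{\sigma}|$ by $|\bbalpha_{\mathfrak{A}}|$ in the bounds for $K$, $D$, $C$ and the padding sets, and conclude via the same arithmetic together with $\bbalpha_{\mathfrak{A}'}\subseteq\bbalpha_{\mathfrak{A}}$. The caveat you flag at the end (checking that no unrealized $1$-type is ever needed in the construction) is exactly the point the paper also relies on.
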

%

%
%
%
%

%
\begin{lemma}\label{ComplexityLemma1}
%
%
%
%
%
%
Let $\Gamma_{\varphi}\in \hat{\Gamma}_{\varphi}$ be some
tuple admissible for $\mathcal{K}\in\{\mathcal{O},
\mathcal{WO},\\ \mathcal{O}_{fin}\}$ such
that $\mathit{Ax}(\Gamma_{\varphi})$ is satisfiable.
%
%
%
%
%
%
Then $\mathit{Ax}(\Gamma_{\varphi})$ has a model $\mathfrak{A}$ 
whose size is bounded exponentially in $|\varphi|$. 
Moreover, even the length of the description of $\mathfrak{A}$ is
bounded exponentially in $|\varphi|$.

\begin{proof}
Let $\sigma$ be the vocabulary of $\varphi$.
Let $n$ be the width of $\varphi$ and $m_{\exists}$ the
number of existential conjuncts in $\varphi$.
Let $N$ be the index of $\Gamma_{\varphi}$
and $\sigma' := \sigma \cup \{U_{s}\mid 1\leq s \leq N\}
\cup \{K, D, P_{\bot}, P_{\top}\}$
the vocabulary of $\mathit{Ax}(\Gamma_{\varphi})$.
Let $C$ be the domain of the court structure in $\Gamma_{\varphi}$.
Assume $\mathfrak{M}\models\mathit{Ax}(\Gamma_{\varphi})$.
Recalling from Section \ref{admissibilitytuplessection}
that $N\leq 2|\varphi|^4|\bbalpha_{\sigma}|$
and thus clearly $N\leq 2|\varphi|^4\cdot 2^{|\varphi|}$,
%
%
%
%
%
we have

\smallskip

${|\sigma'|} = |\sigma| + |\{U_{s}\mid 1\leq s \leq N\}|
+ |\{K, D, P_{\bot}, P_{\top}\}| = {|\sigma| + N + 4}
\leq {|\varphi|+ 2|\varphi|^4\cdot 2^{|\varphi|} + 4}$

\smallskip

Thus $|\bbalpha_{\sigma'}|$ is bounded by $2^{{|\varphi|+ 2|\varphi|^4
\cdot 2^{|\varphi|} + 4}}$.
This is double exponential in $|\varphi|$.
However, the upper bound for $|\bbalpha_{\mathfrak{M}}|$
(i.e., the number of $1$-types over $\sigma'$ realized in $\mathfrak{M}$) is
exponentially bounded in $|\varphi|$ for the following reason.
Since the predicates $U_s$, where $s \in \{1,...\, , N\}$,
partition the domain $M$,
each element in $M$ satisfies exactly one
of the predicates $U_s$.
%
%
%
%
%
%
%
%
%
%
%
Therefore, letting $\sigma'' := \sigma'\setminus \{U_s\, |\, 1\leq s\leq N\}$,
we have $|\bblet{\alpha}_{\mathfrak{M}}|
\leq N|\bblet{\alpha}_{\sigma''}|$.
%
%
On the other hand, $|\bblet{\alpha}_{\sigma''}|
\leq 2^{|\sigma| + 4}
\leq 2^{|\varphi| + 4}$.
%
%
%
Combining these, we obtain that $|\bblet{\alpha}_{\mathfrak{M}}|
\leq  N\cdot 2^{|\varphi| + 4}$.
%
%
Recalling (from a few lines 
above) that $N\leq 2 |\varphi|^{4}\cdot 2^{|\varphi|}$, we
get $|\bblet{\alpha}_{\mathfrak{M}}|
\leq  2 |\varphi|^{4}\cdot 2^{|\varphi|}\cdot 2^{|\varphi| + 4}
= 2 |\varphi|^{4}\cdot 2^{2|\varphi| + 4}$.
This is exponential in $|\varphi|$.
As $\mathit{Ax}(\Gamma_{\varphi})$ is satisfiable,
it follows from Lemma \ref{ComplexityCorollary1}
that $\mathfrak{A}\models\mathit{Ax}(\Gamma_{\varphi})$ for
some $\sigma'$-structure $\mathfrak{A}$ whose size is
bounded by $\\ 8\hat{m}_{\exists}{\hat{n}}^2|\bbalpha_{\mathfrak{A}}|$,
where $\hat{m}_{\exists}$ is the number of
existential conjuncts in $\mathit{Ax}(\Gamma_{\varphi})$ and $\hat{n}$ is 
the width of $\mathit{Ax}(\Gamma_{\varphi})$.
On the other hand, by the result from the previous paragraph, we
have $|\bbalpha_{\mathfrak{A}}| \leq 2 |\varphi|^{4}
\cdot 2^{2|\varphi| + 4}$.
Therefore, to show that the domain of $\mathfrak{A}$ is 
bounded exponentially in $|\varphi|$, it
suffices to show that $\hat{m}_{\exists}$ and $\hat{n}$ are
exponentially bounded in $|\varphi|$.
This follows immediately by Lemma \ref{sentencelimitlemma}.
We then show that even the length of the description of $\mathfrak{A}$ is,
likewise, exponentially bounded in $|\varphi|$.
For describing models, we use the straightforward convention from
Chapter 6 of \cite{DBLP:books/sp/Libkin04}, according to which
the unique description of $\mathfrak{A}$ with some
ordering of $\sigma'$ is of the length
%
%
%
%
$|A|+1 +\sum_{i=1}^{|\sigma'|}\, |A|^{ar(R_i)}$
%
%
%
%
where $ar(R_i)$ is the arity of $R_i \in \sigma'$.
Since $|A|$ is exponential in $|\varphi|$
and $\mathit{ar}(R_i)\leq |\varphi|$, each term $|A|^{\mathit{ar}(R_i)}$ is
likewise exponentially bounded in $|\varphi|$.
Furthermore, at the beginning of the 
current proof we calculated that $|\sigma'|\leq
{|\varphi|+ 2|\varphi|^4\cdot 2^{|\varphi|} + 4}$.
Thus we conclude that the description of $\mathfrak{A}$
exponentially bounded in $|\varphi|$.
\end{proof}
\end{lemma}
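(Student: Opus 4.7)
The plan is to apply the refined bound from Lemma \ref{ComplexityCorollary1}, but to control the key parameter $|\bbalpha_{\mathfrak{M}}|$ (realized $1$-types) much more tightly than the naive $2^{|\sigma'|}$ bound would suggest. The direct approach of using Theorem \ref{ComplexityTheorem1} on $\mathit{Ax}(\Gamma_{\varphi})$ only gives a model of size exponential in $|\mathit{Ax}(\Gamma_{\varphi})|$, which by Lemma \ref{sentencelimitlemma} is already exponential in $|\varphi|$, hence \emph{doubly} exponential in $|\varphi|$. This is why the refined form of the small model property is needed.

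First I would fix notation: let $\sigma$ be the vocabulary of $\varphi$, let $\sigma' = \sigma \cup \{U_s \mid 1 \leq s \leq N\} \cup \{K,D,P_{\bot},P_{\top}\}$ be the vocabulary of $\mathit{Ax}(\Gamma_{\varphi})$, and let $\sigma'' = \sigma \cup \{K,D,P_{\bot},P_{\top}\}$. The index $N$ is at most $6|\varphi|^4|\bbalpha_\sigma| \leq 6|\varphi|^4 \cdot 2^{|\varphi|}$ (so exponential in $|\varphi|$), which means $|\sigma'|$ itself is only exponential in $|\varphi|$, but $|\bbalpha_{\sigma'}|$ could be \emph{doubly} exponential in $|\varphi|$. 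The central observation is that pseudo-ordering axiom \ref{POApartition} forces the predicates $U_s$ to partition the universe, so in any model each element satisfies exactly one $U_s$. Consequently the number of $1$-types over $\sigma'$ that can actually be \emph{realized} satisfies $|\bbalpha_{\mathfrak{M}}| \leq N \cdot |\bbalpha_{\sigma''}| \leq N \cdot 2^{|\sigma| + 4}$, and both factors are exponential in $|\varphi|$, giving an exponential bound on $|\bbalpha_{\mathfrak{M}}|$.

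Next I would invoke Lemma \ref{ComplexityCorollary1} applied to $\mathit{Ax}(\Gamma_{\varphi})$. Writing $\hat{m}_{\exists}$ and $\hat{n}$ for the number of existential conjuncts and the width of $\mathit{Ax}(\Gamma_{\varphi})$ respectively, the lemma supplies a model $\mathfrak{A}$ of $\mathit{Ax}(\Gamma_{\varphi})$ with $|A| \leq 8\hat{m}_{\exists}^2\hat{n}^2 |\bbalpha_{\mathfrak{A}}|$. By Lemma \ref{sentencelimitlemma} both $\hat{m}_{\exists}$ and $\hat{n}$ are exponentially bounded in $|\varphi|$, and by the previous paragraph $|\bbalpha_{\mathfrak{A}}|$ is as well, so multiplying these exponential bounds yields an exponential bound on $|A|$ in $|\varphi|$.

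For the description length, I would use the standard convention (Chapter 6 of \cite{DBLP:books/sp/Libkin04}) that a $\sigma'$-structure $\mathfrak{A}$ admits a description of length $|A| + 1 + \sum_{i=1}^{|\sigma'|} |A|^{\mathit{ar}(R_i)}$. Since every relation symbol in $\sigma'$ has arity at most $|\varphi|$ (the newly added symbols are unary, and the old ones come from $\varphi$), each term $|A|^{\mathit{ar}(R_i)}$ is exponential in $|\varphi|$; multiplying by the exponential bound on $|\sigma'|$ gives an exponential total. The main obstacle is really the first step, namely spotting that the partition axiom \ref{POApartition} is exactly what collapses the potentially doubly exponential $|\bbalpha_{\sigma'}|$ down to the exponential $|\bbalpha_{\mathfrak{M}}|$; once this is noticed, the rest is bookkeeping.
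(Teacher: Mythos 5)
Your proposal is correct and follows essentially the same route as the paper's own proof: the same key observation that the partition axiom (axiom \ref{POApartition}) collapses the potentially doubly exponential $|\bbalpha_{\sigma'}|$ to the exponential bound $|\bbalpha_{\mathfrak{M}}| \leq N\cdot|\bbalpha_{\sigma''}|$, followed by the same application of Lemma \ref{ComplexityCorollary1} and Lemma \ref{sentencelimitlemma}, and the same description-length bookkeeping via the convention of \cite{DBLP:books/sp/Libkin04}. (Incidentally, your bound $N\leq 6|\varphi|^4|\bbalpha_{\sigma}|$ matches the definition in Section \ref{admissibilitytuplessection} more faithfully than the $2|\varphi|^4|\bbalpha_{\sigma}|$ quoted in the paper's proof, though this does not affect the asymptotics.)
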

Once we have guessed the exponentially
bounded model $\mathfrak{B}$ at line 5 of Algorithm 1,
the remaining part of the algorithm is devoted for
checking that $\mathfrak{B}\models\mathit{Ax}(\Gamma_{\psi})$.
At lines 6-11 we scan each $b\in B$ and each existential
conjunct of $\mathit{Ax}(\Gamma_{\psi})$.
%
%
Then at lines 12-16 we check the universal conjuncts by
checking all tuples of length at most $n'$ in $B$, where $n'$ is
the width of $\mathit{Ax}(\Gamma_{\psi})$. Noting that $n'\leq n + 1$,
where $n$ is the width of $\psi$, the procedure at lines 5-16 can be carried out in
exponential time in $|\psi|$.
We have now proved the following theorem, which is a
restatement of Theorem \ref{nexptimethmmaintext}.
(Recall that the lower bound is
obtained because $\mathrm{FO}^2$ is \textsc{NExpTime}-complete for all the 
classes $\mathcal{K}\in\{\mathcal{O},\mathcal{WO},\mathcal{O}_{fin}\}$
\cite{DBLP:journals/jsyml/Otto01}.)
\begin{theorem}\label{nexptimethmappendix} \emph{(Restatement of Theorem
\ref{nexptimethmmaintext}):}
\newline Let $\mathcal{K}\in\{\mathcal{O}, \mathcal{WO},\mathcal{O}_{fin}\}$.
The satisfiability problem for $\mathrm{U}_1$ over $\mathcal{K}$
is \textsc{NExpTime}-complete.
%
%
%
%
%
%
%
%
%
%
%
%
%
%
%
%
%

%
%
%
%
%
%
\end{theorem}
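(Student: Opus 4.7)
The lower bound is immediate: by \cite{DBLP:journals/jsyml/Otto01}, satisfiability of $\mathrm{FO}^2$ with a linear order is already \textsc{NExpTime}-hard, and since $\mathrm{FO}^2$ embeds into $\mathrm{U}_1$ (via $\mathrm{FU}_1$), the same lower bound transfers to $\mathrm{U}_1$ over each of $\mathcal{O}$, $\mathcal{WO}$ and $\mathcal{O}_{fin}$. Thus the only real work is the matching \textsc{NExpTime} upper bound, which I would obtain by showing that the nondeterministic procedure outlined in Figure \ref{pseudocodefig} runs in time exponential in $|\psi'|$.

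My plan is to analyse Algorithm \ref{pseudocodefig} line by line, using Lemma \ref{DecidabilityLemma2} as the correctness backbone. First, the input $\psi'$ is converted in polynomial time to an equisatisfiable normal form sentence $\psi$ by Proposition \ref{PreliminariesNormalForm}. Next, an admissibility tuple $\Gamma_{\psi}\in\hat{\Gamma}_{\psi}$ is guessed; by Lemma \ref{th!MostImportantProofInYourLife} its binary description has length exponential in $|\psi|$, and the six admissibility conditions (Definition \ref{admidefn}) can be verified in time polynomial in $|\Gamma_{\psi}|$, i.e., exponential in $|\psi|$. From $\Gamma_{\psi}$ we then produce the sentence $\mathit{Ax}(\Gamma_{\psi})$ of pseudo-ordering axioms. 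By Lemma \ref{DecidabilityLemma2}, $\psi\in\mathit{sat}_{\mathcal{K}}(\mathrm{U}_1)$ iff $\mathit{Ax}(\Gamma_{\psi})$ has a model (in the general sense), so it remains to check satisfiability of $\mathit{Ax}(\Gamma_{\psi})$ in \textsc{NExpTime} in $|\psi|$.

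The core technical hurdle, and really the only delicate point, is that a naive appeal to the \textsc{NExpTime} upper bound for plain $\mathrm{U}_1$ from \cite{DBLP:conf/mfcs/KieronskiK14} would give only a \emph{double} exponential bound, because $|\mathit{Ax}(\Gamma_{\psi})|$ is itself exponential in $|\psi|$. To overcome this I would proceed in three steps. First, bound $|\mathit{Ax}(\Gamma_{\psi})|$ by a single exponential in $|\psi|$: this follows from the explicit shape of the 16 pseudo-ordering axioms together with the size bounds $|C|\leq 2|\psi|^4|\bbalpha_{\tau}|$ and $N\leq 6|\psi|^4|\bbalpha_{\tau}|$ coming from Sections \ref{courtsection} and \ref{intervalpartitionsection}, and the trivial bound $|\bbalpha_{\tau}|\leq 2^{|\psi|}$; the only axiom whose conjuncts look long (axiom \ref{POAcourt}) uses $k$-tables with $k\leq m\leq |\psi|$, which are still singly exponential. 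Second, refine the small-model analysis of \cite{DBLP:conf/mfcs/KieronskiK14}: by inspection of their proof, every satisfiable normal form $\mathrm{U}_1$-sentence $\chi$ has a model of size at most $8m_{\exists}^2n^2|\bbalpha_{\mathfrak{M}}|$, where $\bbalpha_{\mathfrak{M}}$ counts only types actually \emph{realised} (not all types over the vocabulary). Third, exploit the fact that in any model of $\mathit{Ax}(\Gamma_{\psi})$ the predicates $U_s$ partition the domain, so each realised $1$-type over $\sigma'=\tau\cup\{K,D,P_{\bot},P_{\top}\}\cup\{U_s\}_{s\leq N}$ is determined by a choice of one $U_s$ together with a $1$-type over $\sigma'\setminus\{U_s\}_{s\leq N}$; hence $|\bbalpha_{\mathfrak{M}}|\leq N\cdot 2^{|\psi|+4}$, which is singly exponential in $|\psi|$ even though $|\bbalpha_{\sigma'}|$ is not. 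Combining these gives a potential model $\mathfrak{B}$ of $\mathit{Ax}(\Gamma_{\psi})$ whose domain, and in fact whose full description (via the standard encoding from Chapter 6 of \cite{DBLP:books/sp/Libkin04}), is exponential in $|\psi|$.

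Having guessed such a $\mathfrak{B}$, the final step is to verify $\mathfrak{B}\models\mathit{Ax}(\Gamma_{\psi})$ deterministically in exponential time: for each $b\in B$ and each existential conjunct $\forall x\exists y_1\ldots\exists y_i\,\beta$ of $\mathit{Ax}(\Gamma_{\psi})$, guess witnesses $b_1',\ldots,b_l'\in B$ and check $\mathfrak{B}\models\beta(b,b_1',\ldots,b_l')$; for each universal conjunct $\forall x_1\ldots\forall x_{l'}\beta'$, iterate through all tuples over $B$ of length at most the width $n'\leq n+1$ of $\mathit{Ax}(\Gamma_{\psi})$ and check the matrix. Since $|B|$ is exponential in $|\psi|$ and $n',l'\leq |\psi|+1$, the total number of checks and the cost of each individual matrix check are both exponential in $|\psi|$, giving the desired \textsc{NExpTime} bound. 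The main obstacle, as noted, is the second step: carefully extracting from the proof in \cite{DBLP:conf/mfcs/KieronskiK14} that the bound depends on $|\bbalpha_{\mathfrak{M}}|$ rather than on $|\bbalpha_{\sigma'}|$, and then exploiting the partition property forced by axiom \ref{POApartition} to keep $|\bbalpha_{\mathfrak{M}}|$ singly exponential in $|\psi|$.
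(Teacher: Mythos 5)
Your proposal is correct and follows essentially the same route as the paper: the same reduction via Lemma \ref{DecidabilityLemma2}, the same single-exponential bound on $|\mathit{Ax}(\Gamma_{\psi})|$ (Lemma \ref{sentencelimitlemma}), the same refinement of the small-model bound from \cite{DBLP:conf/mfcs/KieronskiK14} to $8m_{\exists}^2n^2|\bbalpha_{\mathfrak{M}}|$ in terms of \emph{realised} $1$-types (Lemma \ref{ComplexityCorollary1}), and the same use of the partition by the $U_s$ predicates to keep $|\bbalpha_{\mathfrak{M}}|\leq N\cdot 2^{|\psi|+4}$ singly exponential (Lemma \ref{ComplexityLemma1}). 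No gaps; you have correctly identified the one delicate point (avoiding the double-exponential blow-up) and resolved it exactly as the paper does.
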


\section{Proof of Theorem  \ref{U1twosucc}} \label{appedixtiling}

Before giving the proof, we introduce some definitions and lemmas used
in the proof. 

A \textit{domino system} $\mathcal{D}$ is a structure $(D, H_{do}, V_{do})$, where
$D$ is a finite set (of dominoes) and $H_{do},V_{do} \subseteq D\times D$.
We say that a mapping $\tau: \mathbb{N}\times \mathbb{N} \rightarrow D$ 
is a $\mathcal{D}$-\textit{tiling} of $\mathbb{N} \times \mathbb{N}$,
if for every $i,j \in \mathbb{N}$, it holds that 
$(\tau(i,j),\tau(i+1,j)) \in H_{do}$ and
$(\tau(i,j),\tau(i,j+1)) \in V_{do}$.
The \textit{tiling problem} asks, given a domino system $\mathcal{D}$ as an input, whether 
there exists a $\mathcal{D}$-tiling of $\mathbb{N}\times \mathbb{N}$.
It is well known that the tiling problem  is undecidable.

Let $\mathfrak{G}_{\mathbb{N}} = (\mathbb{N \times N}, H, V)$ be the \emph{standard grid},  
where $H = \{ \big( (i,j), \\ (i+1,j) \big) \mid i,j \in \mathbb{N} \}$ 
and $V = \{ \big( (i,j), (i,j+1) \big) \mid i,j \in \mathbb{N} \}$ are binary relations. 

Let $\mathfrak{A} = (A,H,V)$ and $\mathfrak{B} = (B,H,V)$ be $\{H,V\}$-structures,
where $H$ and $V$ are binary relations. 
The structure $\mathfrak{A}$ is \textit{homomorphically embeddable} into $\mathfrak{B}$, 
if there is a homomorphism $h: A \rightarrow B$ 
defined in the usual  way.
\begin{definition} \label{gridlikenessdef}
A structure 
$\mathfrak{G} = (G,H,V)$ is called \textit{grid-like}, 
if there  exists a homomorphism 
from $\mathfrak{G}_{\mathbb{N}}$ to $\mathfrak{G}$, i.e., $\mathfrak{G}_{\mathbb{N}}$ 
is homomorphically embeddable into $\mathfrak{G}$. 
\end{definition}
Let $\mathfrak{G}$ be a $\{H,V\}$-structure with two binary relations $H$ and $V$.
We say that $H$ is \textit{complete over} $V$, 
if $\mathfrak{G}$ satisfies the formula $\forall xyzt (\,(\,Hxy \wedge Vxt \wedge Vyz\,) \rightarrow Htz\,)$. 

The following lemma is from \cite{DBLP:journals/jsyml/Otto01}. 
Note that $\mathrm{FO^2}$ is contained in $\mathrm{U}_{1}$. 
\begin{lemma}[\cite{DBLP:journals/jsyml/Otto01}]\label{gridlikelemma1}
Let $\mathfrak{G} = (G,H,V)$ be a structure satisfying 
the $\mathrm{FO^2}$-axiom $\forall x (\, \exists y  Hxy \wedge \exists y  Vxy\,)$.
If H is complete over V, then $\mathfrak{G}$ is grid-like. 
\end{lemma}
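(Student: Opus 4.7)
The plan is to construct a homomorphism $h \colon \mathbb{N}\times\mathbb{N} \to G$ explicitly by induction, using the existential axiom to produce successors and the completeness property to close up the squares of the grid. I would first choose an arbitrary basepoint $h(0,0) \in G$. Then I would build the bottom row by induction on $i$: given $h(i,0)$, use the axiom $\forall x\, \exists y\, Hxy$ to pick any $H$-successor of $h(i,0)$ in $\mathfrak{G}$ and declare it to be $h(i+1,0)$. Next, for each column index $i$, I would build the column upwards by induction on $j$: given $h(i,j)$, use $\forall x\, \exists y\, Vxy$ to pick any $V$-successor of $h(i,j)$ and declare it to be $h(i,j+1)$.

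After this inductive definition, I need to verify that $h$ is genuinely a homomorphism, that is, that for every $i,j \in \mathbb{N}$ we have $(h(i,j),h(i+1,j)) \in H^{\mathfrak{G}}$ and $(h(i,j),h(i,j+1)) \in V^{\mathfrak{G}}$. The preservation of $V$-edges is immediate from the construction, since $h(i,j+1)$ was chosen as a $V$-successor of $h(i,j)$ in each column. Preservation of $H$-edges is the only nontrivial point, and this is where the hypothesis that $H$ is complete over $V$ enters. I would prove the $H$-preservation statement \emph{for fixed $i$, by induction on $j$}. The base case $j=0$ holds by construction of the bottom row. For the inductive step, suppose $(h(i,j), h(i+1,j)) \in H^{\mathfrak{G}}$. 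By construction, $(h(i,j), h(i,j+1)) \in V^{\mathfrak{G}}$ and $(h(i+1,j), h(i+1,j+1)) \in V^{\mathfrak{G}}$. Applying the completeness axiom
\[
 \forall x y z t\bigl(\,(Hxy \wedge Vxt \wedge Vyz) \rightarrow Htz\,\bigr)
\]
with $x = h(i,j)$, $y = h(i+1,j)$, $t = h(i,j+1)$, $z = h(i+1,j+1)$ yields $(h(i,j+1), h(i+1,j+1)) \in H^{\mathfrak{G}}$, completing the induction.

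The only real obstacle I anticipate is conceptual rather than technical: one must be careful about the order of the inductive constructions, since the completeness axiom requires all four corners of a square to already exist in $\mathfrak{G}$ before one can deduce the top $H$-edge. Building first along the bottom row (horizontally) and then upwards column by column (vertically) ensures that at the point when we wish to conclude $H(h(i,j+1), h(i+1,j+1))$, the four relevant elements and the three relevant edges have all been fixed in earlier stages of the construction. Once the homomorphism $h$ has been produced, Definition~\ref{gridlikenessdef} is satisfied and $\mathfrak{G}$ is grid-like, which concludes the proof.
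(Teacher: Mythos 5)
Your proof is correct. The paper itself does not prove this lemma --- it is imported from Otto's article without proof --- and your argument (build the bottom row by $H$-successors, extend each column by $V$-successors, then propagate the horizontal edges upward by induction on $j$ using the completeness of $H$ over $V$) is exactly the standard argument behind the cited result, with the quantifier instantiation $x = h(i,j)$, $y = h(i+1,j)$, $t = h(i,j+1)$, $z = h(i+1,j+1)$ applied correctly.
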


Let $\mathcal{D}$ be a domino system, and 
let $(P_d)_{d \in D}$ be a set of unary relation symbols.
Assume that there is a $\mathcal{D}$-tiling of $\mathbb{N}\times \mathbb{N}$.
The correctness of the $\mathcal{D}$-tiling can be expressed by the $\mathrm{FO^2}$-sentence
$\varphi_{\mathcal{D}} := \forall x ( \bigvee_{d} P_{d}x \wedge \bigwedge_{d \neq d'} \neg( P_{d}x \wedge P_{d'}x )\, )
\wedge \forall x y (\, Hxy \rightarrow \\ \bigvee_{(d,d')\in H_{do}} (\, P_{d}x \wedge P_{d'}y\,) \,)
\wedge \forall x y (\, Vxy \rightarrow \bigvee_{(d,d')\in V_{do}} (\, P_{d}x \wedge P_{d'}y\,) \,)$.

\begin{lemma} \label{dominoreduction}
Let $\mathcal{D}$ be a domino system, and let $\mathcal{G}$ be a class 
of grid-like structures such that $\mathfrak{G}_{\mathbb{N}} \in \mathcal{G}$.
Then there exists a $\mathcal{D}$-\textit{tiling} of $\mathbb{N} \times \mathbb{N}$ iff 
there exists $\mathfrak{G} \in \mathcal{G}$ that can be expanded to $\mathfrak{G}' = (G,H,V, (P_d)_{d\in D})$
such that $\mathfrak{G}' \models \varphi_{ \mathcal{D} }$.
\begin{proof}
Assume first that there exists a $\mathcal{D}$-\textit{tiling} of $\mathbb{N} \times \mathbb{N}$.
Then, as $\mathfrak{G}_{\mathbb{N}} \in \mathcal{G}$,  we expand $\mathfrak{G}_{\mathbb{N}}$
to $\mathfrak{G}'_{\mathbb{N} } = (\mathbb{N}\times \mathbb{N}, H,V, (P_d)_{d\in D})$
in the obvious way, whence $\mathfrak{G}'_{\mathbb{N} } \models \varphi_{\mathcal{D}}$.

Assume then that there exists $\mathfrak{G} \in \mathcal{G}$ 
that can be expanded to $\mathfrak{G}' = (G,H,V, (P_d)_{d\in D})$
such that $\mathfrak{G}' \models \varphi_{ \mathcal{D} }$.
As $\mathfrak{G}$ is grid-like, it follows from Definition \ref{gridlikenessdef} that 
there is a homomorphism $h: \mathfrak{G}_{\mathbb{N}} \rightarrow \mathfrak{G}$.
We define $\tau: \mathbb{N}\times \mathbb{N} \rightarrow D$ such that $\tau(i,j) = d$, if $h(i,j) \in P_{d}$ for some $d \in D$.
Now the mapping $\tau$ is a $\mathcal{D}$-\textit{tiling} of $\mathbb{N} \times \mathbb{N}$.
\end{proof}
\end{lemma}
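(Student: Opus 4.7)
The plan is to prove the two directions of the equivalence separately, both of which are essentially unpacking of definitions once the right objects are named.

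For the forward direction, assume a $\mathcal{D}$-tiling $\tau : \mathbb{N} \times \mathbb{N} \to D$ exists. I would simply take $\mathfrak{G} := \mathfrak{G}_{\mathbb{N}}$, which lies in $\mathcal{G}$ by hypothesis, and expand it to $\mathfrak{G}'_{\mathbb{N}} = (\mathbb{N} \times \mathbb{N}, H, V, (P_d)_{d \in D})$ by setting $P_d^{\mathfrak{G}'_{\mathbb{N}}} := \tau^{-1}(d)$ for each $d \in D$. Since $\tau$ is a function from $\mathbb{N} \times \mathbb{N}$ to $D$, the sets $\tau^{-1}(d)$ partition the domain, so the first conjunct of $\varphi_{\mathcal{D}}$ is satisfied. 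The horizontal and vertical constraints on $\tau$ translate directly into the two remaining conjuncts, because in $\mathfrak{G}_{\mathbb{N}}$ the relations $H$ and $V$ hold exactly on the pairs $((i,j),(i+1,j))$ and $((i,j),(i,j+1))$, respectively.

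For the backward direction, suppose $\mathfrak{G} \in \mathcal{G}$ has an expansion $\mathfrak{G}' = (G, H, V, (P_d)_{d \in D})$ with $\mathfrak{G}' \models \varphi_{\mathcal{D}}$. By grid-likeness of $\mathfrak{G}$ (Definition \ref{gridlikenessdef}), fix a homomorphism $h : \mathfrak{G}_{\mathbb{N}} \to \mathfrak{G}$. Since the first conjunct of $\varphi_{\mathcal{D}}$ forces every element of $G$ to satisfy exactly one $P_d$, the assignment $\tau(i,j) := $ the unique $d$ with $h(i,j) \in P_d^{\mathfrak{G}'}$ is well-defined as a map $\mathbb{N} \times \mathbb{N} \to D$. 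It remains to verify the horizontal and vertical compatibility conditions. For horizontal: from the grid definition $((i,j),(i+1,j)) \in H^{\mathfrak{G}_{\mathbb{N}}}$, so by homomorphism $(h(i,j), h(i+1,j)) \in H^{\mathfrak{G}}$, and the second conjunct of $\varphi_{\mathcal{D}}$ then guarantees $(\tau(i,j), \tau(i+1,j)) \in H_{do}$. The vertical case is analogous using the third conjunct.

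There is no real obstacle here; the argument is essentially a transport along $h$ together with a use of the partition property built into $\varphi_{\mathcal{D}}$. The only point worth a line of care is confirming that $\tau$ as defined is single-valued, which is where the pairwise-disjointness clause $\bigwedge_{d \neq d'} \neg(P_d x \wedge P_{d'} x)$ in $\varphi_{\mathcal{D}}$ does its work. Once both directions are assembled, the equivalence follows immediately.
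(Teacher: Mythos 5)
Your proposal is correct and follows essentially the same route as the paper's proof: expand the standard grid by $P_d := \tau^{-1}(d)$ for one direction, and transport the tiling back along the homomorphism $h$ guaranteed by grid-likeness for the other. The only difference is that you spell out the verification of the conjuncts of $\varphi_{\mathcal{D}}$ and the single-valuedness of $\tau$, which the paper leaves implicit.
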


\begin{proof}[Proof of Theorem  \ref{U1twosucc}]
Let $\tau = \{H,V\}$.
Recall that the standard grid 
$\mathfrak{G}_{\mathbb{N}}$ is a $\tau$-structure.
Let $\tau' = \tau \cup \{<_1,<_2,N\}$,
where $<_1$ and $<_2$
are binary symbols and $N$ is a 4-ary symbol.
Let us first informally outline the proof.  
First the standard grid $\mathfrak{G}_{\mathbb{N}}$ is expanded
to $\tau'$-structure $\mathfrak{G'}_{\mathbb{N}}$.  
Expanding $\mathfrak{G}_{\mathbb{N}}$
to $\mathfrak{G'}_{\mathbb{N}}$ amounts to
describing how the new symbols $<_1$, $<_2$, and $N$ are 
interpreted in $\mathfrak{G'}_{\mathbb{N}}$.
A fragment of the intended structure can be seen in \fn \ref{picture1}.
Then we axiomatize some important properties of $\mathfrak{G'}_{\mathbb{N}}$
such that the structures that interpret $<_1$ and $<_2$ as linear orders and satisfy the axioms, resemble 
$\mathfrak{G'}_{\mathbb{N}}$ closely enough.
Now, let $\mathcal{G}$ be the class of $\tau$-reducts of  
$\tau'$-structures that interpret $<_1$ and $<_2$ as linear orders and satisfy the axioms. 
In particular, $\mathfrak{G}_{\mathbb{N}}$ is in $\mathcal{G}$.
We show that every structure in $\mathcal{G}$ satisfies 
the local criterion that $H$ is complete over $V$. 
It will then follow from Lemma \ref{gridlikelemma1} 
that every structure in $\mathcal{G}$ is grid-like. 
Then the undecidability of the general satisfiability problem for $\mathrm{U}_{1}[<_{1},<_{2}]$ 
follows from Lemma \ref{dominoreduction}.

We now go to the details of the proof.
We define the $\tau'$-expansion $\mathfrak{G'}_{\mathbb{N}}$  of
$\mathfrak{G}_{\mathbb{N}}$ as follows.
The linear order $<_1$ follows a lexicographical order 
such that for all $(i,j), (i',j') \in \mathbb{N}^2$,  
we have $(i,j) <_{1} (i',j')$ if and only if $j<j'$ or ($j=j'$ and $i<i'$). 
In the linear order $<_2$, the roles of $i$ and $j$ are swapped, 
i.e., for all $(i,j), (i',j') \in \mathbb{N}^2$,  
we have $(i,j) <_{2} (i',j')$ if and only if $i<i'$ or ($i=i'$ and $j<j'$). 

The symbol $N$ is defined as follows.
For all points $a,b,c,d$ in $\mathbb{N}^2$,
we have $Nabdc$ if and only if $Hab$, $Hcd$, $Vac$, and $Vbd$; see \fn \ref{picture1}. 

Next we define a few auxiliary formulae.
For $i\in\{1,2\}$, let $x \leq_{i} y := x = y \vee x <_{i} y$.
Define also $\sigma_{i}(x,y,z) := x
<_{i}y \wedge (z\leq_{i} x \vee y \leq_{i} z)$.
We are now ready to give the desired axioms defining a class of $\tau'$-structures.
Let $\eta$ be the conjunction of the following sentences.
\begin{description}
  \item[$\eta_{G}$]$ = \forall x (\, \exists y Hxy \wedge \exists y Vxy\, ).$

  \item[$\eta_{H}$]$ = \forall x y z \big( \,Hxy \rightarrow
  \sigma_1(x,y,z) \,\big).$
  Together with the previous axiom, this axiom forces $H$ to be a 
kind of an
"induced successor relation" of the linear order $<_1$. 
  It is worth noting that $H$ is subject to the uniformity condition of $\mathrm{U}_1$, i.e.,
  $H$ cannot be used freely in quantifier-free $U_1[<_1,<_2]$-formulae,
but the order symbols $<_1,<_2$ can.
 
  \item[$\eta_{V}$]$ = \forall x y z \big(\, Vxy \rightarrow
  \sigma_2(x,y,z) \,\big).$ 
  This is analogous to $\eta_{H}$.

  \item[$\eta_{N\exists}$] $ = \forall x \exists y z t (\, Nxyzt \,).$
  This axiom states that each point is a first coordinate in some 4-tuple in $N$. We call the
 4-tuples in $N$ \emph{quasi-squares}.

  \item[$\eta_{N\forall}$]$ = \forall x y z t u 
  \big( \,Nxyzt   \rightarrow (\sigma_1(x,y,u) \wedge \sigma_2(x,t,u) 
  \wedge \sigma_2(y,z,u) \wedge \sigma_1(t,z,u )) \,\big).$
  The points of the quasi-squares are connected via 
  the induced successors of $<_1$ and $<_2$; 
  see the green curves representing tuples in $N$, Figure \ref{picture1}.
\end{description}

Thus we have $\eta := \eta_{G} \wedge \eta_{H} \wedge \eta_{V} \wedge 
\eta_{N_\exists} \wedge \eta_{N_\forall}$. 
It is readily checked that the expansion $\mathfrak{G'}_\mathbb{N}$ of 
the standard grid $\mathfrak{G}_\mathbb{N}$ satisfies the sentence $\eta$. 
Let $\mathcal{G} = \{\mathfrak{G'}\upharpoonright \tau \mid 
\mathfrak{G'} \text{ is } \tau' \text{-model s.t. $<_{1}^\mathfrak{G'}$ and }$ 
\\ $<_{2}^\mathfrak{G'} \text{ are linear orders and }  
\mathfrak{G'} \models \eta \}$.
Next we need to show that every structure $\mathfrak{G} 
\in \mathcal{G}$ is grid-like.
This can be done by applying Lemma \ref{gridlikelemma1}:
as every structure $\mathfrak{G} \in \mathcal{G}$ satisfies $\eta_{G}$, 
it suffices to show that for every structure $\mathfrak{G} \in \mathcal{G}$, $H$ is complete over $V$.
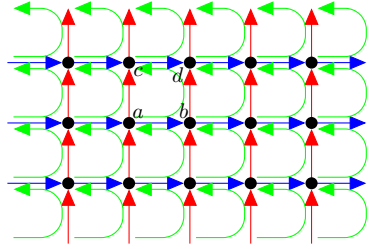
\begin{figure}[h!] 
\centering
\begin{tikzpicture}[scale=0.8, every node/.style={scale=0.8}] 

\foreach \i in {0,1,2} {
  \foreach \j in {-1,0,1,2,3,4} {
  \draw[triangle 45-,blue] (\j+0.9,\i)--(\j,\i);
  }
}

\foreach \i in {-1,0,1,2} {
  \foreach \j in {0,1,2,3,4} {
  \draw[-triangle 45,red] (\j,\i)--(\j,\i+0.9);
  }
}

\foreach \i in {2,1,0} {
  \foreach \j in {0,1,2,3,4} {
    \draw (\j,\i) node [circle, fill, inner sep=2pt] {};
  } 
}

\foreach \i in {2,1,0,-1} {
  \foreach \j in {-1,0,1,2,3,4} {
    \draw[triangle 45-,green, rounded corners=8pt] 
     (\j+0.1,\i+0.9) -- (\j+0.9,\i+0.9) -- (\j+0.9,\i+0.1) -- (\j+0.1,\i+0.1);
  }
}

\node at (1.15,1.15) {$a$};
\node at (1.9,1.2) {$b$};
\node at (1.15,1.85) {$c$};
\node at (1.8,1.8) {$d$};

\end{tikzpicture}
\caption{A finite fragment of the intended structure. 
The blue arrows represent the H-relations 
and the red ones the V-relations. 
The green curves represent the $N$-relations,
e.g. $Nabdc$.}
\label{picture1}
\end{figure}

To show that $H$ is complete over $V$ in every structure in $\mathcal{G}$, 
let $\mathfrak{G'}$ be a $\tau'$-structure interpreting 
$<_1$ and $<_2$ as linear orders and satisfying $\eta$. 
For convenience, for $i \in \{1,2\}$, let $\beta_{i}(x,y) := \forall z\, (\sigma_{i}(x,y,z))$.
%
%

Let $a \in G'$. 
From $\eta_G$, we get points $b,c,d \in G'$ 
such that $Hab \wedge Vac \wedge Vbd$.
As  $Hab \wedge Vac \wedge Vbd$, 
we conclude that $\beta_{1}(a,b) \wedge \beta_{2}(a,c) \wedge \beta_{2}(b,d)$ 
from $\eta_H$ and $\eta_V$.
From $\eta_{N\exists}$, we get $Nab'd'c'$ for some $b',c',d' \in G'$. 
As $Nab'd'c'$, we conclude that
$\beta_{1}(a,b') \wedge \beta_{2}(a,c') \wedge  \beta_{2}(b',d') \wedge \beta_{1}(c',d')$
from $\eta_{N\forall}$. The following claim is clear.

\textbf{Claim.} If $\beta_{1}(a,b) \wedge \beta_{1}(a,b')$, then $b = b'$.

As $\beta_{1}(a,b) \wedge \beta_{1}(a,b')$,
it follows from the claim that $b=b'$. 
We then conclude similarly that $c=c'$ and $d=d'$ (recalling that $b=b'$). 
From $\eta_G$, we get a point $d'' \in G'$ such that $Hcd''$
and then conclude that $\beta_{1}(c,d'')$ from $\eta_H$.
Furthermore, as $\beta_{1}(c',d')$, $c=c'$ and $d=d'$, we
have $\beta_{1}(c,d) \wedge \beta_{1}(c,d'')$.
Now, analogously to the claim, we have $d=d''$.
Therefore, as $Hcd''$, we have $Hcd$.

Let $\mathfrak{G} := {\mathfrak{G'}\upharpoonright \tau}$.
Thus for $\mathfrak{G} \in \mathcal{G}$,
it holds that $H$ is complete over $V$.
Now it follows from Lemma \ref{gridlikelemma1} 
that $\mathfrak{G}$ is grid-like.

As $\mathfrak{G}'_{\mathbb{N}} \models \eta$, 
the standard grid $\mathfrak{G}_{\mathbb{N}}$ is also in $\mathcal{G}$.
It now follows from Lemma \ref{dominoreduction} that 
the (general) satisfiability problem 
for $\mathrm{U}_1[<_1,<_2]$ over structures with linear orders $<_1$
and $<_2$ is undecidable. 
\end{proof}

\end{document}